\documentclass[12pt, reqno]{amsart}
\usepackage{amsmath,amssymb}
\usepackage{enumitem}
\usepackage{graphicx}
\usepackage{caption}
\usepackage{amsmath,stackengine}

\usepackage{mathrsfs}
\usepackage{mathtools}          
\usepackage{esint}  
\usepackage{amssymb}
\usepackage{xcolor}
\usepackage{dsfont}

\usepackage{enumitem}

\DeclareMathOperator*{\argmin}{arg\,min}

\DeclareMathOperator*{\esssup}{ess\,sup}
\DeclareMathOperator*{\essinf}{ess\,inf}

\usepackage{hyperref}

%
%

\DeclareGraphicsExtensions{.pdf,.png,.jpg}

\def\Xint#1{\mathchoice
{\XXint\displaystyle\textstyle{#1}}%
{\XXint\textstyle\scriptstyle{#1}}%
{\XXint\scriptstyle\scriptscriptstyle{#1}}%
{\XXint\scriptscriptstyle\scriptscriptstyle{#1}}%
\!\int}
\def\XXint#1#2#3{{\setbox0=\hbox{$#1{#2#3}{\int}$ }
\vcenter{\hbox{$#2#3$ }}\kern-.6\wd0}}

\def\dashint{\Xint-}

\newtheorem{theorem}{Theorem}[section]
\newtheorem{proposition}[theorem]{Proposition}  

\newtheorem{lemma}[theorem]{Lemma}

\theoremstyle{definition}
\newtheorem{assume}[theorem]{Assumption}
\newtheorem{definition}[theorem]{Definition}

\usepackage{color}

\theoremstyle{example}
\theoremstyle{remark}
\newtheorem{remark}[theorem]{Remark}
\numberwithin{equation}{section}



\newcommand{\R}{{ \mathbb{R}  }}

\newcommand{\N}{{ \mathbb{N}  }}
\newcommand{\p}{{ \partial  }}

\newcommand{\hrho}{{ \hat{\rho}  }}

\newcommand{\Om}{{ \Omega  }}
\newcommand{\OM}{\overline{\Omega}}
\newcommand{\nn}{{ \nonumber }}

\newcommand{\Del}{{ \Delta  }}

\newcommand{\lam}{{ \lambda  }}
\newcommand{\Lam}{{ \Lambda  }}
\newcommand{\cA}{{\mathcal{A}}}

\newcommand{\cE}{{\mathcal{E}}}

\newcommand{\cL}{{\mathcal{L}}}
\newcommand{\cS}{{\mathcal{S}}}

\newcommand{\del}{\delta}
\newcommand{\gam}{\gamma}
\newcommand{\Gam}{\Gamma}
\newcommand{\vphi}{\varphi}
\newcommand{\vrho}{\varrho}
\newcommand{\cM}{\mathcal{M}}
\newcommand{\si}{\sigma}
\newcommand{\grad}{\nabla}

\newcommand{\T}{\tau}
\newcommand{\Tk}{\tau_{k}}
\newcommand{\cK}{\mathcal{K}}

\newcommand{\ra}{\rightarrow}	
\newcommand{\da}{\downarrow}	
\newcommand{\FR}{\textrm{FR}}
\newcommand{\WFR}{\textrm{WFR}}

\newcommand{\half}{ {n+1/2}^\T}
\newcommand{\one}{ {n+1}^\T }

\newcommand{\placek}{ {n}^{\Tk}}
\newcommand{\halfk}{ {n+1/2}^{\Tk}}
\newcommand{\onek}{ {n+1}^{\Tk} }

\newcommand{\ed}{ {\ep,\del} }

\newcommand{\al}{\alpha}
\newcommand{\bt}{\beta}
\newcommand{\ep}{\varepsilon}

\newcommand{\wra}{\rightharpoonup}
\newcommand{\wsra}{\overset{\ast}{\rightharpoonup}}

\def\dashint{\Xint-}

\def\BigRoman{\uppercase\expandafter{\romannumeral\number\count
255 }}
\def\Romannumeral{\afterassignment\BigRoman\count255=}

\newenvironment{thm1.5}{{\par\noindent\bf
           Proof of Theorem \ref{weak-1}. }}
           {\hfill\fbox{}\par\vspace{.2cm}}

\newenvironment{thm1.6}{{\par\noindent\bf
           Proof of Theorem \ref{weak-2}. }}
           {\hfill\fbox{}\par\vspace{.2cm}}

\newenvironment{thm1.7}{{\par\noindent\bf
           Proof of Theorem \ref{weak-4}. }}
           {\hfill\fbox{}\par\vspace{.2cm}}

\newenvironment{thm1.8}{{\par\noindent\bf
           Proof of Theorem \ref{bdd weak-1}. }}
           {\hfill\fbox{}\par\vspace{.2cm}}

\newenvironment{thm1.9}{{\par\noindent\bf
           Proof of Theorem \ref{bdd weak-2}. }}
           {\hfill\fbox{}\par\vspace{.2cm}}

\newenvironment{thm1.10}{{\par\noindent\bf
           Proof of Theorem \ref{bdd weak-4}. }}
           {\hfill\fbox{}\par\vspace{.2cm}}


\usepackage[scale=0.75,hmarginratio=1:1,vmarginratio=2:3,a4paper]{geometry}

\setlength{\textwidth}{170mm} \setlength{\textheight}{250mm}
\parindent=3mm
\topmargin -50pt \oddsidemargin =-10pt \evensidemargin=-10pt

\textwidth465pt\oddsidemargin-14pt\evensidemargin\oddsidemargin
\topmargin-14pt\textheight683pt

\setlength\parindent{17pt}   

\author[Kyungkeun Kang, Hwa Kil Kim, Geuntaek Seo]{Kyungkeun Kang, Hwa Kil Kim, Geuntaek Seo}
\address{Kyungkeun Kang: Department of Mathematics \newline Yonsei University, Seoul 03722, Republic of Korea}
\email{kkang@yonsei.ac.kr}

\address{Hwa Kil Kim: Department of Mathematics Education \newline Hannam University, Daejeon 34430, Republic of Korea}
\email{hwakil@hnu.kr}

\address{Geuntaek Seo:  Department of Mathematics\newline POSTECH, Pohang 37673,  Republic of Korea}
\email{gtseo@postech.ac.kr}

\begin{document}
\captionsetup[figure]{labelfont={rm},labelformat={default},labelsep=period,name={Fig.}}

\title[Keller-Segel models with logistic source]{Bounded weak solutions for Keller-Segel equations with generalized diffusion and logistic source via an unbalanced optimal transport splitting scheme}

\keywords{Wasserstein-Fisher-Rao metric, Hellinger-Kantorovich distance, Unbalanced optimal transport, JKO scheme, Keller-Segel equation, Logistic source}
\begin{abstract} 
We consider a parabolic-elliptic type of Keller-Segel equations with generalized diffusion and logistic source under homogeneous Neumann-Neumann boundary conditions.
We construct bounded weak solutions globally in time in an unbalanced optimal transport framework, provided that the magnitude of the chemotactic sensitivity can be restricted depending on parameters.
In the case of subquadratic degradation of the logistic source, we quantify the chemotactic sensitivity, in particular, in terms of the power of degradation and the pointwise bound of the initial density.
\end{abstract}

\maketitle
\section{Introduction}
\subsection{Keller-Segel models \& gradient flows approach}
In this paper, we study the following Keller-Segel equations with generalized diffusion and logistic source:
\begin{equation}\label{KSL}
  \begin{cases}
          \p_t \rho= \grad \cdot( \rho \grad U'(\rho) - \chi \rho \grad c) -  \rho F'(\rho)  \\
\hfil      -\Delta c + \Lam c = \rho      
  \end{cases}
\end{equation}
The equation \eqref{KSL} is set in $\Om \times (0,T)$, $T>0$,  where $\Om$ is a bounded domain in $\R^d$. Here, $\rho$ and $c$ denote the density of bacteria and the concentration of a chemoattractant substance, respectively.
The constant $\chi> 0$ represents the sensitivity of the bacteria to the chemoattractant, and $\Lam>0$ denotes the degradation rate of the chemoattractant. The homogeneous Neumann-Neumann boundary conditions are imposed, and the initial datum $\rho_0 \in L^1(\Om)$ is given, i.e.,
\begin{equation}\label{Homo_Neumann}
\rho \grad U'(\rho) \cdot \nu= \grad c \cdot \nu=0, \qquad \rho(x,0)=\rho_0(x).
\end{equation} 
We assume that $U:[0,\infty) \ra \R$ and $F:[0,\infty) \ra \R$ in \eqref{KSL} are convex functions which will be specified later.
It is taken into account that the logistic term $-\rho F'(\rho)$  slows the rate of proliferation when the density of organisms is large.

We note that the typical cases of $U$ and $F'$ are
\begin{equation}\label{typical_UF}
U(\rho)=\rho \log \rho, \qquad  F'(\rho)=  \bt \rho^{r-1} -\al,
\end{equation}
where $\al\geq0,~ \bt>0$, and $r>1$.
In such a case, the Keller-Segel system \eqref{KSL} becomes
\begin{equation}\label{classical_KSL}
  \begin{cases}
          \p_t \rho= \Delta \rho - \chi \grad \cdot(\rho \grad c) +\al \rho - \bt \rho^r  \\
\hfil      -\Delta c + \Lam c = \rho  \\
      \end{cases}
\end{equation}
which has been extensively studied (see e.g. \cite{TW07, W18, WMZ14,  Z15,  SS17, CZ14, W14}).
In recent decades, one of the central issues for these equations (including equations without logistic source terms) has been whether the solutions of \eqref{classical_KSL} are globally bounded or blow up in a finite time. Some notable results are summarized below:

$\bullet$ It was proved in \cite{TW07}  that in the case $r > 2$, a unique classical solution globally exists.  On the other hand, when $r=2$, it was also shown in \cite{TW07} that if $\bt>  \frac{\chi(d-2)}{d}$ (the limiting case $\bt=  \frac{\chi(d-2)}{d}$ was resolved in \cite{KS16}), then the same conclusion holds as in the case $r>2$ (see also \cite{WMZ14, Z15}).

$\bullet$ 
It was shown in \cite{Wink08} that if $2- \frac{1}{d}<r<2$,  very weak solutions exist globally in time 
when the logistic source is in principle of the form $\al-\bt \rho^r$.
Furthermore, it was also proved that a global bounded very weak solution exists if $||\rho_0||_{L^\gamma}$ is sufficiently small for $\gam > \max(1, d/2).$

$\bullet$
It was shown in \cite{W18} that for $\chi = \Lam= 1$, $d \geq3$, and $\Om= B_1(0)$, there exists $\kappa= \kappa(d)<2$ such that
if  $r \in (1, \kappa)$, then
blow-up may occur in a finite time.

$\bullet$
The paper \cite{CS18} establishes an $L^\infty$ estimate for the discrete-in-time solutions of the JKO scheme for the Keller-Segel system ($F=0$), which in turn guarantees the short-time existence of a continuous weak solution (see also \cite{GLM19} for a similar analysis on reaction-advection-diffusion equations using splitting JKO schemes).

\smallskip
The main objective of this paper is to construct global-in-time bounded weak solutions of 
\eqref{KSL}-\eqref{Homo_Neumann} by employing 
gradient flow structures, and to quantify $\chi$ in terms of given parameters and the size of $||\rho_0||_{L^\infty}$ (see Theorem \ref{main}).
To this end, we decompose the governing equation into a transport part (divergence term) and a reaction part (source term). We then analyze the gradient flow structure of each part and employ the JKO splitting scheme to establish a uniform $L^\infty$ estimate for the solutions (see \eqref{CS_estimate} and \eqref{essential_key_estimate}).

To introduce gradient flow structures, we define energy functionals associated with gradient flows, starting with the assumptions on $U$ and $F$.

\begin{assume}\label{assumptionsUF}
The function $U$ satisfies
\begin{enumerate}[label=(\roman*)]
\item[(H1)]$U \in C^2(0,\infty) \cap C[0,\infty)$.

\item[(H2)] $U'' > 0$ on $(0,\infty)$.

\item[(H3)] 
$\lim_{s \ra 0+} sU'(s)$ exists.
\end{enumerate}

The function $F$ satisfies
\begin{enumerate}
\item[(A1)] $F \in C^1[0,\infty) \cap C^2(0,\infty)$, and $F''>0$ on $(0,\infty)$.

\item[(A2)] There exist $\al \geq0,~ \bt>0$, and $r>1$ such that $F'(s) \geq \bt s^{r-1} - \al$.

\item[(A3)] 
If $u \in L^1(\Om)$ and $F(u) \in L^1(\Om)$, then $uF'(au) \in L^1(\Om)$ for any $a>0$. 

\end{enumerate}

\end{assume}

\begin{remark}
It is worth noting that the superlinear growth of $U$ is not necessary at infinity, since all functions in the admissible set are uniformly bounded in JKO scheme (see \eqref{argmin}).

\end{remark}

We denote by $\cM^+(\Om)$ the set of nonnegative Radon measures on $\Omega$.
Suppose that  $U$ and  $F$ fulfill Assumption \ref{assumptionsUF}. 
Given $\rho \ll \cL^d|_\Om$, let $c=c[\rho]$ be the solution of 
\begin{equation}\label{elliptic}
  \begin{cases}
-\Delta c + \Lam c = \rho  \quad   & \text{in $\Om$}, \\
\hfil   \frac{\p c}{\p \nu}=0 & \text{on $\p \Om$.} 
  \end{cases}
\end{equation}

\noindent
We denote
\begin{equation}\label{K_U}
\mathscr{K}_U:=\Big\{ \rho \in \cM^+(\Om): \rho \ll \cL^d|_{\Om} \quad \textrm{and} \quad \int_\Om U(\rho) dx <\infty \Big\}
\end{equation}
and define the energy functional $\cE_1$ on $\cM^+(\Om)$ as follows:
\begin{equation}\label{energy}
  \cE_{1}(\rho):=
  \begin{cases}
                                   \displaystyle{\int_{\Om}} U \big( \rho(x) \big)  dx - \frac{\chi}{2} \int_\Om
                                    \Lam \big|c(x) \big|^2  + |\grad c(x)|^2 dx& \text{if $\rho \in  \mathscr{K}_U \cap L^\infty(\Om)$, } \\
\hfil                                   \infty & \text{else.} 
  \end{cases}
\end{equation}

With the set $\mathscr{K}_F$, similarly defined in \eqref{K_U}, we define
\begin{equation}\label{energy2}
  \cE_2(\rho):=
  \begin{cases}
                                   \displaystyle{\int_{\Om}} F(\rho(x)) dx  & \text{if $\rho \in \mathscr{K}_F$}, \\
                              \hfil     \infty & \text{else.} 
  \end{cases}
\end{equation}

\begin{remark}
In addition to \eqref{typical_UF},
let us comment on the typical types of $U$ and $F$.
For example, if we set $U(s)= \frac{1}{m-1}s^m$, $m>0$, $m\neq1$, then the term $\grad \cdot (\rho \grad U'(\rho))$ in \eqref{KSL} becomes the porous media diffusion, i.e., $\Delta \rho^m$. Furthermore, with a suitable $U$, $\grad \cdot (\rho \grad U'(\rho))$ can represent a linear diffusion (e.g. $U(s)=s \log s$).
From (A2), we note that the logistic source term $-\rho F'(\rho)$ does not exceed $\al \rho - \bt \rho^r$ (if  $F(s):= \bt s^r /r - \al s$, then $-\rho F'(\rho)$ coincides with the logistic term in \eqref{classical_KSL}, i.e., $\al \rho -\bt \rho^r$).
\end{remark}

Let us mention our approaches. 
We analyze the solution of \eqref{KSL} by splitting the equation and by finding two gradient flow structures in the equation.
For convenience, we write $W_2$ and $\FR$ as the 2-Wasserstein metric and Fisher-Rao metric, respectively (the precise definition of these metrics will be specified in Section \ref{section_pre}).
In short, our strategy is to regard \eqref{KSL} formally as
$$
\p_t \rho = -\textrm{grad}_{W_2} \cE_1(\rho) -\textrm{grad}_{\FR} \cE_2(\rho).
$$
where $\cE_1$ and $\cE_2$ are defined in \eqref{energy} and \eqref{energy2} respectively.
The main point is to use alternately gradient flow structures of $\cE_1$ and $\cE_2$ to construct a sequence of approximated solutions in $(\cM^+(\Om), \WFR)$, where $\WFR$ denotes the \textit{Wasserstein-Fisher-Rao} metric (also called \textit{Hellinger-Kantorovich} distance,
see Definition \ref{WFR_distance}). This will lead to the global existence of bounded weak solutions under the suitable restriction on $\chi$ (see \eqref{chi_star}).

To be more precise, firstly,  
$W_2$ gradient flows of $\cE_{1}$, i.e., $\p_t  \rho = -\textrm{grad}_{W_2} \cE_{1}(\rho)$, read (see e.g. \cite{Blanchet13})
\begin{equation}\label{GF1}
\p_t \rho = \grad \cdot( \rho \grad U'(\rho)) - \chi \grad \cdot(\rho \grad c[\rho]).
\end{equation}
On the other hand, similarly, $\FR$ gradient flows of $\cE_2$, i.e., $\p_t  \rho = -\textrm{grad}_{\FR}\cE_2(\rho)$ read (see e.g. \cite{GM17, GLM19})
\begin{equation}\label{GF2}
\p_t \rho= -\rho F'(\rho).
\end{equation}

The results of \cite{CS18} give $L^\infty$ estimates for densities locally in time by using the JKO scheme (see \cite{JKO98}) to exploit the gradient flow structure of \eqref{GF1}. 
In addition, the authors \cite{GLM19} obtained the existence of weak solutions for the types of reaction-advection-diffusion equations
$$
\p_t \rho = \grad \cdot \big( \rho \grad(F_1'(\rho) + V_1) \big) - \rho \big(F_2'(\rho) + V_2 \big)
$$
with some specific $F_1$ and $F_2$ and with $V_1 \in W^{1, \infty}(\Om)$ and $V_2 \in L^\infty(\Om)$,
by means of splitting JKO schemes for the Wasserstein metric and for the Fisher-Rao metric (compare to \cite{GM17}).

Motivated by the approach in \cite{GLM19}, we construct minimizers of Wasserstein/Fisher-Rao JKO schemes by splitting \eqref{KSL} into two parts given in \eqref{GF1} and \eqref{GF2}.

The main difference compared to \cite{GLM19} is that the chemotactic advection term in \eqref{KSL}, $-\grad \cdot(\chi \rho \grad c[\rho])$,  is of nonlinear structure, which causes major obstacles and makes our problem more difficult.

In our case, since our main interest is to establish global existence of bounded weak solutions, it seems necessary to obtain uniform control for $L^\infty$ norms of minimizers obtained by each splitting scheme, which is, however, not necessary in \cite{GLM19} because of the linear structure of an advection term, $\nabla \cdot (\rho \nabla V_1)$, and $V_1\in W^{1,\infty}$. 
Therefore, the novelty of our paper lies in the construction of weak solutions with uniform bounds via the Wasserstein-Fisher-Rao metric under the size restriction of $\chi$.

To be more precise,  we use the result of \cite{CS18} to obtain the $L^\infty$ estimate of the minimizers given by the Wasserstein JKO scheme corresponding to \eqref{GF1}, and then control $L^\infty$ norm of the minimizers caused by the Fisher-Rao JKO step relevant to \eqref{GF2}.
We note that $L^\infty$ norm of minimizers obtained from the Wasserstein JKO scheme in \cite{CS18} generally explodes in a finite time.
With the aid of the damping effect of the logistic source, the size of the mass and the $L^\infty$ norm of minimizers can be uniformly controlled by going through the Fisher-Rao JKO scheme, provided that $\chi$ is less than a threshold depending on other parameters.
As a consequence of the uniform boundedness, we obtain the existence of bounded weak solutions of \eqref{KSL}.

\subsection{Main results}
Throughout this paper, we write
\begin{equation}\label{chi_star}
\chi_*
:=
\begin{cases}
   \infty & \text{if $r>2$,} \\
    \bt & \text{if $r=2$,}\\
    ||\rho_0||_{L^\infty}^{-1} (\bt ||\rho_0||_{L^\infty}^{r-1}  - \al)  & \text{if $1<r<2$ \quad and \quad $||\rho_0||_{L^\infty} > \big( \frac{\al}{\bt(2-r)} \big)^{\frac{1}{r-1}} $,} \\
    
    \al^{\frac{2-r}{1-r}} \bt^{\frac{1}{r-1}}(2-r)^{\frac{2-r}{r-1}} (r-1)  & \text{if $1<r<2$ \quad and \quad $||\rho_0||_{L^\infty} \leq \big( \frac{\al}{\bt(2-r)} \big)^{\frac{1}{r-1}}   $}.
     \end{cases}
    \end{equation}

Next, we introduce the notion of weak solutions of \eqref{KSL}.
\begin{definition}\label{def_weak_solution}
Let $\rho_0 \in L^1(\Om)$. 
A pair $(\rho, c)$ is called a weak solution of \eqref{KSL}-\eqref{Homo_Neumann} if  for all $T>0$
\begin{enumerate}[label=(\roman*)]
\item
$\rho, ~\rho \grad U'(\rho), ~ \rho \grad c[\rho]$, and $\rho F'(\rho)$ belong to $L^1 \big( \Om \times (0,T) \big)$,
where $c[\rho]$ solves \eqref{elliptic} in the sense of distributions.

\item
for all $\vphi \in C_c^\infty( \OM  \times [0,T))$
\begin{equation}\label{real_weak_solution}
\int_0^T \int_\Om  \Big( \rho \p_t \vphi - \rho \grad U'(\rho) \cdot \grad \vphi  +\chi \rho \grad c[\rho] \cdot \grad \vphi -\rho F'(\rho)\vphi \Big) dxdt 
= -\int_\Om \vphi(x,0) \rho_0  dx.
\end{equation}
\noindent

\end{enumerate}

\end{definition}

We are now ready to state our main theorem:
\begin{theorem}\label{main}
Suppose that $\Om$ is a strictly convex bounded domain in $\R^d$ with smooth boundary and assume that $U$ and $F$ are functions satisfying Assumption \ref{assumptionsUF}.
Let $\rho_0 \in L^\infty(\Om) \cap \cM^+(\Om)$. If $0< \chi < \chi_*$, 
there exists a weak solution $(\rho,c[\rho])$ of \eqref{KSL}-\eqref{Homo_Neumann}.
In particular, for all $T>0$, the curve $\rho :[0,T] \ra \cM^+(\Om)$ is $\frac{1}{2}$-H\"{o}lder continuous with respect to \emph{\WFR}-metric, and $\rho$ satisfies
\begin{equation}\label{The_estimate}
\rho \grad U'(\rho) \in L^2 (\Om \times (0,T))
\qquad and \qquad
||\rho||_{L^\infty(\Om \times [0,T])} \leq C,
\end{equation}
where $C>0$ is a constant independent of $T$.
\end{theorem}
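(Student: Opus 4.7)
The plan is to realise the solution as a limit of a splitting JKO scheme alternating a Wasserstein minimisation for $\cE_1$ with a Fisher--Rao minimisation for $\cE_2$, in the spirit of \cite{GLM19}. Fix a time step $\T>0$, set $\rho^0_\T := \rho_0$, and iteratively define
\begin{equation*}
\rho^{n+1/2}_\T \in \argmin_{\rho \in \cM^+(\Om)} \Big\{ \tfrac{1}{2\T} W_2^2(\rho, \rho^n_\T) + \cE_1(\rho) \Big\},
\quad
\rho^{n+1}_\T \in \argmin_{\rho \in \cM^+(\Om)} \Big\{ \tfrac{1}{2\T} \FR^2(\rho, \rho^{n+1/2}_\T) + \cE_2(\rho) \Big\}.
\end{equation*}
The Euler--Lagrange equations for the two steps will recover, after a piecewise interpolation in time, discrete analogues of \eqref{GF1} and \eqref{GF2}; coupled with \eqref{elliptic} for $c[\rho^{n+1/2}_\T]$, these assemble into a discrete version of \eqref{real_weak_solution}.

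The main obstacle is a uniform bound $a_n := \|\rho^n_\T\|_{L^\infty} \le M$ depending only on $\al,\bt,r,\chi$ and $\|\rho_0\|_{L^\infty}$. For the Wasserstein step, adapting the flow-interchange / $L^\infty$-estimate of \cite{CS18} yields $a_{n+1/2} \le (1 + C\chi \T a_n + o(\T)) a_n$, where the chemotactic drift divergence $-\chi \Delta c = -\chi(\rho - \Lam c)$ is controlled pointwise by $C\chi \|\rho\|_{L^\infty}$ via \eqref{elliptic} and the elliptic maximum principle. For the Fisher--Rao step, the Euler--Lagrange relation is pointwise, and combined with (A2) produces $a_{n+1} \le (1 + \T(\bt a_{n+1/2}^{r-1} - \al))^{-1} a_{n+1/2}$ whenever the factor is positive. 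Composing yields the discrete inequality $a_{n+1} \le a_n + \T a_n[C\chi a_n - (\bt a_n^{r-1} - \al)] + o(\T)$, whose stationary threshold is exactly $\chi < s^{-1}(\bt s^{r-1} - \al)$ at $s = a_n$. The four cases of \eqref{chi_star} match the shape of $s \mapsto s^{-1}(\bt s^{r-1} - \al)$: for $r \ge 2$ it is non-decreasing, so the condition at $s = \|\rho_0\|_{L^\infty}$ is preserved along the iteration (giving $\chi_* = \infty$ when $r>2$ and $\chi_* = \bt$ when $r=2$); for $1<r<2$ the function attains an interior maximum at $s_* = (\al/(\bt(2-r)))^{1/(r-1)}$, and $\chi_*$ is the value at $\|\rho_0\|_{L^\infty}$ when $\|\rho_0\|_{L^\infty} > s_*$ (damping already dominates there) and at $s_*$ otherwise.

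With the uniform $L^\infty$ bound in hand, summing the two-step JKO energy inequalities and using the $L^\infty$-based lower bound on $\cE_1 + \cE_2$ gives $\sum_n [W_2^2(\rho^{n+1/2}_\T,\rho^n_\T) + \FR^2(\rho^{n+1}_\T,\rho^{n+1/2}_\T)] \le C$; via the Benamou--Brenier characterisation of \WFR this yields $\tfrac{1}{2}$-H\"older continuity of the piecewise-constant interpolant in the \WFR-metric. A flow-interchange of the Wasserstein step against the heat semigroup, exploiting the convexity of $U$, then produces the $L^2$ estimate on $\rho \grad U'(\rho)$ asserted in \eqref{The_estimate}.

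Finally, extract a subsequence $\T \to 0$ converging in \WFR; the uniform $L^\infty$ bound upgrades this to weak-$\ast$ $L^\infty$ convergence of the densities. Linearity of \eqref{elliptic} and elliptic regularity yield strong $H^1$ convergence of $c[\rho_\T]$, which passes the chemotactic term $\chi \rho \grad c[\rho]$ to the limit; the diffusion term passes by the weak $L^2$ convergence of the flux, and the reaction term $\rho F'(\rho)$ passes by dominated convergence using (A3) together with the sup-norm bound. The limit $\rho$ then satisfies Definition \ref{def_weak_solution}, and the $\tfrac{1}{2}$-H\"older estimate and the bounds in \eqref{The_estimate} are inherited from the discrete level.
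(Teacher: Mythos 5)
Your overall strategy---alternating a constrained Wasserstein step for $\cE_1$ with a Fisher--Rao step for $\cE_2$, proving a uniform $L^\infty$ bound via a balance between the \cite{CS18} growth rate and the logistic damping, and then passing to the limit---matches the paper's route (Sections \ref{section_splittingJKO}--\ref{section_proofs}), and your heuristic explanation of where the four cases of $\chi_*$ in \eqref{chi_star} come from (the graph of $s\mapsto s^{-1}(\bt s^{r-1}-\al)$) is correct and coincides with the paper's parametrization via $M\mapsto\min(M\eta_M^{-1},M\|\rho_0\|_\infty^{-1})$. However, there are two substantive gaps.

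First, you pose the Wasserstein step as an unconstrained minimisation over all of $\cM^+(\Om)$, but $\cE_1$ is not displacement convex and, because of the negative chemotactic potential $-\tfrac{\chi}{2}\int(\Lam c^2+|\grad c|^2)$, is not coercive; the paper's scheme \eqref{argmin} imposes the density cap $\mu\le\tfrac{1}{\T\chi}$ precisely so that Proposition \ref{CS_theorem} (existence, Euler--Lagrange relation, and the $L^\infty$ estimate \eqref{CS_estimate}) applies. Without that cap, existence of minimisers and the subsequent $L^\infty$ estimate are not available, so your recursion $a_{n+1/2}\le(1+C\chi\T a_n+o(\T))a_n$ has no starting point.

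Second, and more seriously, your compactness argument is insufficient to pass the nonlinear terms to the limit. You only claim weak-$\ast$ $L^\infty$ convergence of the densities, which does not pass through $\rho\mapsto\grad\Psi(\rho)=\rho\grad U'(\rho)$ or $\rho\mapsto\rho F'(\rho)$; the weak $L^2$ limit of $\grad\Psi(\tilde\rho^\T)$ cannot be identified with $\grad\Psi(\rho)$, and dominated convergence for the reaction term requires pointwise (not weak-$\ast$) convergence. The paper overcomes this by combining the time-equicontinuity \eqref{WFR_continuity} with the spatial bound \eqref{H^1} in the Rossi--Savar\'e extension of the Aubin--Lions lemma (Proposition \ref{ALRS}), which upgrades the convergence to strong $L^p(\Om\times(0,T))$ and hence, up to a subsequence, pointwise a.e.; only then do \eqref{RHS_seoncdterm_1} and the reaction-term limit go through. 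Your proposal is also silent on the need to first work under the auxiliary hypothesis $\textbf{(H)}$ (needed for Proposition \ref{CS_theorem}(ii)--(iii) and Lemma \ref{preserved_supports}(ii)) and to remove it afterwards by mollifying $\rho_0$ and regularising $U$, which is the content of the paper's Step 3 and is nontrivial precisely because the constants in \eqref{uniform_estimate4} and \eqref{H^1} must be shown to converge as the regularisation is removed.

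As a minor difference in approach: you invoke a flow-interchange against the heat semigroup to obtain the $L^2$ bound on $\rho\grad U'(\rho)$. The paper instead gets \eqref{H^1} directly from the one-step JKO dissipation inequality $\tfrac{W_2^2(\rho_n^\T,\rho_{n+1/2}^\T)}{\T}\le\cE_1(\rho_n^\T)-\cE_1(\rho_{n+1/2}^\T)$ combined with the Euler--Lagrange identity \eqref{take_grad_OC1}, the telescoping sum, Young's inequality, and the elliptic estimate for $\grad c$. Both are legitimate, but the paper's is more elementary; if you do pursue a flow interchange, you would have to justify that the heat flow preserves the density constraint $\mu\le\tfrac{1}{\T\chi}$ and the Neumann boundary condition.
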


\begin{remark}
We make some comments on the existence of blow-up/bounded solutions in \cite{Wink08, W18}, which are most relevant to our results.
The only case $\chi=1$ was treated in \cite{W18}, but it is not difficult to see that the result can be also extended to arbitrary positive $\chi$.
In such a case, the main result in \cite{W18} can be restated as follows: When $\Om= B_R(0)$, $r \in (1,\frac{7}{6})$, and $d =3,4$ (or $r \in (1, 1+\frac{1}{2(d-1)})$ with $d\geq5$), there exist $C>0$ and $R_0>0$ such that if
$\chi \dashint_{B_{R_0}(0)} \rho_0 \geq C$, then a finite-time blow-up occurs, where $\dashint_{B_{R_0}} \rho_0$ denotes the mean value of $\rho_0$ in $B_{R_0}(0)$.
Thus, for a given $\rho_0$, our results in Theorem \ref{main} are about global existence in case $\chi$ is bounded by the number in \eqref{chi_star}, while the blow-up result in \cite{W18} was derived for large $\chi$.
As mentioned earlier, global bounded solutions were constructed in \cite{Wink08} by assuming a type of logistic source, $\al - \bt \rho^r$,  which is given differently from ours, $\al \rho- \bt \rho^r$. It seems that the values of $\chi_*$ in the third and fourth cases of \eqref{chi_star} are essentially the same as the condition in \cite[Lemma 2.1]{Wink08}.
 In particular, in the case $\al=0$, two conditions are identical.
However, the critical value of $\chi$ remains open, so that blow-up or global existence is separated by this value.
\end{remark}

\begin{remark}
When $1<r<2$, 
our result reads that if $||\rho_0||_{L^\infty} \leq C_{\al,\bt,r}^{(2)}$, then the bounded weak solution globally exists as long as $\chi < C_{\al,\bt,r}^{(1)}$, independent of the size of $||\rho_0||_{L^\infty}$, where
$$
C_{\al,\bt,r}^{(1)}
:=\al^{\frac{2-r}{1-r}} \bt^{\frac{1}{r-1}}(2-r)^{\frac{2-r}{r-1}} (r-1), \qquad C_{\al,\bt,r}^{(2)}:=\big( \frac{\al}{\bt(2-r)} \big)^{\frac{1}{r-1}}.
$$ 
On the other hand, in the case that $||\rho_0||_{L^\infty}>C_{\al,\bt,r}^{(2)}$ and it is sufficiently large, $\chi$ is to be less than $\frac{\bt}{||\rho_0||_{L^\infty}^{2-r}}$ for the global existence.
\end{remark}

\begin{remark}\label{Dirichlet}
For variations of the elliptic equation \eqref{elliptic}, the proof of Theorem \ref{main} applies almost as well. For example, if we replace \eqref{elliptic} with 
\begin{equation}\label{Elliptic2}
  \begin{cases}
         -\Delta c= \rho& \text{in $\Om \times (0,T)$}, \\
\hfil	c|_{\p \Om}=0               & \text{on $\p \Om \times (0,T)$,} 
  \end{cases}
\end{equation}	
we obtain the same result as Theorem \ref{main}. 
To this end,
all we have to do is to replace $\cE_{1}$ with $\widetilde{\cE}_{1}$ defined as 
$\widetilde{\cE}_{1}(\rho)=                                 \int_{\Om} U \big( \rho(x) \big)  dx - \frac{\chi}{2} \int_\Om |\grad c(x)|^2 dx$
where $c$ depends on $\rho$ through $-\Delta c = \rho$ with $c=0$ on $\p \Om$.
\end{remark}

\begin{remark}\label{corollary}
With the same assumptions on $\Om, \rho_0$, and $\chi$ as in Theorem \ref{main}, we note that
the solution of \eqref{classical_KSL} becomes a classical solution in $[\delta,\infty) \times \Om$ for any $\del>0$.
In addition, if we further assume $\rho_0 \in (C^\infty \cap L^\infty)(\Om)$, then we have a unique classical solution in $[0, \infty) \times \OM$. 
\end{remark}

From now on, without loss of generality,  we will assume $\Lam \equiv 1$ since it does not affect our main result. We also suppose that $\Om$ is a strictly convex bounded domain. Throughout
the paper, we use the convention that $1/\alpha=+\infty$ when $\alpha=0$.

This paper is organized as follows:
Section \ref{section_pre} is prepared for the preliminaries. In Section \ref{section_splittingJKO}, we study two JKO schemes, namely Wasserstein and Fisher-Rao JKO schemes. In Section \ref{section_lemmas}, we construct approximate weak solutions. Section \ref{section_proofs} is devoted to providing the proof of Theorem \ref{main}.

\vspace{5mm}
\section{Preliminaries}\label{section_pre}

In this section, we deal with three metrics between positive measures and recall some properties on them.

We start with an introduction to \textit{Wasserstein metric} (see e.g. \cite{AGS08, S15, V03} for details). For consistency with \cite{CS18}, we use the half-quadratic cost
$c(x,y)=\frac{1}{2}|x-y|^2$.

\begin{definition}\label{W2_distance}
Let $\mu , \nu \in \cM^+(\Om)$ with equal masses. We define $2$-Wasserstein distance between $\mu$ and $\nu$ given by
\begin{equation}\label{optimal_plan}
W_2^2(\mu,\nu):= \min_{\gam \in \Gam(\mu,\nu)} \int_{\Om \times \Om} \frac{1}{2} |x-y|^2 d \gam(x,y)
\end{equation}
where $\Gam(\mu,\nu)$ denotes the subset of $\cM^+(\Om \times \Om)$ with $\mu$ as the first marginal and $\nu$ as the second marginal. 
\end{definition}
We recall several facts on $W_2$:
If $\mu$ is absolutely continuous with respect to $\cL^d$, then the equality in \eqref{optimal_plan} is attained for some $\gam=(id , T)_\# \mu$. In this case, $W_2^2(\mu,\nu)=\frac{1}{2}\int_\Om |T(x) -x|^2 d\mu(x)$, and we say that $T$ is an optimal transport map. Next, we introduce Kantorovich's duality, that is,
\begin{equation}\label{KantorovichDuality}
W_2^2(\mu,\nu) = \max_{(\vphi, \psi) } \Big( \int_\Om \vphi(x) ~d\mu(x) + \int_\Om \psi(y) ~d\nu(y) \Big)
\end{equation}
where the maximum is taken over all pairs $(\vphi,\psi) \in C(\OM) \times C(\OM)$ satisfying $\vphi(x)+ \psi(y) \leq \frac{1}{2} |x-y|^2$. Indeed, the maximum in \eqref{KantorovichDuality} is attained by a pair of $(\vphi, \vphi^c)$, where $\vphi^c(y):= \inf_{x \in \Om} \big(\frac{1}{2} |x-y|^2 - \vphi(x) \big)$. In this case, $\vphi$ is called a \textit{Kantorovich potential} from $\mu$ to $\nu$, and $\cK(\mu, \nu)$ denotes the set of all Kantorovich potentials from $\mu$ to $\nu$.
It is important to note that both $\vphi$ and $\vphi^c$ are Lipschitz. Since $\vphi$ is Lipschitz, it is differentiable a.e. Finally, we pay attention to the relationship between an optimal transport map $T$ from $\mu$ to $\nu$ and its Kantorovich potential $\vphi$. In fact,
the equality $T(x) = x- \grad \vphi(x)$ holds a.e.

\begin{theorem}[Benamou-Brenier formula]\label{BenamouBrenier}
Let $\rho_0, ~\rho_1 \in \cM^+(\Om)$ with $|\rho_0|=|\rho_1|$. Then
$$
W_2^2(\rho_0, \rho_1)= \min_{(\rho, v) \in \cA_{W_2}[\rho_0, \rho_1] } \int_0^1 \int_\Om  \frac{1}{2} |v_t|^2 d\rho_t dt,
$$
where the admissible set $ \cA_{W_2}[\rho_0, \rho_1]$
consists of curves $[0,1] \ni t  \mapsto (\rho_t, v_t) \in \cM^+(\Om) \times L^2(\Om, d\rho_t)^d$ such that $t \mapsto \rho_t$ is narrowly continuous with endpoints $\rho_0, \rho_1$ and solving the continuity equation
$$
\p_t \rho_t + \grad \cdot (\rho_t v_t) = 0
$$
in the sense of distributions, that is, for any $\vphi \in C_c^\infty\big(  (0,1)\times \overline{\Omega}  \big)$
\begin{equation}\nonumber
\int_0^1 \int_\Om \p_t \vphi d \rho_t dt + \int_0^1 \int_\Om \grad_x \vphi \cdot v_t d\rho_t dt=0.
\end{equation}
\end{theorem}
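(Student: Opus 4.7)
The plan is to establish the two inequalities separately. For the bound $W_2^2(\rho_0, \rho_1) \geq \min \int_0^1 \int_\Om |v_t|^2 d\rho_t dt$, I would exhibit the displacement interpolation as an admissible competitor attaining $W_2^2$. Take an optimal plan $\gam \in \Gam(\rho_0, \rho_1)$ realizing \eqref{optimal_plan}; since $\Om$ is convex, the affine map $\pi_t(x,y) := (1-t)x + ty$ sends $\Om \times \Om$ into $\OM$. Set $\rho_t := (\pi_t)_\# \gam$ and define $v_t$ via the disintegration $v_t(z) := \mathbb{E}_\gam[\, y - x \mid \pi_t(x,y) = z\,]$. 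Testing against $\vphi \in C_c^\infty((0,1)\times \OM)$ verifies that $(\rho_t, v_t)$ solves the continuity equation in the sense of distributions, and Jensen's inequality applied fibrewise gives
\begin{equation*}
\int_0^1 \int_\Om |v_t|^2 d\rho_t dt \leq \int_0^1 \int_{\Om \times \Om} |y - x|^2 d\gam(x,y) dt = W_2^2(\rho_0, \rho_1).
\end{equation*}

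For the matching lower bound, given any admissible $(\rho_t, v_t)$, I would show that $t \mapsto \rho_t$ is $W_2$-absolutely continuous with metric derivative controlled by $\|v_t\|_{L^2(\rho_t)}$. The usual device is to take $\vphi \in \cK(\rho_s, \rho_t)$, mollify to $\vphi_\ep \in C^\infty(\OM)$, and use it as a test function in the continuity equation to obtain
\begin{equation*}
\int_\Om \vphi_\ep d\rho_t - \int_\Om \vphi_\ep d\rho_s = \int_s^t \int_\Om \grad \vphi_\ep \cdot v_r d\rho_r dr,
\end{equation*}
then pass to $\ep \to 0$ and combine with Kantorovich's duality \eqref{KantorovichDuality} and Cauchy--Schwarz to deduce $W_2^2(\rho_s, \rho_t) \leq (t-s) \int_s^t \int_\Om |v_r|^2 d\rho_r dr$. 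Partitioning $[0,1]$ finely and chaining via the triangle inequality then yields $W_2^2(\rho_0, \rho_1) \leq \int_0^1 \int_\Om |v_t|^2 d\rho_t dt$.

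The principal difficulty lies in this lower bound: rigorously admitting Lipschitz Kantorovich potentials in place of smooth test functions in the weak continuity equation, and establishing absolute continuity of $t \mapsto \rho_t$ in $(\cM^+(\Om), W_2)$ without any a priori regularity on $\rho_t$. The cleanest resolution goes through the superposition principle of \cite{AGS08}, which represents $(\rho_t)$ as the time-marginals of a probability measure concentrated on absolutely continuous curves driven by $v$; the kinetic bound then reduces to a fibrewise one-dimensional Jensen inequality. Attainment of the minimum (so the statement is a \emph{minimum} rather than an infimum) is automatic from the upper-bound construction, since the displacement interpolation it produces is itself a minimizer. Reduction to the equal-mass probability-measure case (via rescaling by the common total mass) handles the fact that $\rho_0, \rho_1 \in \cM^+(\Om)$ rather than $P(\Om)$.
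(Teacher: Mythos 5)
The paper does not prove Theorem~\ref{BenamouBrenier}; it is recalled as a classical fact in the preliminaries, with pointers to \cite{AGS08, S15, V03}. Your sketch is a reasonable outline of the standard textbook argument, so there is nothing in the paper to compare it against line by line, but a few remarks on its correctness are in order.

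The upper-bound half is essentially right: the displacement interpolation $\rho_t = (\pi_t)_\#\gam$ is admissible (convexity of $\Om$ is used exactly as you note), the velocity defined as the conditional expectation $v_t(z) = \mathbb{E}_\gam[y-x \mid \pi_t=z]$ does satisfy the continuity equation, and Jensen for conditional expectation gives $\int|v_t|^2\,d\rho_t \le \int|y-x|^2\,d\gam = W_2^2(\rho_0,\rho_1)$ for every $t$, hence after integrating in $t$ the desired inequality, with equality precisely when $\gam$ is deterministic. This simultaneously settles attainment.

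For the lower bound, your instinct is correct that the delicate point is turning the distributional continuity equation into a bound on the metric derivative; the mollified-Kantorovich-potential route works but needs care near $\p\Om$ (one typically extends $\vphi$ to a Lipschitz function on $\R^d$, mollifies, and uses that the boundary terms vanish for the Neumann-type admissible velocities). The superposition principle of \cite[Theorem~8.2.1]{AGS08} is indeed the cleanest way to get $|\rho'|(t) \le \|v_t\|_{L^2(\rho_t)}$ for a.e.\ $t$, after which $W_2(\rho_0,\rho_1) \le \int_0^1 |\rho'| \le \bigl(\int_0^1 \|v_t\|^2_{L^2(\rho_t)}\,dt\bigr)^{1/2}$ by Cauchy--Schwarz. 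One small caveat: superposition in \cite{AGS08} requires an integrability condition such as $\int_0^1\int |v_t|\,d\rho_t\,dt < \infty$; this holds automatically here because any competitor with finite action satisfies it via Cauchy--Schwarz on the bounded domain $\Om$, but it is worth stating. The rescaling to probability measures is unproblematic since both $W_2^2$ and the action functional scale linearly in the total mass.

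In summary: your blind proof is sound modulo the usual technicalities and follows the canonical Benamou--Brenier argument, which is also the one in the references the paper cites; the paper itself supplies no proof to diverge from.
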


The \textit{Fisher-Rao distance} between $\rho_0$ and $\rho_1$, denoted by $\FR(\rho_0,\rho_1)$, is defined as follows:
\begin{definition}\label{FR_distance}
Let $\rho_0,~\rho_1 \in \cM^+(\Om)$. 
$$
\FR^2(\rho_0, \rho_1) := \min_{(\rho, r) \in \cA_{\FR}[\rho_0, \rho_1] } \int_0^1 \int_\Om |r_t|^2 d\rho_t dt,
$$
where the admissible set $ \cA_{\FR}[\rho_0, \rho_1]$
consists of curves $[0,1] \ni t  \mapsto (\rho_t, r_t) \in \cM^+(\Om) \times L^2(\Om, d\rho_t)$ such that $t \mapsto \rho_t$ is narrowly continuous with endpoints $\rho_0, \rho_1$ and solving 
$$
\p_t \rho_t = \rho_t r_t
$$
in the sense of distributions, that is, for any $\vphi \in C_c^\infty\big(  (0,1)\times \OM  \big)$
\begin{equation}\nonumber
\int_0^1 \int_\Om \p_t \vphi d \rho_t dt + \int_0^1 \int_\Om \vphi r_t d\rho_t dt=0.
\end{equation}
\end{definition}

\begin{remark}
If $\rho_0, \rho_1 \ll \lam$ for some $\lam \in \cM^+(\Om)$, the following formula is given \cite{GM17}:
\begin{equation}\label{FR_formula}
\FR^2(\rho_0, \rho_1)= 4 \int_\Om \bigg|  \sqrt{\frac{d\rho_0}{d \lam}}- \sqrt{\frac{d\rho_1}{d \lam}} \bigg|^2 d\lam,
\end{equation}
where $\frac{d\rho_i}{d \lam}$, $i=0,1$, denote Radon-Nikodym derivatives.
\end{remark}

The \textit{Wasserstein-Fisher-Rao distance} between $\rho_0$ and $\rho_1$, denoted by $\WFR(\rho_0,\rho_1)$, is defined as follows:
\begin{definition}\label{WFR_distance}
Let $\rho_0,~\rho_1 \in \cM^+(\Om)$. 
$$
\WFR^2(\rho_0, \rho_1) := \min_{(\rho, v,r) \in \cA_{\WFR}[\rho_0, \rho_1] } \int_0^1 \int_\Om  \left( \frac{1}{2} |v_t|^2 +|r_t|^2  \right) d\rho_t dt,
$$
where the admissible set $ \cA_{\WFR}[\rho_0, \rho_1]$
consists of curves $[0,1] \ni t  \mapsto (\rho_t, v_t,r_t) \in \cM^+(\Om) \times L^2(\Om, d\rho_t)^d \times L^2(\Om, d\rho_t)$ such that $t \mapsto \rho_t$ is narrowly continuous with endpoints $\rho_0, \rho_1$ and solving the continuity equation with source
$$
\p_t \rho_t + \grad \cdot (\rho_t v_t) = \rho_t r_t
$$
in the sense of distributions, that is, for any $\vphi \in C_c^\infty\big(  (0,1)\times \OM  \big)$
\begin{equation}\nonumber
\int_0^1 \int_\Om \p_t \vphi d \rho_t dt + 
\int_0^1 \int_\Om \big(\grad_x \vphi \cdot v_t + \vphi r_t  \big) d\rho_t dt=0.
\end{equation}

\end{definition}

\begin{remark}\label{WFR_metric_remark}
We leave some comments on the $\WFR$ metric: It was shown in \cite{KMV16} that
\begin{itemize}
\item $(\cM^+(\Om), \WFR)$ is a complete metric space. 
\item WFR metrizes 
narrow convergence on $\cM^+(\Omega)$.

\item $\WFR$ distance is lower semicontinuous with respect to the weak$^*$ convergence of measures.
\end{itemize}

\end{remark}

For $\mu \in \cM^+(\Om)$, we use the notation $| \mu |$ to denote $\mu(\Om)$. Using the three definitions above, it is not difficult to check that for any $\rho_0, \rho_1 \in \cM^+(\Om)$,
$$
\WFR(\rho_0,\rho_1) \leq W_2(\rho_0,\rho_1) \quad \textrm{whenever $|\rho_0|=|\rho_1|$,}
$$
and
\begin{equation}\label{WFR_leq_FR}
\WFR(\rho_0,\rho_1) \leq \FR(\rho_0,\rho_1). 
\end{equation}
Therefore, using the elementary inequality $(a+b)^2 \leq 2(a^2+b^2)$, we see that for all $\sigma \in \cM^+(\Om)$ with $|\rho_0| = |\sigma|$, 
\begin{equation}\label{pytagorean}
    \WFR^2(\rho_0, \rho_1) \leq 2 \big( W_2^2(\rho_0,\sigma) + \FR^2(\sigma, \rho_1 )   \big).
\end{equation}
We refer the reader to \cite{CPSV2018, CPSV18, GM17, KMV16,  LMS16, LMS18} for a comprehensive description of the above metrics.

\vspace{5mm}
Finally, let us briefly recall some classical results for \eqref{elliptic} and 
\eqref{Elliptic2}.  
Let $c$ be a solution of \eqref{elliptic} or \eqref{Elliptic2}.
It is well known that 
\begin{equation}\nonumber
||c||_{W^{2,p}(\Om)} \leq K ||\rho||_{L^p(\Om)}, \quad 1<p<\infty,
\end{equation}
for some $K>0$ (see e.g. \cite[Section 3 in Chapter 9]{K08} for Neumann boundary case and \cite[Chapter 9]{GT77} for Dirichlet boundary case).
For fixed $d$, 
 if $\rho \in L^\infty(\Om)$ and $p$ is large enough, by Sobolev inequality, there exist $\gam, K_1, K_2, K_3>0$ such that
 \begin{equation}\label{elliptic_estimate}
||c||_{C^{1,\gam}(\OM)} \leq K_1  ||c||_{W^{2,p}(\Om)}
\leq K_2 ||\rho||_{L^p(\Om)} \leq K_3 ||\rho||_{L^\infty(\Om)}.
\end{equation}

Thanks to the estimate \eqref{elliptic_estimate}, it is possible to check Remark \ref{Dirichlet} immediately. 
For the rest of this paper, we proceed with \eqref{elliptic}. \\

Finally, we recall an extension of the Aubin-Lions lemma proved in \cite{RS03}.
\begin{proposition}\label{ALRS}
On a Banach space $X$, let be given
\begin{enumerate}[label=(\alph*)]
\item a normal coercive integrand $\mathscr{F}: X \ra [0,\infty]$, i.e.,  $\mathscr{F}$ is lower semicontinuous, and its sublevels are relatively compact in $X$;

\item a pseudo-distance $g: X \times X \rightarrow [0,\infty]$, i.e., $g$ is lower semicontinuous, and $g(\rho, u)=0$ for any $\rho , u \in X$ with $\mathscr{F}(\rho) <\infty$, $\mathscr{F}(u) <\infty$ implies $\rho=u$.
\end{enumerate}
Let further $\mathscr{U}$ be a set of measurable functions $u : (0,T) \ra X$ with a fixed $T>0$. Under the hypotheses that 
\begin{equation}\label{time_H^1}
\sup_{u\in \mathscr{U}} \int_0^T \mathscr{F}(u(t)) dt <\infty,
\end{equation}
and
\begin{equation}\label{AL_limsup}
\lim_{h \da 0} \sup_{u \in  \mathscr{U}} \int_0^{T-h} g(u(t+h) , u(t))dt =0,
\end{equation}
$\mathscr{U}$ contains an infinite sequence $(u_n)_{n \in \N}$ that converges in measure (with respect to $t \in (0,T)$) to a limit $u^*:(0,T) \ra X$. 
\end{proposition}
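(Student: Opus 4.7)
The plan is to follow a Young-measure/Prokhorov strategy adapted to the non-metric, pseudo-distance setting. The three building blocks are: fiberwise relative compactness at a.e.\ $t$ (from (a) and the integral bound \eqref{time_H^1}); tightness-based narrow compactness of the space-time image measures associated with the $u_n$'s; and a rigidity step using (b) and \eqref{AL_limsup} to force the limiting disintegration to be a Dirac mass for a.e.\ $t$.

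First, Chebyshev's inequality applied to \eqref{time_H^1} yields
\begin{equation}\nonumber
\bigl|\{t \in (0,T) : \mathscr{F}(u(t)) > \lambda\}\bigr| \leq C/\lambda \qquad \text{for every } u \in \mathscr{U},
\end{equation}
where $C$ is the uniform bound in \eqref{time_H^1}. Since the sublevels $K_\lambda := \{\mathscr{F} \leq \lambda\}$ are relatively compact by (a), every $u \in \mathscr{U}$ takes values in the fixed compact $\overline{K_\lambda}$ outside a set of measure $\leq C/\lambda$, uniformly in $u$. To each $u_n \in \mathscr{U}$ I then associate the image measure $\mu_n := (\mathrm{id}, u_n)_\# \cL^1|_{(0,T)}$ on $(0,T) \times X$ (restricted if needed to the separable subspace generated by $\bigcup_\lambda K_\lambda$). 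The tightness above passes to $\{\mu_n\}$, so Prokhorov's theorem produces a subsequence $\mu_{n_k} \rightharpoonup \mu$ narrowly, and disintegrating along the time projection gives $\mu = \int_0^T \nu_t\, dt$ with $\nu_t$ probability measures on $X$.

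The decisive and most technical step, where the main difficulty lies, is to show that $\nu_t = \delta_{u^*(t)}$ for a.e.\ $t$ and a measurable $u^* : (0,T) \to X$. To this end I would consider the doubled image measures $\pi_n^h := (t, u_n(t+h), u_n(t))_\# \cL^1|_{(0,T-h)}$. Lower semicontinuity of $g$ combined with narrow convergence of $\pi_{n_k}^h$ gives the Fatou-type inequality
\begin{equation}\nonumber
\int_0^{T-h} \!\!\iint_{X \times X} g(x,y)\, d\nu_{t+h}(x)\, d\nu_t(y)\, dt \leq \liminf_{k \to \infty} \int_0^{T-h} g(u_{n_k}(t+h), u_{n_k}(t))\, dt,
\end{equation}
the right-hand side tending to $0$ as $h \downarrow 0$ by \eqref{AL_limsup}. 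A Lebesgue-differentiation argument in $t$, together with narrow continuity of $t \mapsto \nu_t$ along a further extraction, produces $\iint g \, d(\nu_t \otimes \nu_t) = 0$ for a.e.\ $t$. Since $\int_0^T \!\!\int \mathscr{F}(x)\, d\nu_t(x)\, dt \leq C$ by lower semicontinuity of $\mathscr{F}$ and Fatou, $\nu_t$-a.e.\ point has $\mathscr{F} < \infty$, so hypothesis (b) forces $\nu_t$ to be concentrated at a single point $u^*(t)$. Measurability of $u^*$ follows from the measurable disintegration theorem. Convergence in measure $u_{n_k} \to u^*$ is then a standard consequence of the narrow convergence $\mu_{n_k} \rightharpoonup (\mathrm{id}, u^*)_\# \cL^1$: otherwise a set of positive asymptotic measure on which $d_X(u_{n_k}, u^*) > \eta$ would charge, in the narrow limit, a closed set disjoint from the graph of $u^*$, contradicting $\nu_t = \delta_{u^*(t)}$. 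The genuine obstacle is the simultaneous handling of the $h \downarrow 0$ limit and the narrow limit in $n$ inside a double integral against the merely lower semicontinuous pseudo-distance $g$, coupled with the verification that $\mathscr{F}$-finiteness survives the limit so that condition (b) can be invoked.
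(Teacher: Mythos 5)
The paper does not prove Proposition \ref{ALRS}; it states it as a recalled result and cites \cite{RS03}, so there is no internal proof to compare against. Evaluating your argument on its own terms: the Young-measure/Prokhorov skeleton is a reasonable start, but the decisive rigidity step has a genuine gap. When you pass to the narrow limit of the doubled image measures $\pi^h_{n_k}$, the disintegration of the limit at time $t$ is \emph{some coupling} $\sigma^h_t$ of $\nu_{t+h}$ and $\nu_t$, not the product $\nu_{t+h}\otimes\nu_t$; joint image measures do not factorize in the narrow limit, and there is no reason for the limit plan to be the tensor product rather than, say, a near-diagonal coupling. Your Fatou-type inequality, written against $d\nu_{t+h}\otimes d\nu_t$, is therefore false in general. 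Replacing the product with the correct coupling $\sigma^h_t$, the hypothesis \eqref{AL_limsup} only yields that a limit coupling $\sigma_t$ of $\nu_t$ with itself is concentrated on the diagonal within $\{\mathscr{F}<\infty\}$ --- but that is automatically satisfied by the diagonal coupling of an arbitrary $\nu_t$ and does not force $\nu_t$ to be a Dirac mass, so the rigidity step collapses.

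There is a second, smaller, gap: the appeal to ``narrow continuity of $t\mapsto\nu_t$ along a further extraction.'' The disintegration $t\mapsto\nu_t$ is only measurable in $t$, and one cannot extract a narrowly continuous sub-curve of a merely measurable one; a correct argument must instead work with approximate (Lebesgue) continuity of $t\mapsto\nu_t$ or avoid pointwise-in-$t$ limits of the couplings altogether. To actually close the rigidity step one has to argue differently --- typically by contradiction: if $\nu_t$ is non-Dirac on a time set of positive measure, one locates disjoint open sets that $\nu_t$ charges by a fixed $\delta>0$ uniformly over a positive-measure set of times, then shows (using the Young-measure convergence and the positivity of $g$ between those sets on the compact $\mathscr{F}$-sublevels) that $\int_0^{T-h} g(u_n(t+h),u_n(t))\,dt$ stays bounded away from zero for all small $h$ uniformly in $n$, contradicting \eqref{AL_limsup}. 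That quantitative lower bound is precisely what your tensor-product Fatou inequality was meant to replace, and it is the part your write-up does not contain.
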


\begin{remark}
At the conclusion of the above proposition, the convergence in measure means that
$$
\lim_{n \ra \infty} 
\Big|  \{t \in (0,T) : 
|| u_n(t) - u^* (t)   ||_{X} \geq \ep   \}  \Big| =0 \qquad \forall \ep>0.
$$
\end{remark}

\vspace{5mm}
\section{Splitting JKO scheme}\label{section_splittingJKO}
Motivated by \cite{CS18, GLM19, GM17}, we analyze \eqref{KSL} by exploiting two gradient flow structures \eqref{GF1} and \eqref{GF2}. To be more precise, we alternately use the Wasserstein JKO scheme / Fisher-Rao JKO scheme.

\subsection{Wasserstein JKO step}
We recall the results introduced in \cite[Theorem 1 and Subsection 2.1]{CS18} in our setting, which is presented with the following minimization problem: For fixed $\T>0$, and for $g \in L^\infty(\Om) \cap \cM^+(\Om)$,
\begin{equation}\label{argmin}
\rho \in \argmin_{ \mu \in \cS}
\Big\{ \cE_{1}(\mu) +  \frac{W_2^2(g,\mu)}{\T} \Big\},
\end{equation}
where $\cS= \{   \mu \in \cM^+(\Om) \cap    L^\infty(\Om) : \mu \leq \frac{1}{\T \chi} , ~|\mu|=|g | \} $.

Thanks to \cite[Theorem 1 and Subsection 2.1]{CS18}, the following proposition holds. Although the proof is essentially the same, we provide its proof in the Appendix for the convenience of the reader.

\begin{proposition}\label{CS_theorem}
Suppose that $U$ satisfies Assumption \ref{assumptionsUF}. The following statements hold.
\begin{itemize}
\item[\emph{(i)}] 
There exists at least one minimizer of the scheme in \eqref{argmin}.

\item[\emph{(ii)}] 
Let $\overline{B}:=\frac{1}{\T \chi}$.
If $\inf_{t>0} tU''(t)>0$ and $\rho$ is a minimizer of \eqref{argmin}, there exist a continuous function $p$ with $p \geq 0$ and a constant $l$ such that  $\rho$ satisfies
\begin{equation}\label{Santambrogio!}
    U'(\rho) - \chi c[\rho] + \frac{\vphi}{\T} +p=l,  \qquad (\overline{B} -\rho)p=0,
\end{equation}
where $c[\rho]$ is a solution in \eqref{elliptic}, and 
$\vphi$ is a Kantorovich potential from $\rho$ to $g$, i.e., $\vphi \in \cK(\rho, g)$. In particular, $U'(\rho), c[\rho]$, and $\vphi$ are Lipschitz on $\OM$. Moreover, $\inf_\Om \rho>0$ and
$\rho$ is Lipschitz on $\OM$.
\item[\emph{(iii)}] 
If $\Om$ is strictly convex, $\inf_{t>0} tU''(t)>0$ and $\log g \in C^{0,a}(\OM)$, then 
for every $\lam>1$ there exists a constant $c_0=c_0(\lam, \chi, d)$ such that, if $\T ||g||_{L^\infty} \leq c_0$ then any minimizer of \eqref{argmin}, say $\rho$, satisfies
\begin{equation}\label{CS_estimate}
|| \rho ||_{L^\infty} \leq \frac{||g||_{L^\infty} }{ 1- \lam \T \chi ||g||_{L^\infty}}.
\end{equation}

\end{itemize}
\end{proposition}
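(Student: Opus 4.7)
For part (i), the strategy is the direct method. Any admissible $\mu \in \cS$ satisfies $0\le\mu\le\overline{M}=1/(\T\chi)$ pointwise and $|\mu|=|g|$, so a minimizing sequence $\{\mu_n\}$ is bounded in $L^\infty(\Om)$. Extracting a subsequence that converges weak-$*$ in $L^\infty$ to some $\mu_\infty\in\cS$, I need the three pieces of $\cE_1+W_2^2(g,\cdot)/\T$ to cooperate under this convergence: $\int_\Om U(\mu)\,dx$ is lower semicontinuous because $U$ is convex and continuous; for the Poisson term $-\tfrac{\chi}{2}\int_\Om(|c|^2+|\grad c|^2)$, the elliptic estimate from \eqref{elliptic_estimate} ensures $c[\mu_n]\to c[\mu_\infty]$ strongly in $H^1$, so this term is in fact continuous; and $W_2^2(g,\cdot)$ is lower semicontinuous with respect to weak-$*$ convergence of equal-mass measures. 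Closedness of $\cS$ under weak-$*$ convergence (the constraint $\mu\le\overline{M}$ and the mass constraint both pass to the limit) closes the argument.

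For part (ii), the Euler--Lagrange relation \eqref{Santambrogio!} comes from inner variations. Given an arbitrary admissible perturbation $\mu_\ep=(1-\ep)\rho+\ep\eta$ with $\eta\in\cS$, I differentiate $\cE_1(\mu_\ep)+W_2^2(g,\mu_\ep)/\T$ at $\ep=0^+$ using the first-variation formulas for $U$, the fact that $c[\mu]$ depends linearly on $\mu$ through \eqref{elliptic} (so $\delta(-\tfrac{\chi}{2}\int|c|^2+|\grad c|^2)/\delta\mu=-\chi c[\rho]$), and the standard identity $\delta W_2^2(g,\rho)/\delta\rho=\vphi$ for $\vphi\in\cK(\rho,g)$. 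The pointwise constraint $\rho\le\overline{M}$ produces a nonnegative Lagrange multiplier $p$ with complementary slackness, yielding \eqref{Santambrogio!} and an overall constant $l$. For regularity, $\vphi$ and $c[\rho]$ are Lipschitz on $\OM$ (the former because Kantorovich potentials are Lipschitz, the latter by \eqref{elliptic_estimate}), so $U'(\rho)+p$ is Lipschitz. Since $p\ge 0$ and $p=0$ wherever $\rho<\overline{M}$, on the noncontact set the function $U'(\rho)$ is Lipschitz; the assumption $\inf_{t>0}tU''(t)>0$ forces $\rho$ to stay bounded below (otherwise $U'(\rho)\to-\infty$) and converts Lipschitz control of $U'(\rho)$ into Lipschitz control of $\rho$ itself, which then extends across $\{\rho=\overline{M}\}$.

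The nontrivial statement is part (iii), the quantitative $L^\infty$ bound \eqref{CS_estimate}, and this will be the main obstacle. I will follow the Carrillo--Santambrogio strategy. Fix $\lambda>1$ and suppose, toward a contradiction, that $\|\rho\|_{L^\infty}>\|g\|_{L^\infty}/(1-\lambda\T\chi\|g\|_{L^\infty})$. Using \eqref{Santambrogio!} and the relation $T(x)=x-\grad\vphi(x)$ for the optimal transport from $\rho$ to $g$, the Monge--Ampère identity $\rho(x)=g(T(x))\det(DT(x))$ (valid because $\log g\in C^{0,a}$ and $\Om$ is strictly convex ensure that $T$ is a smooth diffeomorphism by Caffarelli's regularity) lets me compare $\rho$ with $g\circ T$ at a point of maximum of $\rho$. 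At that point $\Delta\vphi\le 0$ from the optimality of the transport and $D^2\vphi\le \mathrm{Id}$ in the Bakelman sense, so $\det(\mathrm{Id}-D^2\vphi)\ge 1-\Delta\vphi+\cdots$, which gives a pointwise bound of the form $\rho(x_0)\le g(T(x_0))\big(1+\Delta\vphi(x_0)+O(\T\chi\|\rho\|_{L^\infty})\big)$. Combined with the Euler--Lagrange equation differentiated twice and the bound $\|\Delta c[\rho]\|_{L^\infty}\le\|\rho\|_{L^\infty}+\|c[\rho]\|_{L^\infty}$ from \eqref{elliptic}, the monotonicity of $U'$ absorbs the $U'(\rho)$ contribution and leaves an inequality $\|\rho\|_{L^\infty}\le\|g\|_{L^\infty}(1+\lambda\T\chi\|\rho\|_{L^\infty})$ provided $\T\|g\|_{L^\infty}$ is smaller than a threshold $c_0(\lambda,\chi,d)$ depending on $\lambda$ through the $\lambda>1$ slack. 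Rearranging yields \eqref{CS_estimate}. Because this argument relies on enough regularity of $\rho$, $\vphi$ and $g$ to apply Monge--Ampère pointwise, I expect the main technical burden to be justifying the second-order computations at the maximum point; I will do so by an approximation (regularize $g$ and $U$, derive the bound, then pass to the limit using weak-$*$ compactness and the lower semicontinuity established in part (i)).
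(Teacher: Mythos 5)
Your outline of parts (i) and (ii) matches the paper's strategy (direct method with weak compactness for existence; a convex perturbation $\rho_\ep=(1-\ep)\rho+\ep\tilde\rho$ plus the first-variation formulas to derive \eqref{Santambrogio!}, with the observation that $U'(\rho)+p=l+\chi c-\vphi/\T$ is Lipschitz and then $U'(0+)=-\infty$ forcing $\rho$ bounded away from zero). The one cosmetic difference in (ii) is that the paper does not argue separately on the non-contact set; it redefines $U'(\rho):=\min\bigl(U'(\overline M),\,l+\chi c-\vphi/\T\bigr)$ globally, which gives the Lipschitz bound for $U'(\rho)$ on all of $\OM$ at once and avoids the patching you describe.

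Part (iii), however, has a genuine gap in the core sign argument. You assert that at a maximum point of $\rho$ one has $\Delta\vphi(x_0)\le 0$ ``from the optimality of the transport,'' and then write $\det(I-D^2\vphi)\ge 1-\Delta\vphi+\cdots$. Both the sign and the direction of the inequality are off. Optimality only gives $D^2\vphi\le I$, hence $\Delta\vphi\le d$, not $\Delta\vphi\le 0$; and what is needed to bound $\rho(x_0)=g(T(x_0))\det(I-D^2\vphi(x_0))$ from above is an \emph{upper} bound on the determinant, hence a \emph{lower} bound on $\Delta\vphi$, not an upper one. The actual mechanism is the Euler--Lagrange identity \eqref{Santambrogio!}: since $U'$ is strictly increasing and $p$ is a nonnegative multiplier supported on $\{\rho=\overline M\}$, the maximum of $\rho$ is attained exactly at a \emph{minimum} of $w:=\vphi-\T\chi c$. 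At an interior minimum of $w$, $\Delta w(x_0)\ge 0$, which combined with $\Delta c=c-\rho$ and $c\ge 0$ gives the crucial lower bound $\Delta\vphi(x_0)\ge -\T\chi\rho(x_0)$. Only then does AM-GM give $\det(I-D^2\vphi(x_0))\le\bigl(1-\Delta\vphi(x_0)/d\bigr)^d\le\bigl(1+\T\chi\rho(x_0)/d\bigr)^d$, from which the self-improving inequality $X\le Y(1+X)^d$ (with $X=\T\chi\|\rho\|_\infty/d$, $Y=\T\chi\|g\|_\infty/d$) follows and the analysis of $G(x)=x/(1+x)^d$ produces the threshold $c_0(\lam,\chi,d)$. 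You also omit the case $x_0\in\p\Om$, which is where the strict convexity of $\Om$ is used: there, $\p_\nu w(x_0)\le 0$ and $\p_\nu c(x_0)=0$ force $\p_\nu\vphi(x_0)=(x_0-T(x_0))\cdot\nu\le 0$, and strict convexity yields $(T(x_0)-x_0)\cdot\nu<0$ unless $T(x_0)=x_0$, so either one reaches a contradiction or falls back to the interior computation. Without the correct sign on $\Delta\vphi$ and without the boundary case, the argument as written does not close.
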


\begin{remark}
Proposition 3.1 is only used in construction steps for approximate solutions by JKO scheme,
and therefore $\inf_{t>0} tU''(t)>0$ is not needed in Theorem \ref{main}.
\end{remark}

\subsection{Fisher-Rao JKO step}

Given $\rho \in \cM^+ (\Om)$, we consider
\begin{equation}\label{argmin2}
\hrho \in \argmin_{\mu \in \cM^+(\Om)} 
\Big\{ \cE_2(\mu) +  \frac{\FR^2(\rho, \mu)}{2\T} \Big\}.
\end{equation}

We clarify the role of the Fisher-Rao variational formulation in \eqref{argmin2}. The minimization problem \eqref{argmin2} is used as a variational construction of the reaction substep. More precisely, it allows us to obtain the Euler-Lagrange equation and the $L^\infty$-damping estimate. Although $\cE_2$ decreases during each isolated Fisher--Rao reaction substep, we do not claim any monotonicity of $\cE_2$ along the full splitting scheme. Indeed, the Wasserstein substep is not designed to decrease $\cE_2$, and no monotonicity property of $\cE_2$ for the combined scheme is asserted or used. Thus, the Fisher-Rao viewpoint is used here as a technical variational device for the reaction step, rather than as an additional energy dissipation structure for the full discrete evolution.

The second minimization problem \eqref{argmin2} was considered in \cite{GLM19} (although the authors only treated $F(\rho)= \frac{\rho^m}{m-1}$, $m>1$). In this paper, we proceed with $F$ under the assumptions (A1)-(A3). In the absence of a precise reference, we prove the existence of a minimizer in the Fisher-Rao step.

\begin{lemma}\label{existence_FR}
Suppose that \emph{(A1)-(A2)} hold. Let $\rho \in \cM^+(\Om)$ with $\cE_2(\rho)<\infty$. Then, for any $\T>0$ there exists a minimizer of the following functional:
\begin{equation}\label{minimize_FisherRao}
\mu \in  \cM^+(\Om)  \mapsto \cE_2(\mu) + \frac{\emph{FR}^2(\rho, \mu)}{2\T}.
\end{equation}
\end{lemma}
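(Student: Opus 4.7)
The plan is to apply the direct method of the calculus of variations to the functional $\mathcal{J}(\mu) := \cE_2(\mu) + \tfrac{1}{2\T}\FR^2(\rho,\mu)$ on $\cM^+(\Om)$. Since the competitor $\mu=\rho$ gives $\mathcal{J}(\rho)=\cE_2(\rho)<\infty$ by hypothesis, the infimum is finite. For coercivity and a lower bound, I integrate the inequality in (A2) to obtain $F(s)\geq F(0)+\tfrac{\bt}{r}s^r-\al s$, and then absorb the linear term via Young's inequality to produce
$$
\cE_2(\mu)\geq \tfrac{\bt}{2r}\int_\Om \mu^r\,dx - C
$$
for some constant $C=C(\al,\bt,r,|\Om|,F(0))$.

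Given a minimizing sequence $(\mu_n)\subset\cM^+(\Om)$, boundedness of $\mathcal{J}(\mu_n)$ combined with the coercivity above yields a uniform bound on $\|\mu_n\|_{L^r(\Om)}$. Since $r>1$, reflexivity of $L^r$ furnishes a subsequence with $\mu_n\rightharpoonup \hrho$ weakly in $L^r(\Om)$ for some $\hrho\geq 0$. Note also that $\cE_2(\rho)<\infty$ forces $\rho\in\mathscr{K}_F\subset L^r_+(\Om)$, so both $\rho$ and every $\mu_n$ are absolutely continuous with respect to Lebesgue measure, and formula \eqref{FR_formula} with $\lam=\cL^d|_\Om$ reduces the Fisher--Rao distance to
$$
\FR^2(\rho,\mu) = 4\int_\Om (\sqrt{\rho}-\sqrt{\mu})^2\,dx.
$$

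It remains to pass to the liminf. Convexity of $F$ from (A1), together with the affine lower bound above, places $\cE_2$ in the scope of the classical lower-semicontinuity theorem for convex integral functionals (Ioffe), giving $\cE_2(\hrho)\leq\liminf_n \cE_2(\mu_n)$. For the Fisher--Rao term, $b\mapsto(\sqrt{a}-\sqrt{b})^2$ is convex with second derivative $\tfrac{1}{2}a^{1/2}b^{-3/2}\geq 0$, so $\mu\mapsto\FR^2(\rho,\mu)$ is convex; the elementary estimate $(\sqrt{a}-\sqrt{b})^2\leq|a-b|$ shows it is further strongly continuous on $L^1(\Om)$, hence by Mazur's lemma weakly lower semicontinuous on $L^r(\Om)$. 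Adding the two inequalities gives $\mathcal{J}(\hrho)\leq\liminf_n\mathcal{J}(\mu_n)=\inf\mathcal{J}$, so $\hrho$ is a minimizer. The step I expect to be most delicate is the weak $L^r$ lower semicontinuity of the Fisher--Rao term, which the convexity route above handles without needing to invoke the more general narrow-convergence LSC of \WFR{} recorded in Remark \ref{WFR_metric_remark}.
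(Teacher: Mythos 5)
Your proof is correct and follows the same overall template as the paper's: the direct method, with the same coercivity estimate from (A2) plus Young's inequality giving a uniform $L^r$ bound on a minimizing sequence, hence weak $L^r$ compactness, and then lower semicontinuity of each term. Where you diverge is in how you obtain lower semicontinuity of the Fisher--Rao term along $\mu_n \rightharpoonup \hrho$ in $L^r$. The paper appeals to the joint weak$^*$ lower semicontinuity of $\FR^2(\cdot,\cdot)$ on pairs of measures, cited to Buttazzo's general relaxation theory (\eqref{FR_LSC}). You instead fix $\rho$ and observe that $b\mapsto(\sqrt a-\sqrt b)^2$ is convex (indeed its second derivative is $\tfrac12 a^{1/2}b^{-3/2}\ge0$), so $\mu\mapsto\FR^2(\rho,\mu)$ is a convex integral functional; combining this with the estimate $(\sqrt a-\sqrt b)^2\le|a-b|$, which yields $\|\sqrt{\mu_n}-\sqrt\mu\|_{L^2}^2\le\|\mu_n-\mu\|_{L^1}$ and hence strong $L^1$- (and a fortiori strong $L^r$-) continuity of $\mu\mapsto\FR^2(\rho,\mu)$, Mazur's lemma then upgrades this to weak $L^r$ lower semicontinuity. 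Your route is more elementary and self-contained for this particular one-variable situation, at the cost of first reducing to the absolutely continuous case via \eqref{FR_formula} (which is legitimate, since finiteness of $\cE_2$ along the minimizing sequence forces $\mu_n\in\mathscr K_F$), whereas the paper's citation gives the lower semicontinuity directly at the level of measures without this reduction.
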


\begin{proof}
We note that $F$ is bounded below.
Considering the minimizing sequence of the functional \eqref{minimize_FisherRao}, there exists a sequence $(\rho_n)$ such that
$$
\inf_{\mu \in \cM^+(\Om)} \Big( \cE_2(\mu) + \frac{\FR^2(\rho, \mu)}{2\T} \Big) 
= \lim_{n \ra \infty}  \cE_2(\rho_n) + \frac{\FR^2(\rho, \rho_n)}{2\T},
$$
and $\cE_2(\rho_n) + \FR^2(\rho, \rho_n)/(2\T)$ is uniformly bounded in $n$. Using that $\cE_2$ is bounded below,
\begin{equation}\label{bound1}
|\cE_2(\rho_n)| \leq C 
\end{equation}
for some $C>0$. 
Due to (A1)-(A2), we easily get
$$
\cE_2(\rho_n)=\int_\Om F(\rho_n) \geq F(0) |\Om| + \int_\Om \frac{\bt}{r} \rho_n^r - \al \rho_n.
$$
Applying Young's inequality and H\"{o}lder's inequality, we have
$$
\sup_n ||\rho_n||_{L^r} < \infty,
$$
where we used \eqref{bound1}.
Hence, by weak compactness, up to a subsequence, we have
\begin{equation}\label{r_weak}
\rho_n \wra \hrho \quad \textrm{in} 	\quad L^r(\Om),
\end{equation}

On the other hand, we note that $\cE_2$ is lower semicontinuous with respect to weak convergence in $L^r$ ($r>1$), that is,
\begin{equation}\label{E2_LSC}
\liminf  \cE_2(\mu_n) \geq \cE_2(\mu)
\end{equation}
whenever $\mu_n \wra \mu$ in $L^r(\Om)$.
In fact, this follows from the fact that the functional $\cE_2$ is weak$^*$ lower semicontinuous (see e.g. \cite[Theorem 3.4.1]{B89}).

We also investigate some weak* lower semicontinuity of $\FR^2$. It is known that
\begin{equation}\label{FR_LSC}
\liminf  \FR^2(\mu_n, \nu_n) \geq  \FR^2(\mu, \nu)
\end{equation}
whenever $\mu_n \wsra \mu$ and $\nu_n \wsra \nu$ (see e.g. \cite[Lemma 3.1.6 and Theorem 3.4.1]{B89})

Therefore, we conclude
$$
\inf_{\mu \in \cM^+(\Om)} \Big( \cE_2(\mu) + \frac{\FR^2(\rho, \mu)}{2\T} \Big) 
= \liminf_{n \ra \infty}  \Big( \cE_2(\rho_n) + \frac{\FR^2(\rho, \rho_n)}{2\T} \Big)
\geq \cE_2(\hrho) + \frac{\FR^2(\rho, \hrho)}{2\T},
$$
where we used \eqref{r_weak}  and the lower semicontinuity in \eqref{E2_LSC} and \eqref{FR_LSC}. Thus, $\hrho$ is a minimizer of \eqref{minimize_FisherRao}.
\end{proof}

\vspace{3mm}
We recall an Euler-Lagrange equation for \eqref{argmin2}, introduced in \cite[Subsection 4.2]{GM17}. 
In the absence of a precise reference with sufficient conditions, we prove the following lemma for clarity, following the arguments in \cite[Subsection 4.2]{GM17}.

\begin{lemma}\label{lemma_EL2}
Let $\rho \in \cM^+(\Om)$ with $\cE_2(\rho)<\infty$. Suppose that $F$ satisfies the assumptions \emph{(A1)}-\emph{(A3)}.
If $\hrho$ is a minimizer obtained from \eqref{argmin2} for the given datum $\rho$, then
\begin{equation}\label{Gallouet}
\int_\Om (\sqrt{\hrho} - \sqrt{\rho}) \sqrt{\hrho} \psi = -\frac{\T}{2} \int_\Om F'(\hrho) \hrho \psi
 \qquad   \forall \psi \in L^\infty(\Om).
\end{equation}
\end{lemma}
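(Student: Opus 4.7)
The plan is to derive the Euler-Lagrange equation by a standard multiplicative (or reaction-type) perturbation that is natural for the Fisher-Rao geometry and is inspired by the argument in \cite{GM17}. Since $\mathcal{E}_2(\hat\rho)+\mathrm{FR}^2(\rho,\hat\rho)/(2\tau)<\infty$, both $\rho$ and $\hat\rho$ are absolutely continuous with respect to $\mathcal{L}^d|_\Omega$, so I may identify them with their densities. Given $\psi\in C^\infty(\overline{\Omega})$, I will choose, for $\epsilon\in(-\epsilon_0,\epsilon_0)$ with $\epsilon_0\|\psi\|_\infty<1$, the admissible competitor
\[
\hat\rho_\epsilon:=(1+\epsilon\psi)^2\hat\rho\in\mathcal{M}^+(\Omega),
\]
so that $\sqrt{\hat\rho_\epsilon}=(1+\epsilon\psi)\sqrt{\hat\rho}$ pointwise. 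The function $f(\epsilon):=\mathcal{E}_2(\hat\rho_\epsilon)+\mathrm{FR}^2(\rho,\hat\rho_\epsilon)/(2\tau)$ attains its minimum at $\epsilon=0$ by the minimality of $\hat\rho$, so once I show $f$ is differentiable at $0$ the condition $f'(0)=0$ will give the desired identity.

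For the Fisher-Rao piece, formula \eqref{FR_formula} with $\lambda=\mathcal{L}^d|_\Omega$ yields
\[
\mathrm{FR}^2(\rho,\hat\rho_\epsilon)=4\int_\Omega\bigl(\sqrt{\rho}-(1+\epsilon\psi)\sqrt{\hat\rho}\bigr)^2\,dx,
\]
which is a quadratic polynomial in $\epsilon$ with $L^1$ coefficients (since $\sqrt{\rho},\sqrt{\hat\rho}\in L^2$); differentiating at $\epsilon=0$ gives
\[
\frac{d}{d\epsilon}\bigg|_{0}\mathrm{FR}^2(\rho,\hat\rho_\epsilon)=-8\int_\Omega(\sqrt{\rho}-\sqrt{\hat\rho})\sqrt{\hat\rho}\,\psi\,dx=8\int_\Omega(\sqrt{\hat\rho}-\sqrt{\rho})\sqrt{\hat\rho}\,\psi\,dx.
\]
For the energy piece, I will differentiate under the integral sign. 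By convexity of $F$ and the mean value theorem,
\[
\frac{F((1+\epsilon\psi)^2\hat\rho)-F(\hat\rho)}{\epsilon}=F'(s_\epsilon)\,\frac{(1+\epsilon\psi)^2-1}{\epsilon}\,\hat\rho,
\]
with $s_\epsilon$ between $\hat\rho$ and $(1+\epsilon\psi)^2\hat\rho$; pointwise it converges to $2F'(\hat\rho)\psi\hat\rho$. The main technical point—and the only real obstacle—is the dominated-convergence step: the factor $\epsilon^{-1}[(1+\epsilon\psi)^2-1]=2\psi+\epsilon\psi^2$ is bounded uniformly in $\epsilon$, and $|s_\epsilon|\leq M\hat\rho$ with $M:=(1+\epsilon_0\|\psi\|_\infty)^2$; monotonicity of $F'$ then yields $|F'(s_\epsilon)|\hat\rho\leq(|F'(0)|+|F'(M\hat\rho)|)\hat\rho$, which lies in $L^1(\Omega)$ precisely by assumption (A3) applied to $u=\hat\rho\in L^1(\Omega)$ with $\mathcal{E}_2(\hat\rho)<\infty$. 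The dominated convergence theorem thus gives
\[
\frac{d}{d\epsilon}\bigg|_{0}\mathcal{E}_2(\hat\rho_\epsilon)=2\int_\Omega F'(\hat\rho)\hat\rho\,\psi\,dx.
\]

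Combining the two derivatives, the Euler-Lagrange equation $f'(0)=0$ reads
\[
2\int_\Omega F'(\hat\rho)\hat\rho\,\psi\,dx+\frac{4}{\tau}\int_\Omega(\sqrt{\hat\rho}-\sqrt{\rho})\sqrt{\hat\rho}\,\psi\,dx=0,
\]
which, after dividing by $4/\tau$, is exactly \eqref{Gallouet}. Since $\psi$ was an arbitrary smooth test function, this completes the argument; note that the admissibility of $\hat\rho_\epsilon$ required no sign restriction on $\psi$, because $(1+\epsilon\psi)^2\geq 0$ for every $\epsilon$, which is what makes the multiplicative perturbation the right object to use in the Fisher-Rao setting.
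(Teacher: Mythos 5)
Your proof is correct and follows essentially the same strategy as the paper's: perturb $\hrho$ multiplicatively so as to stay in $\cM^+(\Om)$, compute the first variation of $\cE_2$ and of $\FR^2$ via \eqref{FR_formula}, and justify the limit for $\cE_2$ by dominated convergence using (A1) and (A3). The only (cosmetic) difference is the choice $(1+\ep\psi)^2\hrho$ in place of the paper's $\hrho e^{\ep\psi}$; the former lets you use a two-sided derivative directly, whereas the paper takes $\ep\downarrow 0$ and then replaces $\psi$ by $-\psi$, and both lead to the same Euler--Lagrange identity.
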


\begin{proof}
Let $\hrho$ be a minimizer of the problem \eqref{argmin2}. Given $0<\ep<1$ and $\psi \in L^\infty(\Om)$, define $\hrho^\ep = \hrho e^{\ep \psi}$.
Due to the minimality of $\hrho$, we have
$$
\cE_2(\hrho) + \frac{\FR^2(\hrho, \rho)}{2\T}
\leq \cE_2(\hrho^\ep) + \frac{\FR^2(\hrho^\ep, \rho)}{2\T}.
$$
The assumption (A1) implies that for all $\ep \in (0,1)$,
$$
\bigg| \frac{F(\hrho e^{\ep \psi}) - F(\hrho)}{\ep}  \bigg|
\leq \hrho e^{|\psi|}  |\psi| \big( |F'(0)| + |F'(\hrho e^{|\psi|})|  \big).
$$
Since $|\psi(x)| \leq B$ for some $B>0$, by the assumption (A3) and the dominated convergence theorem, we see that $\big(\cE_2(\hrho^\ep) -\cE_2(\hrho) \big)/\ep \ra \int_\Om F'(\hrho) \hrho \psi$ as $\ep \downarrow 0$.
Using \eqref{FR_formula} with $\lam=\cL^d \big|_{\Om}$, we see that $\big( \FR^2(\hrho^\ep, \rho) - \FR^2(\hrho, \rho) \big) /\ep \ra 
4 \int_\Om \sqrt{\hrho} (\sqrt{\hrho} - \sqrt{\rho} ) \psi $. 
Replacing $\psi$ with $-\psi$ gives \eqref{Gallouet}.
\end{proof}

\vspace{3mm}
Throughout the paper, for any $M>0$ we define
\begin{equation}\label{definition_eta}
\eta_{_M}:=\Big( \frac{\al + M}{\bt} \Big)^{\frac{1}{r-1} }.
\end{equation}  
We will denote by $M_* > 0$ a constant whose value will be specified in Lemma \ref{calculus}, and write \(\eta_{M_*}\) for the corresponding quantity.
In particular, the $L^\infty$-norm of the minimizers arising in the JKO scheme can be bounded in terms of $||\rho_0||_{L^\infty}$ and $\eta_{M_*}$ (see Proposition \ref{surprise} for details).

We often use the notation $\eta$ instead of $\eta_{_M}$  if there is no confusion. Moreover, for notational simplicity, we often use $|| \cdot ||_{\infty}$ instead of $|| \cdot ||_{L^\infty}$

The following lemma tells us how much the $L^\infty$ norm decreases when passing through the Fisher-Rao step.
\begin{lemma}\label{essential_first_estimate}
Suppose that \emph{(A1)-(A3)} hold. Let $\rho \in L^\infty(\Om) \cap \cM^+(\Om)$ and let $\hrho$ be a minimizer obtained by \eqref{argmin2} with the given datum $\rho$. If $M>0$ and  
if $\al \T<1$, then we have
\begin{equation}\label{essential_key_estimate}
||\hrho||_{L^\infty} \leq \max(\frac{ ||\rho||_{L^\infty} }{ 1+\T M } ,\eta)
\end{equation}
with $\eta=\eta_M$ specified in \eqref{definition_eta}.
\end{lemma}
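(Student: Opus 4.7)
The plan is to extract a pointwise Euler--Lagrange identity from \eqref{Gallouet}, rewrite it as a fixed-point relation between $\hrho$ and $\rho$, and then split into cases according to whether the pointwise value of $\hrho$ exceeds the threshold $\eta$.

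First I would use \eqref{Gallouet} with arbitrary $\psi\in C^\infty(\OM)$ and a standard density argument to obtain the pointwise identity
\[
(\sqrt{\hrho}-\sqrt{\rho})\sqrt{\hrho}
=-\tfrac{\T}{2}F'(\hrho)\,\hrho
\qquad\text{a.e.\ in }\Om.
\]
On $\{\hrho>0\}$, dividing by $\sqrt{\hrho}$ and squaring yields
\[
\hrho\Bigl(1+\tfrac{\T}{2}F'(\hrho)\Bigr)^{2}=\rho\qquad\text{a.e.}
\]
Assumption (A2) together with $\al\T<1$ ensures $1+\tfrac{\T}{2}F'(\hrho)\geq 1-\tfrac{\T\al}{2}>\tfrac12>0$, so the factor is strictly positive and the manipulation is legitimate.

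Next I would analyse the superlevel set $\{\hrho>\eta\}$. There (A2) and the definition \eqref{definition_eta} of $\eta=\eta_{_M}$ give $F'(\hrho)\geq \bt\hrho^{r-1}-\al\geq \bt\eta^{r-1}-\al=M$. Plugging this lower bound into the fixed-point identity and invoking the elementary inequality $(1+\tfrac{\T M}{2})^{2}\geq 1+\T M$ yields $(1+\T M)\hrho\leq\rho$ wherever $\hrho>\eta$. To pass to the $L^\infty$ norm: if $||\hrho||_{L^\infty}>\eta$, then for every sufficiently small $\ep>0$ the set $\{\hrho>||\hrho||_{L^\infty}-\ep\}$ has positive measure and is contained in $\{\hrho>\eta\}$, so $||\hrho||_{L^\infty}-\ep\leq ||\rho||_{L^\infty}/(1+\T M)$; letting $\ep\downarrow 0$ proves the first branch of \eqref{essential_key_estimate}. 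The alternative branch $||\hrho||_{L^\infty}\leq\eta$ is immediate from the hypothesis.

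The main obstacle I foresee is making the passage from the integral identity \eqref{Gallouet} to the pointwise equality fully rigorous: one needs $F'(\hrho)\hrho\in L^1(\Om)$ before invoking the fundamental lemma of the calculus of variations. This follows from (A3) once we know $\hrho\in L^1(\Om)$ with $\cE_2(\hrho)<\infty$; both are automatic since $\hrho$ is a minimizer competing against the datum $\rho$. Everything else is algebra together with a simple measure-theoretic argument passing to the essential supremum.
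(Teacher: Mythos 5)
Your proof is correct and follows essentially the same route as the paper: both extract the pointwise Euler--Lagrange relation $\sqrt{\rho}=\sqrt{\hrho}\bigl(1+\tfrac{\T}{2}F'(\hrho)\bigr)$ from \eqref{Gallouet}, square it, invoke (A2) and the definition of $\eta_M$ to bound $F'(\hrho)$ from below by $M$ on the relevant set, and conclude $||\rho||_{L^\infty}\ge(1+\T M)||\hrho||_{L^\infty}$. The only cosmetic difference is in how the passage to the $L^\infty$ norm is made: you localize on superlevel sets $\{\hrho>||\hrho||_{L^\infty}-\ep\}$ and let $\ep\downarrow0$, whereas the paper takes the essential supremum of the whole pointwise inequality and uses that $s\mapsto(1-\al\T)s+\T\bt s^{r}$ is monotone; both are valid.
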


\begin{proof}
If $||\hrho||_{\infty} \leq \eta$, there is nothing to prove.
Hence, we assume 
\begin{equation}\label{good_idea}
    ||\hrho||_{\infty}> \eta=\Big( \frac{\al + M}{\bt} \Big)^{\frac{1}{r-1} } \quad \textrm{or} \quad ||\hrho||_\infty^{r-1}>\frac{\al +M}{\bt}.
\end{equation}

\noindent The Euler-Lagrange equation \eqref{Gallouet} implies
\begin{equation}\label{Gallouet_pointwise}
\sqrt{\hrho}- \sqrt{\rho} = -\frac{\T}{2} \sqrt{\hrho}F'(\hrho)
\qquad  \hrho \textrm{-a.e.}
\end{equation}
Clearly, 
\begin{equation}\label{rho_half_expression}
    \rho =\hrho+ 
    \T \hrho F'(\hrho) + \frac{\T^2}{4} \hrho(F'(\hrho))^2  
    \qquad  \hrho \textrm{-a.e.,}
\end{equation}
which yields for a.e. $x \in \Om$,
\begin{align}
 \rho \geq \hrho+ 
    \T \hrho F'(\hrho) + \frac{\T^2}{4} \hrho(F'(\hrho))^2  
 \geq
    \hrho  + \T \hrho (\bt \hrho^{r-1} -\al)  \geq (1-\al \T)\hrho \geq0.                
\label{implicit_form_positivity}
\end{align}

\noindent Hence,
\begin{align}
    ||\rho||_{\infty} 
&\geq    ||\hrho  + \T \hrho (\bt \hrho^{r-1} -\al)||_{\infty}
= ||\hrho ||_{\infty} + \T ||\hrho ||_{\infty}( \bt||\hrho ||_{\infty}^{r-1}-\al) 
\nn \\
&\geq ||\hrho ||_{\infty} + \T M ||\hrho ||_{\infty} 
= (1+\T M )||\hrho ||_{\infty} \nn
\end{align}
where we have used \eqref{implicit_form_positivity} and \eqref{good_idea}.
Thus, we obtain \eqref{essential_key_estimate}.
\end{proof}

\vspace{3mm}
The following lemma shows that the minimizer of the Fisher-Rao step preserves the support, the Lipschitz condition, and the strict positivity of the minimizer of the Wasserstein step.
\begin{lemma}
\label{preserved_supports}
Suppose that Assumption \ref{assumptionsUF} holds.
Given $\rho_0 \in L^\infty(\Om)$, let $\rho$ be a minimizer obtained by \eqref{argmin} with the initial datum $\rho_0$. Similarly, let $\hrho$ be a minimizer given by \eqref{argmin2} with the given datum $\rho$. 
Then
\begin{itemize}
\item[\emph{(i)}] 
For $\T \in (0, \frac{2}{\al})$,
$$
    \emph{supp}  (\rho) = \emph{supp}  (\hrho).
$$

\item[\emph{(ii)}] 
If $\T \in (0, \frac{2}{\al})$ and if $\inf_{t>0} tU''(t)>0$, then
$$
\emph{supp}  (\rho) = \emph{supp}  (\hrho) = \Om.
$$
Moreover,
if $\T \in (0,\frac{1}{2\al})$, then $\hrho$ is Lipschitz and $\inf_\Om \hrho >0$.
\end{itemize}
\end{lemma}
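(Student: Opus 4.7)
The plan is to extract pointwise information from the Euler-Lagrange equation of Lemma \ref{lemma_EL2} and complement it with a one-sided variation in the set where that equation degenerates. For part (i), from \eqref{Gallouet} applied against arbitrary $\psi \in C^\infty(\OM)$ I would first obtain the pointwise identity
\begin{equation*}
\sqrt{\hrho}\bigl(\sqrt{\hrho} - \sqrt{\rho}\bigr) = -\tfrac{\T}{2} F'(\hrho)\,\hrho \qquad \text{a.e.\ in } \Om,
\end{equation*}
which on $\{\hrho>0\}$ rearranges to $\rho = \hrho\bigl(1 + \tfrac{\T}{2}F'(\hrho)\bigr)^2$. Since assumption (A2) yields $F'(s) \geq -\al$, the factor $1 + \tfrac{\T}{2}F'(\hrho) \geq 1 - \tfrac{\T\al}{2}$ is strictly positive whenever $\T < 2/\al$, so $\rho > 0$ on $\{\hrho>0\}$; this gives $\text{supp}(\hrho) \subseteq \text{supp}(\rho)$.

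For the reverse inclusion, which the Euler-Lagrange relation does not see, I would argue by contradiction. Assume $A \subseteq \{\hrho=0\}$ has positive Lebesgue measure with $\rho > 0$ on $A$, and test the minimality of $\hrho$ against the admissible perturbation $\mu_\ep := \hrho + \ep \mathbf{1}_A \in \cM^+(\Om)$ for small $\ep>0$. Using $F \in C^1[0,\infty)$ together with the explicit formula \eqref{FR_formula} (with $\lam = \cL^d|_\Om$), one computes
\begin{equation*}
\cE_2(\mu_\ep) - \cE_2(\hrho) = |A|\bigl(F(\ep) - F(0)\bigr) = O(\ep), \qquad \FR^2(\rho,\mu_\ep) - \FR^2(\rho,\hrho) = -8\sqrt{\ep}\int_A \sqrt{\rho}\,dx + 4\ep|A|.
\end{equation*}
Dividing the total increment by $\sqrt{\ep}$ and sending $\ep \downarrow 0$, minimality forces $\int_A \sqrt{\rho}\,dx \leq 0$, contradicting $\rho > 0$ on $A$; hence $\text{supp}(\rho) \subseteq \text{supp}(\hrho)$ and (i) is complete.

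For part (ii), since $\inf_{t>0} tU''(t)>0$, Proposition \ref{CS_theorem}(ii) furnishes $\inf_\Om \rho > 0$, so $\text{supp}(\rho)=\Om$, and by (i) the same holds for $\hrho$. To upgrade to Lipschitz regularity and strict positivity, I would introduce the scalar function $h(s) := s\bigl(1 + \tfrac{\T}{2}F'(s)\bigr)^2$ on $[0,\infty)$ and differentiate:
\begin{equation*}
h'(s) = \bigl(1 + \tfrac{\T}{2}F'(s)\bigr)\Bigl[\bigl(1 + \tfrac{\T}{2}F'(s)\bigr) + \T s F''(s)\Bigr].
\end{equation*}
Under $\T < 1/(2\al)$, assumption (A1)-(A2) (specifically $F'' > 0$ and $F' \geq -\al$) forces both bracketed factors to be bounded below by $1 - \tfrac{\T\al}{2} > \tfrac{3}{4}$, so $h$ is a strictly increasing $C^1$ homeomorphism on bounded subintervals of $[0,\infty)$ whose inverse is Lipschitz on bounded sets. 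The pointwise relation $\rho = h(\hrho)$ from part (i) then inverts to $\hrho = h^{-1}(\rho)$ a.e.; since Proposition \ref{CS_theorem}(ii) yields $\rho$ Lipschitz on $\OM$ with $\rho \in [\rho_{\min},\rho_{\max}]$ and $\rho_{\min}>0$, composition with $h^{-1}$ produces a Lipschitz representative of $\hrho$ on $\OM$ bounded below by $h^{-1}(\rho_{\min}) > 0$.

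The main obstacle is the reverse support inclusion in (i): the Euler-Lagrange identity is vacuous on $\{\hrho=0\}$ and therefore carries no direct information about $\rho$ there. The mismatch in scalings—the Fisher-Rao increment behaves like $\sqrt{\ep}$ while the entropy increment is $O(\ep)$—is precisely what drives the one-sided variation with $\mu_\ep = \hrho + \ep \mathbf{1}_A$ to a contradiction and rules out $\rho$ being positive off the support of $\hrho$.
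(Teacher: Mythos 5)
Your proof is correct, and for the reverse inclusion in (i) and all of (ii) it is essentially the paper's argument (the paper uses $J_\T(s) = s + \T s F'(s) + \tfrac{\T^2}{4}s(F'(s))^2$ which is exactly your $h$). The one genuine difference is the forward inclusion $\mathrm{supp}(\hrho) \subseteq \mathrm{supp}(\rho)$: you extract it directly from the pointwise Euler--Lagrange identity of Lemma \ref{lemma_EL2}, deducing $\rho = \hrho\bigl(1 + \tfrac{\T}{2}F'(\hrho)\bigr)^2 > 0$ a.e.\ on $\{\hrho>0\}$ since $F' \geq -\al$ and $\T < 2/\al$ keep the squared factor strictly positive. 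The paper instead runs a separate contradiction argument: it fixes a hypothetical $z \in \mathrm{supp}(\hrho)\setminus\mathrm{supp}(\rho)$, tests minimality against the downward perturbation $\hrho^\ep := \hrho - \ep\,\hrho|_{B_r(z)}$, and uses the first-order inequality together with $F' \geq -\al$ to force $\al \geq 2/\T$. Both are sound; your route is a bit more economical because it simply reuses the already-established EL equation rather than re-deriving a one-sided variational inequality, at the small cost of having to note explicitly that the weak identity \eqref{Gallouet} upgrades to an a.e.\ pointwise one (the integrands are in $L^1$ by (A3) and the $L^r$/$L^\infty$ bounds on $\hrho,\rho$) and that ``$\rho>0$ a.e.\ on $\{\hrho>0\}$'' implies the support inclusion for absolutely continuous measures. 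Your reverse-inclusion test with $\mu_\ep = \hrho + \ep\mathbf{1}_A$ over an arbitrary measurable $A \subseteq \{\hrho=0\}\cap\{\rho>0\}$ is the same scaling argument as the paper's (which uses $A = B_r(z)$), exploiting the $\sqrt{\ep}$-vs.-$\ep$ mismatch between the Fisher--Rao and entropy increments.
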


\begin{proof}
(i)
Since $F'\geq -\al$, by using \eqref{Gallouet_pointwise}, we have
\begin{equation*}
\sqrt{\rho} = \sqrt{\hrho} \big(1 +\frac{\T}{2} F'(\hrho) \big) \geq  \sqrt{\hrho} \big(1 -\frac{\al \T}{2}  \big)
\qquad  \hrho \textrm{-a.e.,}
\end{equation*}
which implies $\textrm{supp} (\hrho) \subset \textrm{supp} (\rho)$.
To prove that $\textrm{supp} (\hrho) = \textrm{supp} (\rho)$ by contradiction, we assume $\textrm{supp} (\hrho) \subsetneq \textrm{supp} (\rho)$, and hence we can find $z\in \Om$ so that
$$
z \in \textrm{supp} (\rho) \backslash \textrm{supp} (\hrho) \neq \emptyset.
$$

\noindent
Then there exists $r^*>0$ such that $\rho(B_r(z)) >0$ and  $\hrho(B_r(z)) =0$ for all $ 0<r<r^*$. If we define $\hrho^{\ep}:=\hrho + \ep \textbf{1}_{B_r(z)}$ with $\ep>0$,
then by the minimality of $\hrho$ we have
\begin{equation}\label{FR_minimality}
\cE_2 (\hrho) +\frac{1}{2\T} \FR^2(\rho , \hrho)
\leq
\cE_2 (\hrho^{\ep}) + \frac{1}{2\T} \FR^2(\rho, \hrho^{\ep}).
\end{equation}
Note that
$$
\frac{1}{4} \big(\FR^2(\rho, \hrho^{\ep}) - \FR^2(\rho, \hrho)\big)
=\int_{B_r(z)} |\sqrt{\rho} - \sqrt{\hrho^{\ep}}|^2 -  |\sqrt{\rho} - \sqrt{\hrho}|^2
$$ 
$$
=\int_{B_r(z)} |\sqrt{\rho} - \sqrt{\ep}|^2
- |\sqrt{\rho} |^2 = \int_{B_r(z)} -2\sqrt{\ep \rho} + \ep.
$$
From \eqref{FR_minimality}, we obtain
$$
0 \leq \int_{B_r(z)} F(\hrho^\ep) - F(\hrho) +\frac{2}{\T} \int_{B_r(z)} \Big(-2\sqrt{\ep \rho} + \ep \Big).  
$$
Dividing by $\sqrt{\ep}$ and letting $\ep \downarrow 0$, we get a contradiction because $F'(0+)$ exists and $\int_{B_r(z)} \sqrt{\rho} >0$.
Thus, we obtain the desired result.

(ii) Following the proof of \cite[Lemma 8.6]{S15}, we can see that \textrm{supp}$(\rho) = \Om$. The assumption $\inf_{t>0} tU''(t)>0$ is necessary here to require $U'(0+)=-\infty$.  Due to (i), we conclude that supp$(\hrho)= \Om$.  

To prove the second statement, from (ii) of Proposition \ref{CS_theorem}, we recall that
$\rho$ is Lipschitz and $\essinf_\Om \rho >0$. 
Using Lemma \ref{lemma_EL2} and the fact that supp$(\hrho)=\Om$, we obtain
\begin{equation}\label{Gallouet_a.e.}
\sqrt{\hrho}- \sqrt{\rho} = -\frac{\T}{2} \sqrt{\hrho}F'(\hrho) \qquad \textrm{a.e. $x \in \Om$.}
\end{equation}
Similar to \eqref{rho_half_expression}, we get
\begin{equation}\label{Gallouet_square_a.e.}
\rho = J_\T(\hrho), \quad  \textrm{a.e. } x \in \Om,
\end{equation}
where $J_\T(s) := s+ \T s F'(s) + \frac{\T^2}{4}s (F'(s))^2$, $s\geq0$. Using (A1) and (A2), we see that $J_\T' (s) \geq 1-\al \T +\T s F''(s)(1-\frac{\al \T}{2})$. Hence, if $0<\T< \frac{1}{2\al}$, then
$\inf_{s > 0} J_\T' \geq 1/2$. In this case, since $J_\T$ is strictly increasing,  we can assume $\hrho$ is continuous (by redefining $\hrho := J_\T^{-1}(\rho))$. 
Therefore, we have
$$
||\hrho||_{\textrm{Lip}} \leq \frac{1}{\inf_{s>0} J_\T'} ||\rho||_{\textrm{Lip}}
 \leq 2 ||\rho||_{\textrm{Lip}}.
$$

It thus remains to show $\inf_\Om \hrho >0$. Since supp$(\hrho)=\Om$ and $\hrho$ is Lipschitz, from \eqref{Gallouet_square_a.e.}, we have
\begin{equation}\label{nth_EL_squaring}
    \rho =\hrho+ 
    \T \hrho F'(\hrho) + \frac{\T^2}{4} \hrho (F'(\hrho))^2  \quad \textrm{for all $x\in \Om$}.
\end{equation}
If $\inf_\Om \hrho =0$, then there exists $x_0 \in \OM$ such that $\hrho(x_0)=0$ due to the regularity of $\hrho$. Then, \eqref{nth_EL_squaring} implies a contradiction to the fact that  $\essinf_\Om \rho >0$ (if $x_0 \in \p \Om$, choose a sequence $(x_k)$ converging to $x_0$), and thus
we obtain the desired result.
\end{proof}

\begin{remark}
Let $\rho,\hrho$ be the minimizers in (ii) of Lemma \ref{preserved_supports}. Then it is possible to obtain 
\begin{equation}\label{Gallouet!}
\int_\Om (\hrho - \rho) \phi = -\frac{\T}{2} \int_\Om F'(\hrho)\sqrt{\hrho} \big(\sqrt{\hrho} + \sqrt{\rho } \big) \phi \qquad   \forall \phi \in C^\infty(\OM),
\end{equation}
which is a variant of Lemma \ref{lemma_EL2}. Indeed, since $\rho$ and $\hrho$ are Lipschitz, multiplying \eqref{Gallouet_a.e.} by $\sqrt{\hrho}+ \sqrt{\rho}$
and taking $\phi \in C^\infty(\OM)$ as test functions, we get \eqref{Gallouet!}.

\end{remark}

\vspace{5mm}
\section{Approximate weak solutions: Construction and $L^\infty$-estimates}\label{section_lemmas}

In this section, we construct an approximate weak solution via splitting JKO scheme. More precisely, we alternately repeat the Wasserstein and Fisher-Rao JKO steps;
$$
\rho_0 \xrightarrow{\text{$W_2^2$}} \rho_{1/2}^\T 
\xrightarrow{\text{$\FR^2$}} \rho_1^\T
\xrightarrow{\text{$W_2^2$}} \rho_{3/2}^\T
\xrightarrow{\text{$\FR^2$}} \rho_2^\T 
\xrightarrow{\text{$W_2^2$}}...
\xrightarrow{\text{$\FR^2$}} \rho_n^\T
\xrightarrow{\text{$W_2^2$}}
\rho_\half
\xrightarrow{\text{$\FR^2$}} \rho_\one
\xrightarrow{\text{$W_2^2$}}...
$$
i.e., we iteratively define
\begin{equation}\label{xrightarrow}
\rho_\half \in \argmin_{\mu \in \cS_n} \Big\{ \cE_{1}(\mu) +  \frac{W_2^2(\rho_n^\T,\mu)}{\T} \Big\}, \quad
\rho_\one \in \argmin_{\mu \in \cM^+(\Om)} \Big\{ \cE_{2}(\mu) +  \frac{\FR^2(\rho_\half,\mu)}{2\T} \Big\},
\end{equation}
where $\cS_n= \{   \mu \in \cM^+(\Om) \cap    L^\infty(\Om) : \mu \leq \frac{1}{\T \chi} , ~|\mu|=|\rho_n^\T | \} $.

Then, for each $n\geq0$, we define
\begin{align}
\tilde{\rho}^\T(0)&:=\rho_0, 
\qquad \quad     \rho^\T(0):=\rho_0,      \nn \\
\tilde{\rho}^\T(t)&:=\rho_{n+1/2}^\T, \quad \rho^\T(t):=\rho_{n+1}^\T, \quad \textrm{whenever} \quad t \in \big( n\T ,(n+1)\T].           \label{two_curves_definition}
\end{align}

It will be shown that the curves $\tilde{\rho}^\T(t), \rho^\T(t)$ approximate weak solutions of \eqref{KSL}.
To this end, we start by analyzing the minimizers in \eqref{xrightarrow}.

\vspace{3mm}
The following lemma shows that the splitting schemes are well-defined.
To repeat the scheme \eqref{argmin} and \eqref{argmin2} alternately and infinitely, we need the following $L^1$-estimate of $\rho_n^\T$ because of the density constraint of \eqref{argmin}.

\begin{lemma}\label{L^1_bound}
Suppose that Assumption \ref{assumptionsUF} holds. Let $\rho_0 \in L^\infty(\Om)$. Then, there exists $\xi>0$ \emph{(}depending on $\al, \bt, r, |\Om|$, $||\rho_0||_{L^1(\Om)}$\emph{)} such that if $0<\T<\frac{|\Om|}{\xi \chi}$, then the minimizers $\rho_\half$ and $\rho_\one$ in \eqref{xrightarrow} exist for each $n \in \N \cup \{ 0\}$, and
$$
||\rho_n^\T||_{L^1(\Om)} \leq \xi.
$$
\end{lemma}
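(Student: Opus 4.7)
The claim is proved by induction on $n$, simultaneously producing the minimizers $\rho_{n+1/2}^\T$ and $\rho_{n+1}^\T$ and propagating the $L^{1}$ bound. The two quantitative ingredients are: the Wasserstein step preserves mass (built into the constraint $|\mu|=|\rho_n^\T|$ defining $\cS_n$), and the Fisher--Rao step strictly contracts mass thanks to the logistic penalization $F'(s)\geq \bt s^{r-1}-\al$ of (A2).

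\textbf{Mass-contraction via Euler--Lagrange.} Given a minimizer $\hrho=\rho_{n+1}^\T$ of the Fisher--Rao step with datum $\rho=\rho_{n+1/2}^\T$, the Euler--Lagrange identity of Lemma \ref{lemma_EL2}, squared as in \eqref{rho_half_expression}, gives $\rho = \hrho + \T \hrho F'(\hrho) + \tfrac{\T^{2}}{4}\hrho\bigl(F'(\hrho)\bigr)^{2}$ $\hrho$-a.e. By Lemma \ref{preserved_supports}(i) (valid for $\T<2/\al$), $\mathrm{supp}(\hrho)=\mathrm{supp}(\rho)$, so both sides vanish on the common complement and the identity extends to a.e.\ $x\in\Om$. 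Integrating over $\Om$, discarding the nonnegative quadratic term, and using (A2) together with H\"older's inequality ($\int_\Om \mu^{r}\geq |\Om|^{1-r}|\mu|^{r}$) yields
\begin{equation*}
|\rho_{n+1/2}^\T| \;\geq\; (1-\al\T)\,|\rho_{n+1}^\T| + \T\bt\!\int_\Om (\rho_{n+1}^\T)^{r} \;\geq\; h_\T\!\bigl(|\rho_{n+1}^\T|\bigr), \qquad h_\T(x):=(1-\al\T)\,x + \frac{\T\bt}{|\Om|^{r-1}}\,x^{r}.
\end{equation*}

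\textbf{Choice of $\xi$ and the induction.} Set $\xi:=\max\bigl\{\|\rho_0\|_{L^{1}},\,(\al/\bt)^{1/(r-1)}|\Om|\bigr\}$ (interpreting the second term as $0$ when $\al=0$). A direct computation shows $h_\T(\xi)\geq \xi$ for every $\T>0$, while $h_\T$ is strictly increasing on $[0,\infty)$ provided $\al\T<1$ (automatic if $\al=0$; otherwise absorbed into the stated threshold by enlarging $\xi$ if necessary). The base case $|\rho_0|\leq\xi$ holds by definition. For the inductive step, the hypothesis $\T<|\Om|/(\xi\chi)$ together with $|\rho_n^\T|\leq\xi$ gives $|\rho_n^\T|/|\Om|\leq\xi/|\Om|<1/(\T\chi)$, so the uniform density $|\rho_n^\T|/|\Om|$ lies in $\cS_n$; hence $\cS_n\neq\emptyset$ and Proposition \ref{CS_theorem}(i) produces $\rho_{n+1/2}^\T$ (mass-preserving), after which Lemma \ref{existence_FR} produces $\rho_{n+1}^\T$. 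Monotonicity of $h_\T$ together with $h_\T(\xi)\geq\xi$ then closes the induction: $h_\T(|\rho_{n+1}^\T|)\leq|\rho_{n+1/2}^\T|=|\rho_n^\T|\leq\xi\leq h_\T(\xi)$, whence $|\rho_{n+1}^\T|\leq\xi$.

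\textbf{Main obstacle.} The delicate point is upgrading the weak, $\hrho$-a.e.\ Euler--Lagrange identity of Lemma \ref{lemma_EL2} to an honest pointwise identity on all of $\Om$ that can be integrated against Lebesgue measure. This requires simultaneously squaring (legitimate only on $\mathrm{supp}(\hrho)$, where $1+\tfrac{\T}{2}F'(\hrho)\geq 0$ under $\al\T\leq 2$) and invoking the support equality of Lemma \ref{preserved_supports}(i) to control the complement. Once this is in hand, the remaining steps — Hölder, choice of $\xi$, and monotonicity of $h_\T$ — are routine small-$\T$ bookkeeping compatible with the JKO regime.
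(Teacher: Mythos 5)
Your proof follows the same core strategy as the paper's: square the Fisher--Rao Euler--Lagrange identity, integrate, use (A2) and H\"older, and then iterate to propagate an $L^1$ bound through the splitting scheme. There are, however, two flaws worth flagging.

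First, the invocation of Lemma~\ref{preserved_supports}(i) to upgrade the $\hrho$-a.e.\ identity to a Lebesgue-a.e.\ statement is unnecessary and surreptitiously introduces the hypothesis $\T<2/\al$, which the lemma does not assume. You only need the \emph{inequality}: off $\mathrm{supp}(\hrho)$ the right-hand side $\hrho + \T\hrho F'(\hrho) + \tfrac{\T^{2}}{4}\hrho\bigl(F'(\hrho)\bigr)^{2}$ vanishes while $\rho\geq 0$, so the inequality holds a.e.\ unconditionally — this is exactly how the paper reads \eqref{rho_half_expression} when passing to \eqref{L1control1}.

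Second, and more seriously, your closing step needs $h_\T$ to be increasing, for which you require $\al\T<1$, and the proposed patch — ``enlarging $\xi$'' so that $\T<|\Om|/(\xi\chi)$ forces $\al\T<1$ — does not work: it would require $\xi\geq\al|\Om|/\chi$, a $\chi$-dependence excluded by the lemma's statement that $\xi$ depends only on $\al,\bt,r,|\Om|,\|\rho_0\|_{L^1}$. The paper sidesteps this entirely by first applying Young's inequality $\int_\Om\rho_1\leq\ep\bigl(\int_\Om\rho_1\bigr)^r+C_\ep$ to \emph{linearize} the recurrence into $\|\rho_{n+1}\|_{L^1}\leq\bigl(\|\rho_n\|_{L^1}+\T B_\ep\bigr)/(1+\T A_\ep)$, which is contractive for all $\T>0$ once $\ep$ is chosen so that $A_\ep>0$; the constraint $\al\T<1$ never arises. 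Your nonlinear recurrence could in fact be salvaged without the monotonicity claim: since $h_\T(0)=0$, $h_\T(\xi)\geq\xi>0$, and $h_\T'(x)=(1-\al\T)+r\T\bt|\Om|^{1-r}x^{r-1}$ changes sign at most once, one checks that $\{x\geq0:h_\T(x)\leq h_\T(\xi)\}=[0,\xi]$ regardless of the sign of $1-\al\T$. But as written, with the monotonicity claim load-bearing and the patch invalid, the induction has a genuine gap.
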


\begin{proof}
If $0<\T <   \frac{|\Om|}{\chi ||\rho_0||_{L^1}}$, from (i) in Proposition \ref{CS_theorem}, we admit a minimizer of \eqref{argmin} for the initial datum $\rho_0$, say $\rho_{1/2}$. Here, we need the condition on $\T$ because of the constraints in \eqref{argmin}. 
Also, by Lemma \ref{existence_FR}, we have $\rho_1$, which is a minimizer of  \eqref{argmin2} for the initial datum $\rho_{1/2}$. 
Due to \eqref{rho_half_expression}, we have
$$
\int_\Om \rho_0 = \int_\Om \rho_{1/2} \geq  \int_\Om \rho_1 +\int_\Om \T \rho_1 F'(\rho_1) \geq \int_\Om \rho_1 + \T \Big(\bt \int_\Om \rho_1^r - \al \int_\Om \rho_1 \Big).
$$
Using H\"{o}lder's inequality, we have 
\begin{equation}\label{L1control1}
\int_\Om \rho_{0} \geq  
\int_\Om \rho_1 +\T \Big( \bt k_0^{-r} \big(\int_\Om \rho_1 \big)^r - \al \int_\Om \rho_1 \Big) 
\end{equation}
with $k_0 = |\Om|^{1/r'}$, where $r'$ is the H\"{o}lder conjugate of $r$. For $\ep>0$ small enough, applying Young's inequality, we have
\begin{equation}\label{L1control2}
\int_\Om \rho_1 \leq \ep \Big(\int_\Om \rho_1 \Big)^r + C_\ep,
\end{equation}
where $C_\ep :=(r\ep)^{\frac{-1}{r-1}}/r'$.  Combining \eqref{L1control1} and \eqref{L1control2}, we obtain
$$
||\rho_0||_{L^1(\Om)} \geq ||\rho_1||_{L^1(\Om)} +\T A_\ep ||\rho_1||_{L^1(\Om)}  - \T B_\ep,
$$
where $A_\ep=\bt k_0^{-r} \ep^{-1} -\al$ and $B_\ep= \bt k_0^{-r} \ep^{-1} C_\ep$. Then, 
\begin{equation}\label{L^1_first_estimate}
||\rho_1||_{L^1(\Om)} \leq
\frac{||\rho_0||_{L^1(\Om)} + \T B_\ep }{1+ \T A_\ep} = \Big(||\rho_0||_{L^1(\Om)} -\frac{B_\ep}{A_\ep} \Big)\frac{1}{1+A_\ep \T} + \frac{B_\ep}{A_\ep}.
\end{equation}
Fix $\ep>0$ small so that $A_\ep$ is positive, and we set
$$
\xi := \max \big( ||\rho_0||_{L^1(\Om)},  \frac{B_\ep}{A_\ep}  \big)
\quad \textrm{and} \quad
\bar{\T}:= \frac{|\Om|}{\xi \chi}.
$$
Since $A_\ep>0$, from \eqref{L^1_first_estimate}, 
we know that if $0<\T <\bar{\T} ~(\leq \frac{|\Om|}{\chi ||\rho_0||_{L^1(\Om)}})$ then 
\begin{equation}\label{L^1_bounded!}
||\rho_1||_{L^1(\Om)} \leq \xi.
\end{equation}

Using Lemma \ref{essential_first_estimate}, we know $\rho_1 \in L^\infty(\Om)$. Moreover, a simple perturbation argument shows $||\rho_1||_{L^1}$ cannot be zero (suppose $\rho_1=0$ a.e., and define $\rho_1^{\ep}=\ep$ on $\Om$, and use the minimality of $\rho_1$). 
So we can repeat the above argument to obtain \eqref{L^1_first_estimate} for $\rho_2$. More precisely, using again (i) in Proposition \ref{CS_theorem} and Lemma \ref{existence_FR}, we see that
for $0< \T < \frac{|\Om|}{\chi} \min(||\rho_0||_{L^1}^{-1}, ||\rho_1||_{L^1}^{-1})$, $\rho_{3/2}$ and $\rho_2$ exist,  and
$$
||\rho_2||_{L^1(\Om)} \leq
\frac{||\rho_1||_{L^1(\Om)} + \T B_{\ep} }{1+ \T A_{\ep}}.
$$
Solving this recurrence relation, we get
$$
||\rho_2||_{L^1(\Om)} \leq
(||\rho_0||_{L^1(\Om)} -\frac{B_\ep}{A_\ep}) \big(  \frac{1}{1+A_\ep \T} \big)^2 + \frac{B_\ep}{A_\ep}.
$$
Similar to \eqref{L^1_bounded!}, if $0<\T  < \bar{\T} ~\big(\leq \frac{|\Om|}{\chi} \min(||\rho_0||_{L^1}^{-1}, ||\rho_1||_{L^1}^{-1})  \big)$, then 
$$
||\rho_2||_{L^1(\Om)} \leq \xi.
$$
Moreover, we know that $\rho_2 \in L^\infty(\Om)$ by  Lemma \ref{essential_first_estimate}. 
We obtain the desired result by induction.
\end{proof}

\vspace{3mm}
We turn our attention to getting $L^\infty$-estimates of the minimizers $\rho_\half$ and $\rho_\one$.
To this end, the following lemma plays a key role.

\begin{lemma}\label{calculus}
Let $\rho_0 \in L^\infty(\Om)$ and let $M>0$ be a parameter. Suppose that $0<\chi < \chi_*$, where $\chi_*$ is the number stated in \eqref{chi_star}, and let $\lam>1$ with $\lam  \chi < \chi_*$.
Define two functions of $\T$, namely
$$
\theta_1(\T):=(1+\T M)(||\rho_0||_{L^\infty}^{-1} - \lam \chi \T) \quad and \quad \theta_2(\T) := (1+\T M)(\eta_{_M}^{-1} - \lam \chi \T)
$$ 
which are defined on $[0,\infty)$.
Then, we can find $M$, namely $M_*$, so that $\theta_i'(0)>0$, $i=1,2$. As a consequence, there exists $\T_*>0$ such that  $\theta_i(t) \geq \theta_i(0)$, $i=1,2$, whenever $t\in (0,\T_*)$.

\end{lemma}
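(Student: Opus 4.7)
The plan is a direct derivative calculation at $\T=0$ followed by a case analysis according to the four branches in the definition \eqref{chi_star} of $\chi_*$. Differentiating,
\begin{equation*}
\theta_1'(0) = \frac{M}{\|\rho_0\|_{L^\infty}} - \lam\chi, \qquad \theta_2'(0) = \frac{M}{\eta_{_M}} - \lam\chi = M\left(\frac{\bt}{\al+M}\right)^{\frac{1}{r-1}} - \lam\chi.
\end{equation*}
The task reduces to producing a single $M = M_* > 0$ with $M_* > \lam\chi\|\rho_0\|_{L^\infty}$ and $g(M_*) > \lam\chi$, where I abbreviate $g(M) := M(\bt/(\al+M))^{1/(r-1)}$.

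For $r > 2$ one has $\chi_* = \infty$, and $g(M) \sim \bt^{1/(r-1)} M^{(r-2)/(r-1)} \to \infty$, so any sufficiently large $M_*$ works. For $r = 2$, $\chi_* = \bt$ and $g$ is monotone increasing with $g(M) \nearrow \bt > \lam\chi$, so again a large $M_*$ suffices. For $1<r<2$ the function $g$ is no longer monotone; a routine derivative calculation shows $g$ is unimodal with a unique interior maximum at $M_0 := \al(r-1)/(2-r)$, and after a short algebraic simplification one finds
\[
g(M_0) \;=\; \al^{\frac{2-r}{1-r}}\bt^{\frac{1}{r-1}}(2-r)^{\frac{2-r}{r-1}}(r-1),
\]
which is precisely the small-$\|\rho_0\|_{L^\infty}$ value of $\chi_*$. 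In the sub-case $\|\rho_0\|_{L^\infty} > (\al/(\bt(2-r)))^{1/(r-1)}$ I would choose $M_* := \bt\|\rho_0\|_{L^\infty}^{r-1}-\al$ (positive by assumption), so that $\eta_{_{M_*}} = \|\rho_0\|_{L^\infty}$ and $M_*/\|\rho_0\|_{L^\infty} = g(M_*) = \chi_* > \lam\chi$; both derivative conditions hold.

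The delicate sub-case is $\|\rho_0\|_{L^\infty} \leq (\al/(\bt(2-r)))^{1/(r-1)}$, where I take $M_* := M_0$. The second inequality $g(M_*) = \chi_* > \lam\chi$ is immediate, but the first needs some care because $M_0$ is independent of $\|\rho_0\|_{L^\infty}$. I expect this to be the main obstacle, and the key is the algebraic identity
\[
\frac{M_0}{\chi_*} \;=\; \Big(\frac{\al}{\bt(2-r)}\Big)^{\frac{1}{r-1}},
\]
obtained by substituting the closed form of $\chi_*$ and simplifying the exponents of $\al$ and $(2-r)$. Combined with the hypothesis $\|\rho_0\|_{L^\infty} \leq (\al/(\bt(2-r)))^{1/(r-1)}$ and the strict inequality $\lam\chi < \chi_*$, this yields $M_* = \chi_* \cdot (\al/(\bt(2-r)))^{1/(r-1)} \geq \chi_*\|\rho_0\|_{L^\infty} > \lam\chi\|\rho_0\|_{L^\infty}$, as required.

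Having secured $\theta_i'(0) > 0$ for $i = 1,2$, each $\theta_i$ is a quadratic polynomial in $\T$, so $\theta_i'$ is continuous and remains positive on some interval $[0, \T_*)$. The mean value theorem then gives $\theta_i(\T) \geq \theta_i(0)$ on $(0, \T_*)$, completing the argument.
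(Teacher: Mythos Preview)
Your proof is correct and follows essentially the same route as the paper: the same derivative computation, the same case analysis in $r$, and the same choices of $M_*$ in each case (large $M$ for $r\geq 2$, $M_*=\bt\|\rho_0\|_{L^\infty}^{r-1}-\al$ in the large-$\|\rho_0\|$ subcase, and $M_*=\al(r-1)/(2-r)$ in the small-$\|\rho_0\|$ subcase). The only cosmetic difference is that the paper packages the final subcase via the equivalent inequality $\|\rho_0\|_{L^\infty}^{-1}\geq f(M_f)/M_f$ (with $f(M)=M\eta_M^{-1}$), whereas you spell out the algebraic identity $M_0/\chi_*=(\al/(\bt(2-r)))^{1/(r-1)}$ explicitly; these are the same computation.
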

\begin{remark}
In the proof of Lemma \ref{calculus}, the value $\chi_*$ in \eqref{chi_star} is exactly determined. In fact,
$$
\chi_*=\sup_{M>0} \Big( \min( M \eta_M^{-1}, M ||\rho_0||_{L^\infty}^{-1} ) \Big).
$$

\noindent
We also note that
 $M_*$ may be chosen as follows:
\begin{equation*}
M_*
:=
\begin{cases}
 1+   \inf \cS  & \text{if $r \geq 2$,} \\
 \bt ||\rho_0||_{L^\infty}^{r-1}  - \al  & \text{if $1<r<2$ \quad and \quad $||\rho_0||_{L^\infty} > \big( \frac{\al}{\bt(2-r)} \big)^{\frac{1}{r-1}} $,} \\
  
\frac{\al(r-1)}{2-r}  & \text{if $1<r<2$ \quad and \quad $||\rho_0||_{L^\infty} \leq \big( \frac{\al}{\bt(2-r)} \big)^{\frac{1}{r-1}}   $},
     \end{cases}
    \end{equation*}
where $\cS:= \{M>0 : M ||\rho_0||_{\infty}^{-1} - \lam \chi>0 \textrm{~and~} M\eta_M^{-1}- \lam \chi>0 \}.$
\end{remark}

\begin{proof}
We want to find $M>0$ so that 
\begin{equation}\label{possible.}
\theta_1'(0)=M||\rho_0||_\infty^{-1} - \lam \chi>0 \quad 
\end{equation} 
and
\begin{equation}\label{possible!}
\theta_2'(0)=M \eta_{_M}^{-1} - \lam \chi >0.
\end{equation} 
It is straightforward to choose $M$ so that \eqref{possible.} holds by choosing $M$ large enough. 
Note that \eqref{possible!} is equivalent to
$$
\frac{M \bt^{\frac{1}{r-1}} }{\big( \al + M \big)^{\frac{1}{r-1} }}:=f(M) > \lam \chi.
$$

To find $M$ satisfying \eqref{possible!}, we consider three cases of the range of $r$:

(Case $r>2$)  
Notice that $f(M) \ra \infty$ as $M \ra \infty$ because $r>2$. 
Hence, it is always possible to find $M_*$ satisfying \eqref{possible.} and \eqref{possible!}.

(Case $r=2$) Let us first consider the case $\al>0$. By our assumption in this case, we have
$$
\lam \chi < \bt = \chi_*.
$$ 
Since $f(M) \uparrow \bt$ as $M \ra \infty$, there exists a large $M$ such that $\lam \chi < f(M)$, and hence
we can find $M_*$ satisfying \eqref{possible.} and \eqref{possible!}.
The case  $\al=0$ follows directly, because $f(M) \equiv \bt$.

(Case $1<r<2$)  Unlike the above two cases, since $f(M) \ra 0$ as $M \ra \infty$, we avoid growing $M$. 
Recalling the definition of $\chi_*$, in the case that 
\begin{equation}\label{critical_region1}
||\rho_0||_{L^\infty(\Om)}^{-1} <  \bigg( \frac{\al}{\bt(2-r)} \bigg)^{\frac{1}{1-r}},
\end{equation}
we know
\begin{equation}\label{chi_star_determined}
\lam \chi < \chi_*=||\rho_0||_{L^\infty}^{-1} (\bt ||\rho_0||_{L^\infty}^{r-1}  - \al).
\end{equation}
If we set $M_*:=\bt ||\rho_0||_{L^\infty}^{r-1}  - \al$, then $M_* ||\rho_0||_{\infty}^{-1} = M_* \eta_{M_*}^{-1} = \chi_*$, and therefore \eqref{possible.} and \eqref{possible!} hold due to 
\eqref{chi_star_determined}; Such a choice of $M_*$ is optimal in the sense that the range of $\chi$ is maximized.
Indeed, from \eqref{possible.} and \eqref{possible!}, $\chi_*$ is determined as follows: 
\begin{equation}\label{maxmin_fg}
\chi_* = \max_{M>0} \Big( \min( f(M),g(M)) \Big),
\end{equation}
where $g(M):= M||\rho_0||_{L^\infty}^{-1}$. 
Notice that \eqref{critical_region1} is equivalent to 
$$
g'(0)<\frac{f(M_f)}{M_f},
$$
where $M_f:=\frac{\al(r-1)}{2-r}$ is a unique maximizer of $f$.
Since $M_*=\bt ||\rho_0||_{L^\infty}^{r-1}  - \al$ is a unique positive solution of the equation $f=g$ and it is a maximizer of the function $\min(f,g)$, the chosen $M_*$ maximizes the range of $\chi$.

Let us consider the other case 
\begin{equation}\label{critical_region2}
||\rho_0||_{L^\infty(\Om)}^{-1} \geq \al^{\frac{1}{1-r} } \bt^{\frac{1}{r-1}} (2-r)^{\frac{1}{r-1}}.
\end{equation}
By our assumption,
\begin{equation}\label{chi_star_determined2}
\lam \chi < \chi_*=\al^{\frac{2-r}{1-r}} \bt^{\frac{1}{r-1}}(2-r)^{\frac{2-r}{r-1}} (r-1).
\end{equation}
If we set $M_*:=M_f=\frac{\al(r-1)}{2-r}$, \eqref{possible.} and \eqref{possible!} are satisfied due to  \eqref{chi_star_determined2}; To be more specific, \eqref{critical_region2} is equivalent to $g'(0) \geq f(M_f)/M_f$. In this case, the maximizer of the function $\min(f,g)$ is always consistent with the maximizer of $f$.
Taking into account \eqref{maxmin_fg} again, we conclude that such a choice of $M_*$ maximizes the range of $\chi$.

Thus, we found $M_*$ fulfilling \eqref{possible.} and \eqref{possible!} in all three cases. Therefore, there is $\T_1^*>0$ such that $\theta_1(\T) \geq \theta_1(0)=||\rho_0||_{\infty}^{-1}$ whenever $\T \in (0, \T_1^*)$.  Similarly, there exists $\T_2^*>0$ such that if $\T \in (0, \T_2^*)$ then $\theta_2(\T) \geq \theta_2(0)=\eta^{-1}$.  
We denote $\T_* = \min(\T_1^*,\T_2^*)$.   This completes the proof.
\end{proof}

\vspace{3mm}
Let us introduce the following hypothesis:
$$
\textbf{(H)} 	\quad \inf_{s>0} s U''(s)>0 \quad \textrm{and} \quad 0< c_* \leq \rho_0 \in C^{0,a}(\OM).
$$
These conditions satisfy the assumptions on $U$ and $g$ (regarded as $\rho_0$) in (iii) of Proposition \ref{CS_theorem}.
Under the hypothesis \textbf{(H)}, we prove the following proposition, which ensures that the minimizers $\rho_\half$ and $\rho_\one$ are uniformly bounded. Note that we temporarily need such a technical assumption \textbf{(H)}, but it will be removed later (see \textit{Step 3} in the proof of Theorem \ref{main}).

\begin{proposition}\label{surprise}
Suppose that Assumption \ref{assumptionsUF}  and \emph{\textbf{(H)}} hold, and assume the same hypotheses as in Lemma \ref{calculus}. Let $\lam, M_*, \eta_{M_*}$, and  $\T_*$ be the numbers in Lemma \ref{calculus},
and  
let $\xi$ be the number given in Lemma \ref{L^1_bound}. 
Also, $c_0$ denotes the number given in (iii) of Proposition \ref{CS_theorem}, corresponding to $g=\rho_0$. 
Suppose that $\rho_\half$ and $\rho_\one$ are minimizers in \eqref{xrightarrow}.
We write $\eta= \eta_{_{M_*}}$ and denote
\begin{equation*}
\widehat{\T}:= \min( \frac{1}{2\al}, c_0 ||\rho_0||_{L^\infty}^{-1}, \T_*, \eta^{-1}c_0, \frac{|\Om|}{\chi \xi}),
\quad
\widetilde{\T}:= \min \Big( (2\lam \chi ||\rho_0||_{L^\infty})^{-1},(2\lam \chi \eta)^{-1} \Big).
\end{equation*}	
 If $0<\T < \min(\widehat{\T},\widetilde{\T})$, then
\begin{equation}\label{uniform_estimate3}
 \max(|| \rho_{k}^\T||_{L^\infty},   \frac{1}{2}  || \rho_{k+1/2}^\T||_{L^\infty}) \leq \max(\eta, ||\rho_0||_{L^\infty}) \quad \textrm{for all } k \in \N \cup\{0\}.
\end{equation}
\end{proposition}

\begin{proof}
By Lemma \ref{L^1_bound}, $\rho_\half$ and $\rho_\one$ exist for all $n \in \N \cup \{0\}$.
Recall the functions $\theta_i$, $i=1,2$, in Lemma \ref{calculus}.

Due to the assumption $\textbf{(H)}$, the hypotheses of (iii) in Proposition \ref{CS_theorem} are satisfied. Then we have
$$
||\rho_{1/2}||_{\infty} \leq \frac{||\rho_0||}{1-\lam \T \chi ||\rho_0||_\infty}.
$$
Moreover, by Lemma \ref{essential_first_estimate}, we have
\begin{align}\label{half_half_then_one}
||\rho_1||_\infty 
&\leq
\max(
   \frac{||\rho_{1/2}||_{\infty}}{1+\T M}, \eta)
\leq 
\max(\frac{||\rho_0||_{\infty}}{(1+\T M)(1- \lam \T \chi ||\rho_0||_{\infty})} , \eta)
\nn \\
&
= \max(\frac{1}{\theta_1(\T)} ,\eta)
\leq \max(\frac{1}{\theta_1(0)}, \eta)=
\max(||\rho_0||_\infty, \eta).
\end{align}

From \eqref{half_half_then_one}, we know $\T ||\rho_1|| < c_0$. Hence, thanks to Lemma \ref{preserved_supports}, we can apply 
(iii) of Proposition \ref{CS_theorem} again, that is,
$$
||\rho_{3/2}||_{\infty} \leq \frac{||\rho_1||_{\infty}}{1- \lam \T \chi ||\rho_1||_{\infty}},
$$
where $\rho_{3/2}$ is any minimizer determined by \eqref{argmin}.
Using Lemma \ref{essential_first_estimate} again, as in the proof of \eqref{half_half_then_one}, we obtain
$$
||\rho_2||_{\infty} \leq \max(||\rho_0||_\infty , \eta).
$$
Therefore, the following holds by induction:
\begin{equation}\label{needed_uniform_estimate}
|| \rho_{k}^\T||_{L^\infty}   \leq \max(\eta, ||\rho_0||_{L^\infty}) \quad \textrm{for all } k \in \N \cup\{0\}.
\end{equation}

It remains to show
\begin{equation*}
|| \rho_{k+1/2}^\T||_{L^\infty}   \leq 2 \max(\eta, ||\rho_0||_{L^\infty}) \quad \textrm{for all } k \in \N \cup\{0\},
\end{equation*}
which is a direct consequence of \eqref{CS_estimate} and \eqref{needed_uniform_estimate}. Indeed, since $\T< \widehat{\T}$ and \eqref{needed_uniform_estimate} holds, we have $\T ||\rho_n||_{\infty} \leq c_0$. Using (iii) in Proposition \ref{CS_theorem} and \eqref{needed_uniform_estimate}, we see that
$$
||\rho_{n+1/2}||_{\infty}^{-1} \geq ||\rho_n||_{\infty}^{-1} - \lam \T \chi \geq \min (\eta^{-1} , ||\rho_0||_{\infty}^{-1}) - \lam \T \chi \geq \frac{1}{2} \min(\eta^{-1}, ||\rho_0||_{\infty}^{-1}),
$$
where we used the assumption $\T < \widetilde{\T}$ in the last inequality.
Thus, we obtain the desired result \eqref{uniform_estimate3}.
\end{proof}

\vspace{3mm}

In the following lemma, we record the $L^\infty$ distance estimate between
$\rho^\tau_{n+1/2}$ and $\rho^\tau_{n+1}$, namely the two minimizers before and
after the Fisher-Rao step, together with the energy gap estimates and the
$\WFR$ distance estimates used later.

\begin{lemma}
\label{stepping_stones}
Suppose that the assumptions in Proposition \ref{surprise} hold, and we denote $C_1:=2\max( \eta_{M^*}, ||\rho_0||_{L^\infty(\Om)})$.
There exists $\T_{**}>0$ such that
if $\T < \T_{**}$, then 
\begin{itemize}
\item[\emph{(i)}]
for all $t >0$
\begin{equation}\label{two_solutions_gap}
||  \tilde{\rho}^\T(t) - \rho^\T(t)  ||_{L^{\infty}(\Om)} \leq C \T,
\end{equation}
where $\tilde{\rho}^\T(t)$ and $\rho^\T(t)$ are defined in \eqref{two_curves_definition}.

\item[\emph{(ii)}] 
for all $n\in \N \cup \{0\}$ 
\begin{equation}\label{halfstep_energy_gap}
\cE_{1}(\rho_{n+1}^\T)  - \cE_{1}(\rho_{n+1/2}^\T)
\leq  C \T,
\end{equation}
where $C=C(C_1, \al, \Om, \chi, K_3)>0$ with the number $K_3$ in \eqref{elliptic_estimate}.

\item[\emph{(iii)}]
for all $n\in \N \cup \{0\}$
\begin{equation}\label{halfstep_energy_gap_2}
\cE_2(\rho_\half) - \cE_2 (\rho_\one) \leq 
C \T,
\end{equation}
where $C=C(C_1, \Om)$ is a positive constant.

\item[\emph{(iv)}]
Let $0<T<\infty.$ For all $0 \leq s \leq t \leq T$
\begin{equation}\label{WFR_continuity}
    \emph{WFR}^2(\tilde{\rho}^\T (s) , \tilde{\rho}^\T(t)) \leq C ( t-s+\T), \quad \emph{WFR}^2(\rho^\T (s) , \rho^\T(t)) \leq C ( t-s+\T),
\end{equation}
where $C=C(C_1, \al, \Om, \chi, K_3, T)>0$. 

\end{itemize}
\end{lemma}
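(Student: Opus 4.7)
The lemma has four parts that build on each other; the overall strategy is to first extract pointwise information from the Fisher-Rao Euler-Lagrange identity (valid thanks to Lemma \ref{preserved_supports}(ii)), then convert it to energy-gap estimates via convexity of $U$ and $F$ and elliptic regularity, and finally obtain the $\WFR$ H\"older estimate by a Cauchy-Schwarz/telescoping argument built on the JKO minimality inequalities. The uniform bounds $||\rho_n^\T||_{\infty} \leq C_1/2$ and $||\rho_{n+1/2}^\T||_{\infty} \leq C_1$ from Proposition \ref{surprise} will be used freely.

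For part (i), I would start from the pointwise identity \eqref{rho_half_expression}, namely $\rho_{n+1/2}^\T - \rho_{n+1}^\T = \T \rho_{n+1}^\T F'(\rho_{n+1}^\T) + \tfrac{\T^2}{4}\rho_{n+1}^\T (F'(\rho_{n+1}^\T))^2$. Using $\rho_{n+1}^\T \leq C_1/2$ and $|F'(\rho_{n+1}^\T)| \leq \sup_{[0,C_1]}|F'|$, and choosing $\T_{**}$ small enough to dominate the quadratic-in-$\T$ contribution, one obtains $||\tilde\rho^\T(t) - \rho^\T(t)||_{L^\infty} \leq \T C_2$. For part (ii), I would split $\cE_1(\rho_{n+1}^\T) - \cE_1(\rho_{n+1/2}^\T)$ into the $U$-part and the elliptic part. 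The elliptic part is handled by the linearity $c_{n+1}-c_{n+1/2}=c[\rho_{n+1}-\rho_{n+1/2}]$: factoring $a^2-b^2=(a+b)(a-b)$ and combining $||c_{n+1}||_{C^{1,\gam}}+||c_{n+1/2}||_{C^{1,\gam}} \leq 2K_3 C_1$ with $||c_{n+1}-c_{n+1/2}||_{C^{1,\gam}}\leq K_3\T C_2$ (by (i) and \eqref{elliptic_estimate}) produces the $C_4\T$ contribution. The $U$-part uses convexity, $U(\rho_{n+1})-U(\rho_{n+1/2})\leq U'(\rho_{n+1})(\rho_{n+1}-\rho_{n+1/2})$; inserting the pointwise formula and decomposing $F'(s)=(F'(s)+\al)-\al$ (with $F'+\al\geq 0$) reduces the integrand to $\al\rho_{n+1}U'(\rho_{n+1})$ (bounded by monotonicity of $U'$ via $\al C_1 U'(2C_1)$) and $\rho_{n+1} U'(\rho_{n+1})(F'+\al)$ (controlled through the boundedness of $sU'(s)$ on $[0,C_1]$ supplied by (H3), producing the $\inf sU'(s)$ contribution in $C_3$).

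Part (iii) proceeds in parallel: convexity of $F$ gives $F(\rho_{n+1/2})-F(\rho_{n+1})\leq F'(\rho_{n+1/2})(\rho_{n+1/2}-\rho_{n+1})$; after inserting \eqref{rho_half_expression} and splitting $F'(s)=(F'(s)+\al)-\al$ once more, the $\al^2C_1$ piece arises directly while the cross term is bounded by $\sup|F'|\cdot C_1 \cdot U'(2C_1)$ after combining with the $U$-convexity estimate used in (ii). For part (iv), when $s\in(n_1\T,(n_1+1)\T]$ and $t\in(n_2\T,(n_2+1)\T]$, I would insert the intermediate half-integer indices between $\rho_{n_1+1/2}$ and $\rho_{n_2+1/2}$, apply the triangle inequality to produce $2(n_2-n_1)$ successive arcs, then use $\WFR\leq W_2$ on Wasserstein arcs (same-mass case) and $\WFR\leq \FR$ on Fisher-Rao arcs by \eqref{WFR_leq_FR}, together with the JKO minimality bounds $W_2^2(\rho_k,\rho_{k+1/2})\leq \T(\cE_1(\rho_k)-\cE_1(\rho_{k+1/2}))$ and $\FR^2(\rho_{k-1/2},\rho_k)\leq 2\T(\cE_2(\rho_{k-1/2})-\cE_2(\rho_k))$. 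Cauchy-Schwarz converts the sum of lengths into $2(n_2-n_1)$ times the sum of squared lengths; the $\cE_1$ telescoping sum costs only $(n_2-n_1)(C_3+C_4)\T$ thanks to (ii), while the $\cE_2$ sum is bounded termwise by (iii). The linear-in-$(t-s+\T)$ conclusion then follows from $(n_2-n_1)\T\leq t-s+\T\leq T+\T$; the argument for $\rho^\T$ is identical.

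The main obstacle lies in part (ii): the sign of $U'$ may change and $U'$ can blow up at $0$, so neither positivity of $U'$ nor of $-U'$ may be invoked to discard terms. Assumption (H3) is what rescues the argument, because it ensures $sU'(s)$ extends continuously to $0$ and is hence uniformly bounded on $[0,C_1]$, allowing the $\rho F'(\rho)U'(\rho)$-type integrands to be estimated pointwise on the full domain. A secondary subtlety in (iv) is preserving linear (rather than quadratic) dependence on $t-s$: the naively quadratic factor $(n_2-n_1)^2\T^2$ must be tamed by absorbing one copy of $(n_2-n_1)\T \leq T+\T$ into the constant $C_6$.
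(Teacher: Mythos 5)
Your overall route is sound and genuinely different from the paper's in parts (i)--(iii). The paper never invokes convexity of $U$ or $F$ directly: instead it splits $\Om$ into the sublevel and superlevel sets of the minimizer $r$ (resp.\ $r'$) of $U$ (resp.\ $F$), exploits the monotonicity of each function on each piece together with the two-sided pinch $(1-2k_0\T)\rho_\half \leq \rho_\one \leq (1+2\al\T)\rho_\half$ coming from the squared Euler--Lagrange identity, and applies a mean-value argument on each piece. Your version skips the domain splitting entirely and instead writes $U(\rho_\one)-U(\rho_\half) \leq U'(\rho_\one)(\rho_\one - \rho_\half)$ and $F(\rho_\half)-F(\rho_\one) \leq F'(\rho_\half)(\rho_\half - \rho_\one)$ by convexity, then substitutes the pointwise formula \eqref{rho_half_expression}. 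This is a cleaner calculation and, crucially, the terms that appear are all of the form $\rho U'(\rho) \cdot (\text{bounded})$, so the uniform bound of $s U'(s)$ on $[0,C_1]$ furnished by (H3) and continuity does exactly the work you identify. The constants that fall out of your route differ from the paper's explicit $C_3$, $C_5$, but both give a valid $O(\T)$ bound, and the remark following the lemma makes clear the constants are only tracked for transparency. Your part (iv) is essentially identical to the paper's telescoping + Cauchy--Schwarz + \eqref{pytagorean} argument; just note that you should carry along the telescoped term $\sup_{\rho\leq C_1}\cE_1 - \inf_{\rho\leq C_1}\cE_1$ (which the paper collects in $C_6$) in addition to the $(C_3+C_4)$ corrections.

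One genuine misstep: in part (iii) you explain the appearance of $U'(2C_1)$ in $C_5$ as coming from ``combining with the $U$-convexity estimate used in (ii).'' That cannot be right. The quantity $\cE_2(\rho_\half) - \cE_2(\rho_\one) = \int_\Om F(\rho_\half) - F(\rho_\one)$ involves only $F$; there is no route by which $U$ or its derivative should enter. The paper's own proof of (iii) in fact produces the factor $F'(2C_1)$, not $U'(2C_1)$, and the $U'(2C_1)$ in the lemma's displayed $C_5$ is almost certainly a typo. Trying to rationalize it by importing the part-(ii) estimate signals a confusion about what $\cE_2$ depends on. With your convexity argument, the natural constant is of order $\sup_{[0,C_1]}|F'|^2 \cdot C_1 \cdot |\Om|$ plus lower-order corrections in $\T$; write that, and drop any reference to $U$ in the $\cE_2$ bound.
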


\begin{proof} 
(i) Fix $t \in ( n\T ,(n+1)\T]$. Using (ii) of Lemma \ref{preserved_supports} and \eqref{Gallouet_pointwise}, we obtain
\begin{equation}\label{FR_optimality}
\sqrt{\rho_\one} - \sqrt{\rho_\half} = -\frac{\T}{2} \sqrt{\rho_\one}  F'(\rho_\one) \quad  \cL^d \textrm{-a.e.}
\end{equation}
By Proposition \ref{surprise} and the continuity of $F'$, $F'(\rho_{n+1}^\T)$ is uniformly bounded in $n$. Multiplying \eqref{FR_optimality} by 
$\sqrt{\rho_\one} + \sqrt{\rho_\half}$, we deduce that 
\begin{equation}\label{half_gap_L_infty}
||\rho_\one - \rho_\half||_{L^\infty(\Om)} \leq \T C_1 \sup_{z \in [0,C_1]}  |F'(z)|.
\end{equation}

(ii) Let us first show that 
\begin{equation}\label{U_half}
\int U(\rho_\one) - \int U(\rho_\half) \leq  \big( 2\al  C_1 U'(2C_1) - 3k_0   \inf_{s>0} s U'(s) \big)|\Om| \T,
\end{equation}
where
$$
k_0:=\sup_{z \in [0,C_1]} |F'(z)|.
$$
To prove this, we note from \eqref{FR_optimality} that
$$
\sqrt{\rho_\one} - \sqrt{\rho_\half} \geq -\frac{k_0}{2}\T \sqrt{\rho_\one}   \quad  \cL^d \textrm{-a.e.}
$$
This implies that there exists $\T_{**}>0$ such that if $\T<\T_{**}$ then
\begin{equation}\label{easy_estimate}
    \rho_\one \geq (1-2k_0 \T)\rho_\half \quad \textrm{$\cL^d$-a.e.}
\end{equation}
On the other hand, using \eqref{FR_optimality} again,  similar to \eqref{Gallouet_square_a.e.}, we  have
\begin{equation}\label{easy_estimate2}
    \rho_\half - \rho_\one \geq \T \rho_\one F'(\rho_\one)  
    \geq - \al \T \rho_\one \quad \textrm{$\cL^d$-a.e.}
\end{equation}

Since we assumed \textbf{(H)}, $U$ is increasing on $(r, \infty)$ and is decreasing on $[0, r)$ (of course, $r$ can be $0$, in which case the proof is easier).
From \eqref{easy_estimate2}, there exists $\T_{**}>0$ (not relabeled) such that for all $n$, $\rho_\one \leq (1+2\al \T) \rho_\half$  whenever $\T< \T_{**}$. Then
\begin{align}
\int_{ \{\rho_\one \geq r \}} U(\rho_\one) 
&\leq \int_{\{\rho_\one \geq r\}} U((1+2\al \T)\rho_\half)
\nn \\
&  \leq  \int_{ \{\rho_\one \geq r \}} U(\rho_\half) + 2\al \T \rho_\half  \sup_{z \in [0,2\al]}U' \big((1+ z \T) \rho_\half \big)  \nn \\
&    \leq  \int_{ \{\rho_\one \geq r \}} U(\rho_\half) + 2\al \T C_1 U'(2C_1),     \label{U_increasing_part}
\end{align}
where we used Proposition \ref{surprise} and the fact that $U'$ is continuous. To estimate $\int_{ \{\rho_\one < r \}} U(\rho_\one)$, we note that $\inf_{s>0} s U'(s) \leq 0$ due to the definition of $r$ (if $r=0$, the following estimate is not needed). 
Using \eqref{easy_estimate} and assuming $6 k_0\T<1$, still denoted by $\T< \T_{**}$, we have
\begin{align}
\int_{ \{\rho_\one < r \}} U(\rho_\one)  
&\leq \int_{ \{\rho_\one < r \}} U((1-2k_0 \T)\rho_\half) 
\nn \\
&  \leq  \int_{ \{\rho_\one < r \}} U(\rho_\half) -2k_0 \T \rho_\half U'((1-2k_0 \T) \rho_\half)
  \nn \\
&\leq \int_{ \{\rho_\one < r \}} U(\rho_\half) - \frac{2k_0 \T}{1-2k_0 \T} \inf_{s \in [0,C_1] } s U'(s)
  \nn \\
  &\leq \int_{ \{\rho_\one < r \}} U(\rho_\half) - 3k_0 \T  \inf_{s \in [0,C_1]} s U'(s).
  \nn 
\end{align}
Then we obtain \eqref{U_half}.

We next show that
$$
\bigg| \int_\Om |\grad c[\rho_\one]|^2 +\big(c[\rho_\one])^2 dx
 - \int_\Om |\grad c[\rho_\half]|^2 +\big(c[\rho_\half])^2 dx \bigg|
$$
\begin{equation}\label{half_gap_goal}
 \leq 4K_3^2   C_1^2 \sup_{z \in [0,C_1]} |F'(z)|    |\Om|\T,
\end{equation}
from which the desired result \eqref{halfstep_energy_gap} follows.
Let us denote $\bar{\rho}_n := \rho_\one - \rho_\half$ and $\bar{c}_n:= c[\rho_\one] - c[\rho_\half]$. Clearly, by the linearity, $( \bar{\rho}_n, \bar{c}_n )$  solves  \eqref{elliptic}. Sobolev embedding theorems and \eqref{elliptic_estimate} imply that for large $p$,
$$
||\bar{c}_n||_{C^{1,\gam}(\OM)} \leq K_2 ||\bar{\rho}_n||_{L^p(\Om)} \leq K_3 || \bar{\rho}_n||_{L^\infty(\Om)} \leq K_3 C_1 \sup_{z \in [0,C_1]} |F'(z)|  \T,
$$
where we have used \eqref{half_gap_L_infty}.
Thus, we get
$$
\bigg|  \int_\Om \big(c[\rho_\one])^2 - \big(c[\rho_\half])^2 \bigg|
\leq \int_\Om |\bar{c}_n |\big( c[\rho_\one]+ c[\rho_\half]  \big) \leq 2K_3^2   C_1^2 \sup_{z \in [0,C_1]} |F'(z)| |\Om|  \T.
$$
Estimating similarly $\int_\Om (|\grad c[\rho_\one]|^2 - |\grad c[\rho_\half]|^2$),
we obtain \eqref{half_gap_goal}.

(iii)
Since $F''>0$, there exists $r' \geq 0$ such that $F$ is decreasing on $[0,r')$ and increasing on $(r',\infty)$ (If $r' = 0$, the argument becomes even simpler).

Similar to \eqref{U_increasing_part}, we can observe that
\begin{equation}\label{E2_half_gap1}
\int_{ \{\rho_\half \geq r' \}} F(\rho_\half) \leq 
\int_{ \{ \rho_\half \geq r' \}} F(\rho_\one) +4k_0 \T C_1 F'(2C_1).
\end{equation}
Using that $\rho_\half \geq (1-\al \T)\rho_\one$ in \eqref{implicit_form_positivity} and $F'\geq -\al$, we have
\begin{align}
\int_{ \{\rho_\half < r' \}} F(\rho_\half) &\leq 
\int_{ \{ \rho_\half < r' \}} F((1-\al \T)\rho_\one) 
\nn \\
&\leq \int_{ \{\rho_\half < r' \}} F(\rho_\one)  -\al \T \rho_\one F'((1-\al \T)\rho_\one)
\nn \\
&\leq \int_{ \{\rho_\half < r' \}} F(\rho_\one)  +\al^2 \T C_1.
\label{E2_half_gap2}
\end{align}
The desired result is obtained by combining \eqref{E2_half_gap1} and \eqref{E2_half_gap2}.

(iv) Let $0 \leq s \leq t \leq T$. Obviously, $s \in \big( N_1 \T, (N_1+1)\T \big]$ and $t \in \big( N_2 \T, (N_2+1)\T \big]$ for some $N_1,N_2 \leq \lfloor \frac{T}{\T}  \rfloor$.
$$
\WFR(\tilde{\rho}^\T (s), \tilde{\rho}^\T (t))=\WFR(\rho_{N_1+1/2}^\T, \rho_{N_2+1/2}^\T) 
$$
\begin{equation}\label{WFR_conti_1}
\leq \sum_{k=N_1}^{N_2-1} \WFR(\rho_{k+1/2}^\T, \rho_{k+3/2}^\T) \leq (\sum_{k=N_1}^{N_2-1}  \T)^{1/2} \bigg(\sum_{k=N_1}^{N_2-1}   \frac{\WFR^2(\rho_{k+1/2}^\T, \rho_{k+3/2}^\T)}{\T} \bigg)^{1/2}. 
\end{equation}
From \eqref{pytagorean}, \eqref{argmin}, and \eqref{argmin2}, we see that
$$
\WFR^2(\rho_{k+1/2}^\T, \rho_{k+3/2}^\T) \leq
2 \Big(\FR^2 (\rho_{k+1/2}^\T, \rho_{k+1}^\T) + W_2^2(\rho_{k+1}^\T,\rho_{k+3/2}^\T) \Big),
$$
\begin{equation}\label{from_1st_scheme}
\frac{W_2^2(\rho_{k+1}^\T,\rho_{k+3/2}^\T)}{\T} \leq \cE_1( \rho_{k+1}^\T) - \cE_1(\rho_{k+3/2}^\T),
\end{equation}
$$
\frac{\FR^2(\rho_{k+1/2}^\T,\rho_{k+1}^\T)}{2\T} \leq \cE_2( \rho_{k+1/2}^\T) - \cE_2(\rho_{k+1}^\T). 
$$

\noindent
Therefore,
\begin{align}
&
\sum_{k=N_1}^{N_2-1}   \frac{\WFR^2(\rho_{k+1/2}^\T, \rho_{k+3/2}^\T)}{2\T}
\leq
\sum_{k=N_1}^{N_2-1} 
\frac {\FR^2 (\rho_{k+1/2}^\T, \rho_{k+1}^\T) + W_2^2(\rho_{k+1}^\T,\rho_{k+3/2}^\T) }{\T}
\nn \\
&\leq 
\sum_{k=N_1}^{N_2-1}
 \cE_1( \rho_{k+1}^\T) - \cE_1(\rho_{k+3/2}^\T)  +2 \big( \cE_2( \rho_{k+1/2}^\T) - \cE_2(\rho_{k+1}^\T) \big)
\nn \\
&=
 \cE_1( \rho_{N_1+1}^\T) + \Big(- \cE_1(\rho_{N_1+3/2}^\T) + \cE_1( \rho_{N_1+2}^\T) \Big) 
\nn \\
& +... + \Big(-\cE_1( \rho_{N_2-1/2}^\T)  + \cE_1(\rho_{N_2}^\T) \Big) -\cE_1(\rho_{N_2+1/2}^\T) 
+
2 \sum_{k=N_1}^{N_2-1} \big( \cE_2( \rho_{k+1/2}^\T) - \cE_2(\rho_{k+1}^\T)\big).
 \nn
\end{align}
Thanks to \eqref{halfstep_energy_gap} and \eqref{halfstep_energy_gap_2}, we obtain
\begin{align}
&\sum_{k=N_1}^{N_2-1}   \frac{\WFR^2(\rho_{k+1/2}^\T, \rho_{k+3/2}^\T)}{2\T}
\nn \\
&
\leq   \sup_{\rho \leq C_1} \cE_1(\rho) + C(N_2-N_1-1)\T  - \inf_{\rho \leq C_1} \cE_1(\rho)    + C (N_2-N_1)\T
 \nn \\
&
\leq   \sup_{\rho \leq C_1} \cE_1(\rho) + CT  - \inf_{\rho \leq C_1} \cE_1(\rho)
 + C T
 \label{WFR_conti_2}
\end{align}
for some constant $C>0$. Here, the supremum and infimum appearing in \eqref{WFR_conti_2} are finite by 
 Proposition \ref{surprise}.

Combining \eqref{WFR_conti_1} and \eqref{WFR_conti_2}, we get the first inequality in
\eqref{WFR_continuity}. The second one follows in a similar way.
\end{proof}

\vspace{3mm}
Now, we obtain an approximate weak formulation of \eqref{KSL} by using the curves $\tilde{\rho}^\T(t), \rho^\T(t)$.

\begin{lemma}\label{combining_ideas}
Let $T>0$.
Suppose that the assumptions in Proposition \ref{surprise} hold and that $\T< \T_{**}$ as stated in Lemma \ref{stepping_stones}. 
Let $\Psi(\rho):= \rho U'(\rho) -U(\rho)$. For all $\phi \in C^\infty(\OM)$,
we have
$$
\int_\Om \big(\rho^\T(t_2) - \rho^\T(t_1)  \big) \phi
+\mathscr{R}(\T)
=-\int_{t_1}^{t_2} \int_\Om \frac{\sqrt{\rho^\T}(\sqrt{\rho^\T} + \sqrt{\tilde{\rho}^\T } )}{2} F'(\rho^\T) \phi
$$
\begin{equation}\label{approximate_weak_solution}
-\int_{t_1}^{t_2} \int_\Om \grad \phi \cdot \grad \Psi(\tilde{\rho}^\T)
+\int_{t_1}^{t_2} \int_\Om \chi \tilde{\rho}^\T \grad c[\tilde{\rho}^\T] \cdot \grad \phi
\qquad \forall 0 \leq t_1 < t_2 \leq T,
\end{equation}
where the remainder term $\mathscr{R}(\T)$ tends to $0$ as $\T \downarrow 0$.
\end{lemma}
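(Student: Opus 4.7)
The plan is to derive a discrete weak formulation at each elementary JKO half-step, telescope it across $[t_1,t_2]$, and absorb the approximation errors into the single remainder $\mathscr{R}(\T)$. For the Wasserstein half-step $\rho_n^\T \to \rho_{n+1/2}^\T$, Proposition \ref{surprise} gives $\|\rho_{n+1/2}^\T\|_\infty \leq C_1$, so once $\T < 1/(C_1\chi)$ the density constraint $\mu \leq 1/(\T\chi)$ is slack and the multiplier $p$ in \eqref{Santambrogio!} vanishes by the complementary slackness condition $(\overline{M}-\rho)p=0$. Differentiating \eqref{Santambrogio!} then yields
$$
\grad\vphi_n = \T\bigl(\chi\grad c[\rho_{n+1/2}^\T] - \grad U'(\rho_{n+1/2}^\T)\bigr) \qquad \cL^d\text{-a.e. in }\Om,
$$
where $\vphi_n \in \cK(\rho_{n+1/2}^\T,\rho_n^\T)$. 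Proposition \ref{CS_theorem}(ii) combined with Lemma \ref{preserved_supports} makes $\rho_{n+1/2}^\T$ Lipschitz with $\inf_\Om\rho_{n+1/2}^\T>0$, so $T_n := \mathrm{id} - \grad\vphi_n$ is the optimal transport map with $(T_n)_\#\rho_{n+1/2}^\T = \rho_n^\T$ and $W_2^2(\rho_n^\T,\rho_{n+1/2}^\T) = \int_\Om|\grad\vphi_n|^2\rho_{n+1/2}^\T\,dx$. For $\phi\in C^\infty(\OM)$, the push-forward identity together with a second-order Taylor expansion gives
$$
\int_\Om(\rho_{n+1/2}^\T - \rho_n^\T)\phi\,dx = \int_\Om(\phi - \phi\circ T_n)\rho_{n+1/2}^\T\,dx = \int_\Om\grad\phi\cdot\grad\vphi_n\,\rho_{n+1/2}^\T\,dx + R_n,
$$
with $|R_n|\leq \tfrac{1}{2}\|D^2\phi\|_\infty\,W_2^2(\rho_n^\T,\rho_{n+1/2}^\T)$. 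Substituting the expression for $\grad\vphi_n$ and using the pressure identity $\rho\grad U'(\rho)=\grad\Psi(\rho)$ produces the per-step Wasserstein formula
$$
\int_\Om(\rho_{n+1/2}^\T - \rho_n^\T)\phi = \T\!\int_\Om\!\chi\rho_{n+1/2}^\T\grad c[\rho_{n+1/2}^\T]\cdot\grad\phi - \T\!\int_\Om\!\grad\phi\cdot\grad\Psi(\rho_{n+1/2}^\T) + R_n.
$$

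The Fisher-Rao half-step is handled directly by \eqref{Gallouet!} applied to $\phi$, giving without any Taylor error
$$
\int_\Om(\rho_{n+1}^\T - \rho_{n+1/2}^\T)\phi = -\frac{\T}{2}\int_\Om F'(\rho_{n+1}^\T)\sqrt{\rho_{n+1}^\T}\bigl(\sqrt{\rho_{n+1}^\T}+\sqrt{\rho_{n+1/2}^\T}\bigr)\phi.
$$
Writing $t_1 \in (N_1\T,(N_1+1)\T]$ and $t_2 \in (N_2\T,(N_2+1)\T]$, I add the two identities and sum over $n=N_1+1,\ldots,N_2$. The left-hand side telescopes exactly to $\int_\Om(\rho^\T(t_2)-\rho^\T(t_1))\phi$, while the right-hand sides reassemble as the time integrals of the three integrands on the right-hand side of \eqref{approximate_weak_solution}, evaluated on $[(N_1+1)\T,(N_2+1)\T]$; replacing this integration range by $[t_1,t_2]$ costs two boundary corrections of size $O(\T)$, easily estimated via the uniform $L^\infty$ bound from Proposition \ref{surprise} and \eqref{elliptic_estimate}.

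The main obstacle is the cumulative Taylor error $\sum_n R_n$. The first JKO minimality gives $W_2^2(\rho_n^\T,\rho_{n+1/2}^\T)/\T \leq \cE_1(\rho_n^\T) - \cE_1(\rho_{n+1/2}^\T)$, but the sum does not telescope cleanly because it is interrupted by the Fisher-Rao substeps, on which $\cE_1$ is not a priori monotone. This is precisely the gap controlled by \eqref{halfstep_energy_gap} in Lemma \ref{stepping_stones}: $\cE_1(\rho_{n+1}^\T) - \cE_1(\rho_{n+1/2}^\T) \leq (C_3+C_4)\T$. Chaining both inequalities yields
$$
\sum_{n=N_1+1}^{N_2} W_2^2(\rho_n^\T,\rho_{n+1/2}^\T) \leq \T\bigl(\sup_{\rho\leq C_1}\cE_1 - \inf_{\rho\leq C_1}\cE_1 + T(C_3+C_4)\bigr),
$$
so $|\sum_n R_n| \leq \tfrac{1}{2}\|D^2\phi\|_\infty \cdot O(\T) = O(\T)$. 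Collecting every $O(\T)$ contribution (boundary corrections and Taylor errors) into $\mathscr{R}(\T)$ gives \eqref{approximate_weak_solution} together with $\mathscr{R}(\T)\to 0$ as $\T\downarrow 0$, completing the proof.
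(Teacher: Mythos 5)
Your proof matches the paper's argument essentially line for line: eliminate the multiplier $p$ in \eqref{Santambrogio!} using Proposition \ref{surprise} so the constraint is slack, differentiate the Euler-Lagrange identity to express $\grad\vphi_n$ in terms of $\grad U'$ and $\grad c$, use the push-forward of $\rho_{n+1/2}^\T$ under $T_n$ with a second-order Taylor expansion to get the discrete Wasserstein identity, invoke \eqref{Gallouet!} for the Fisher-Rao half-step, telescope, and control the accumulated $W_2^2$ Taylor errors by the JKO dissipation inequality \eqref{from_1st_scheme} chained with the energy-gap bound \eqref{halfstep_energy_gap}. The only cosmetic differences are your slightly tighter constant $\tfrac12\|D^2\phi\|_\infty$ in the Taylor remainder and a more explicit accounting of the endpoint corrections from replacing $[(N_1+1)\T,(N_2+1)\T]$ by $[t_1,t_2]$, both of which are correct and consistent with the paper.
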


\begin{proof}
First, we recall the Euler-Lagrange equation \eqref{Santambrogio!}.
Using Proposition \ref{surprise}, $\rho_\half$ is always smaller than $\overline{B}:=\frac{1}{\chi \T}$ whenever $\T$ is small enough, which means $p \equiv 0$ in \eqref{Santambrogio!}. Hence, we get
\begin{equation}\label{load_EL1}
U'(\rho_\half) -\chi c[\rho_\half] + \frac{\vphi}{\T} = l \quad \textrm{for all $x \in \OM$}.
\end{equation}
Since all functions in \eqref{load_EL1} are Lipschitz, 
\begin{equation}\label{OC1}
\grad (U'(\rho_\half )) - \chi \grad c[\rho_\half] + \frac{id-T_n}{\T} =0 \quad a.e. ~x \in \Om,
\end{equation}
where $T_n$ is an optimal transport map from $\rho_\half$ to $\rho_n^\T$. Since $\rho_n^\T=(T_n)_\# \rho_\half$, for all $\phi \in C^\infty(\OM)$ we have
$$
\int \phi  \rho_\half - \int \phi  \rho_n^\T = \int \phi \rho_\half  - \int \phi \circ T_n \rho_\half
$$
$$
=-\int  ( \phi(T_n(x)) - \phi(x) ) \rho_\half = -\T \int \grad \phi(x) \cdot \frac{T_n(x)-x}{\T} \rho_\half + E_n(\T)
$$
where $|E_n(\T)| \leq \sup |D^2 \phi|W_2^2(\rho_n^\T, \rho_\half)$. Then
$$
\int \phi  \rho_\half - \int \phi  \rho_n^\T 
= -\T \int \grad \phi(x) \cdot (  \grad U'(\rho_\half)- \chi \grad c )\rho_\half+ E_n(\T)
$$
\begin{equation}\label{making_WS}
= -\T \int \grad \phi(x)  \cdot \grad \Psi(\rho_\half)
+ \T \int  \chi \rho_\half \grad c \cdot \grad \phi 
+E_n(\T),
\end{equation}
where in the last equality we have used that $\rho \grad U'(\rho) = \grad (\rho U'(\rho) - U(\rho)) = \grad \Psi(\rho)$.

Finally, 
we fix $0 \leq t_1 < t_2 \leq T$.
We will first show that \eqref{approximate_weak_solution} holds for the case $t_1>0$. 
Let $t_1>0$, and then obviously we have $t_1 \in \big( (n-1)\T, n\T \big]$ and $t_2 \in \big( (m-1)\T, m\T \big]$ for some $m,n \in \N$.
Since
$$
\int_\Om \phi \big(\rho^\T(t_2) - \rho^\T(t_1) \big) 
=\int_\Om \phi \big(\rho_{m}^\T - \rho_ {n}^\T  \big)
$$
$$
=\sum_{k=n}^{m-1} \int_\Om \phi  \big(\rho_{k+1}^\T - \rho_ {k}^\T  \big)
=\sum_{k=n}^{m-1} \int_\Om \phi  \big(\rho_{k+1}^\T - \rho_{k+1/2}^\T) 
+\phi \big(\rho_{k+1/2}^\T 
- \rho_ {k}^\T  \big),
$$
by using Proposition \ref{surprise}, \eqref{Gallouet!}, and \eqref{making_WS}, we obtain
$$
\int_\Om \phi  \rho^\T(t_2) - \int_\Om \phi  \rho^\T(t_1)
=-\int_{n \T}^{m \T} \int_\Om  \frac{\sqrt{\rho^\T}(\sqrt{\rho^\T} + \sqrt{\tilde{\rho}^\T } )}{2}   F'(\rho^\T)     \phi    
$$
\begin{equation}\label{semi_approximate_weak_solution}
-\int_{n \T}^{m \T} \int_\Om \grad \phi
\cdot \grad \Psi(\tilde{\rho}^\T)
+\int_{n \T}^{m \T} \int_\Om \chi \tilde{\rho}^\T \grad c[\tilde{\rho}^\T] \cdot \grad \phi
+ E(\T)
\qquad \forall  \phi \in C^\infty(\OM),
\end{equation}
where  $|E(\T) | \leq \sum_{k=n}^{m-1} \sup |D^2 \phi|W_2^2(\rho_k^\T, \rho_{k+1/2}^\T)$.  
Using \eqref{halfstep_energy_gap} and \eqref{from_1st_scheme}
(as in \eqref{WFR_conti_2}) we have
$$
|E(\T)| \leq \T ||D^2 \phi||_{L^\infty} \big[CT + \sup_{\rho \leq C_1} \cE_1(\rho) - \inf_{\rho \leq C_1} \cE_1(\rho) \big] 
$$ 
with the constant $C_1$ given in Lemma \ref{stepping_stones}.
If $t_1 = 0$, we set $\rho^\tau(0) = \rho_0$ and take $n = 0$ in the above argument. 
The rest of the proof remains valid.

Let $\mathscr{R}(\T)$ be the subtraction of the right hand side in \eqref{semi_approximate_weak_solution} from the right hand side in \eqref{approximate_weak_solution}.
Thanks to Proposition \ref{surprise}, since
$\sqrt{\rho^\T}(\sqrt{\rho^\T} + \sqrt{\tilde{\rho}^\T } ) F'(\rho^\T), ~
\tilde{\rho}^\T U'(\tilde{\rho}^\T) - U(\tilde{\rho}^\T)$, and  $\tilde{\rho}^\T \grad c[\tilde{\rho}^\T]$ belong to  $L^1((0,T) \times \Om))$,  we see that
$\mathscr{R}(\T) \ra 0$ as $\T \ra 0$.
\end{proof}

\vspace{5mm}
\section{Proof of Theorem \ref{main}}\label{section_proofs}
We prove Theorem \ref{main} sequentially. More precisely, we will first prove the theorem with the additional assumption \textbf{(H)}. Finally, we will show that the assumption \textbf{(H)} can be removed, and finally, Theorem \ref{main} is proved.

\textit{Proof of Theorem \ref{main}}:
Fix $T>0$. 
Let $\T$ be small enough to satisfy the assumptions of Lemma \ref{combining_ideas}. Let $(\Tk)$ be a vanishing sequence that satisfies such a smallness. It is important to note that we are assuming \textbf{(H)} in \textit{Step 1} and \textit{Step 2}.

\vspace{3mm}
\textit{Step 1. $\frac{1}{2}$-Hölder continuity w.r.t. \emph{$\WFR$} metric} :
To prove this step, we follow the main lines of \cite[Corollary 4.1]{GM17} (in our case, thanks to Proposition \ref{surprise}, we do not need to estimate the masses by the $\WFR$ distance as before; moreover, our limit measure belongs to $L^\infty(\Om)$). Clearly, by Proposition \ref{surprise}, we have
\begin{equation}\label{alltime_boundedness}
||\rho^{\Tk}(t)||_{L^\infty(\Om)} \leq C_1, \qquad ||\tilde{\rho}^{\Tk} (t)||_{L^\infty(\Om)} \leq C_1     \qquad \forall t \in [0,T],
\end{equation}
where $C_1$ is the constant defined in Lemma \ref{stepping_stones}. 
 
Using \eqref{alltime_boundedness}, we know that $ \rho^{\Tk}(t), ~\tilde{\rho}^{\Tk} (t)$ belong to the set $\{ \si \in \cM^+(\OM) : |\si| \leq |\Om| C_1 \}$, which is relatively compact in $\cM^+(\OM)$. Since \eqref{WFR_continuity} holds, thanks to a refined version of the Ascoli-Arzel\`{a} theorem \cite[Proposition 3.3.1]{AGS08}, we extract a subsequence $(\Tk)$ (not relabeled) such that for all $t \in [0,T]$
$$
    \rho^{\Tk}(t)  \wsra \nu(t) \quad \textrm{in $C_c(\OM)^*=C(\OM)^*$ as $k \ra \infty$}, 
$$
which means 
$$
\int_{\OM} \rho^{\Tk}(t) \phi dx \ra \int_{\OM} \phi d\nu(t) \qquad  \forall \phi \in C(\OM).
$$
Thanks to \eqref{alltime_boundedness}, we conclude that there exists a curve 
$\rho:[0,T] \ra L^\infty(\Om)$
such that 
$$
\nu(t)=\rho(t) \cL^d|_{\OM}  \qquad \forall t \in [0,T].
$$

Repeating this process to $\tilde{\rho}^{\Tk}$, up to a subsequence, we have
$$
\int_{\OM} \tilde{\rho}^{\Tk}(t) \phi dx \ra \int_{\OM} \rho^*(t) \phi dx \qquad  \forall \phi \in C(\OM),
$$
for a curve $\rho^*:[0,T] \ra L^\infty(\Om)$.

On the other hand, from \eqref{WFR_leq_FR} and (iii) of Lemma \ref{stepping_stones}, we note $\WFR(\rho^{\Tk} (t), \tilde{\rho}^{\Tk} (t) )\leq \sqrt{2C} \Tk.$ As mentioned in Remark \ref{WFR_metric_remark},
since $\WFR$ is lower semicontinuous with respect to weak* convergence, we obtain $\WFR(\rho(t), \rho^*(t)) \leq \liminf_{k \ra \infty} \WFR(\rho^{\Tk} (t), \tilde{\rho}^{\Tk} (t) )\leq 0$, which gives $\rho= \rho^*$.
Therefore, for all $t \in [0,T]$ and all $\phi \in C(\OM)$,
\begin{equation}\label{same_limit}
\int_{\OM} \rho^{\Tk}(t) \phi dx, ~ \int_{\OM} \tilde{\rho}^{\Tk}(t) \phi dx 
\ra \int_{\OM} \rho(t)\phi  dx.
\end{equation}
Recalling \eqref{WFR_continuity} and using again
the lower semicontinuity of $\WFR$, we get
\begin{equation}\label{real_WFR_continuity}
\WFR(\rho (t), \rho(s)) \leq C |t-s|^{1/2} \qquad \forall s,t \in [0,T].
\end{equation}
This means that $\rho \in C^{1/2}([0,T]; \cM_{\WFR}^+(\Om))$.

\vspace{3mm}
\textit{Step 2. Existence of a weak solution \emph{\&} \eqref{The_estimate}} :
We wish to pass to the limit in \eqref{approximate_weak_solution} as $\T \downarrow 0$.
The limit on the left hand side follows directly from \eqref{same_limit}, i.e.,
\begin{equation}\label{LHS_conv}
 \int_\Om (\rho^{\Tk}(t_2) -\rho^{\Tk}(t_1)) \phi \ra   \int_\Om (\rho(t_2) -\rho(t_1)) \phi   \qquad   \forall \phi \in C^\infty(\OM).
\end{equation}

From \eqref{OC1}, we know that
\begin{equation}\label{take_grad_OC1}
\grad (U'(\rho_\halfk )) - \chi \grad c[\rho_\halfk] = \frac{T_n-id}{\Tk}  \quad a.e.
\end{equation}
where $T_n$ is an optimal transport map from $\rho_\halfk$ to $\rho_\placek$. Combining \eqref{take_grad_OC1} with the following inequality
$$
\frac{W_2^2(\rho_\placek, \rho_\halfk) }{\Tk} 
\leq \cE_{1}(\rho_\placek) - \cE_{1}(\rho_\halfk),
$$
we obtain
\begin{equation}\label{grad_control1}
\Tk  \int_\Om | \grad (U'(\rho_\halfk )) - \chi \grad c[\rho_\halfk] |^2 \rho_\half dx 
\leq \cE_{1}(\rho_\placek) - \cE_{1}(\rho_\halfk).
\end{equation}
Using \eqref{halfstep_energy_gap} and \eqref{grad_control1}, we have
\begin{equation}\label{previous_sum_up}
\Tk \int_\Om | \grad (U'(\rho_\halfk )) - \chi \grad c[\rho_\halfk] |^2 \rho_\halfk dx 
\leq \cE_{1}(\rho_\placek) - \cE_{1}(\rho_\onek) + C \Tk
\end{equation}
for some $C=C(C_1, \al, \Om, \chi, K_3)>0$.
Adding up \eqref{previous_sum_up} from $n=0$ to $n=N_k$ (with $N_k:=\lfloor \frac{T}{\Tk} \rfloor)$,  we see that
$$
    \int_0^T  || \grad U'( \tilde{\rho}^{\Tk}(t) ) - \chi \grad c[ \tilde{\rho}^{\Tk}(t)] ||_{L^2(\tilde{\rho}^{\Tk}(t)dx)}^2 dt 
\leq \cE_1(\rho_0) - \inf_{\rho \leq C_1} \cE_1(\rho)  + C(N_k+1) \Tk 
$$
\begin{equation}\label{a_priori_estimate}
\leq \cE_1(\rho_0) - \inf_{\rho \leq C_1} \cE_1(\rho) + C(T+1),
\end{equation}
where we used Proposition \ref{surprise} to get the uniform boundedness of $\cE_{1}$.

Applying Young's inequality to \eqref{a_priori_estimate}, we obtain
$$
\int_{\Om \times (0,T)} 
\tilde{\rho}^{\Tk} | \grad U'(\tilde{\rho}^{\Tk})|^2 
 \leq
\cE_1(\rho_0) - \inf_{\rho \leq C_1} \cE_1(\rho) + C(T+1)
 +\ep \int_{\Om \times (0,T)} \tilde{\rho}^{\Tk} | \grad U'(\tilde{\rho}^{\Tk})|^2  
$$
\begin{equation}\label{Young's}
+ (\frac{\chi^2}{\ep} - \chi^2) \int_{\Om \times (0,T)} |\grad c[ \tilde{\rho}^{\Tk}]|^2 \tilde{\rho}^{\Tk}.
\end{equation}
On the other hand, using \eqref{alltime_boundedness} and \eqref{elliptic_estimate}, similar to \eqref{half_gap_goal}, we get
\begin{equation}\label{grad_c}
\int_{\Om \times (0,T)} |\grad c [ \tilde{\rho}^{\Tk}]|^2 \tilde{\rho}^{\Tk} \leq K_3^2 C_1^3 \big|\Om \times (0,T) \big|.
\end{equation}
Combining \eqref{Young's} and \eqref{grad_c} and choosing $\ep=1/2$, we see that
\begin{equation}\label{W1,1}
  \int_{\Om \times (0,T)} \tilde{\rho}^{\Tk} | \grad U'(\tilde{\rho}^{\Tk})|^2
  \leq C=C(C_1, \al, \Om, \chi, K_3, T),
\end{equation}
which gives
\begin{equation}\label{H^1}
  \int_{\Om \times (0,T)} |\grad \Psi(\tilde{\rho}^{\Tk}) |^2=  \int_{\Om \times (0,T)} |\tilde{\rho}^{\Tk} \grad U'(\tilde{\rho}^{\Tk})|^2
  \leq C=C(C_1, \al, \Om, \chi, K_3, T).
\end{equation}

Now, in order to prove the following claim, we use a variant of the Aubin-Lions lemma \cite{RS03} introduced in Proposition \ref{ALRS}.

\textbf{Claim:}
There exists a subsequence $(\T_k)$ (not relabeled) such that for a.e. $t \in (0,T)$ and for all $ p \in (1, \infty)$,
\begin{equation}\label{strongLp}
    \tilde{\rho}^{\Tk}(t) \ra \rho^{**}(t) \quad \textrm{strongly in} \quad L^p(\Om),
\end{equation}
\begin{equation}\label{StrongLp}
    \tilde{\rho}^{\Tk} \ra \rho^{**} \quad \textrm{strongly in} \quad L^p(\Om \times (0,T)).
\end{equation}

\textit{Proof of \emph{\textbf{Claim}}}:
Let $X:= L^p(\Om)$ with $p>1$.  We write $L_{C_1}^\infty(\Om):= \{ \rho \in L^\infty(\Om) \cap \cM^+(\Om): \rho \leq C_1 ~a.e. \}$. Then we set
\begin{equation}\label{AubinLions_functional}
\mathscr{F}(\rho):=
  \begin{cases}
\displaystyle{\int_\Om |\grad \Psi(\rho)|^2 } & \text{if $\grad \Psi(\rho) \in L^2(\Om)$ and   $\rho \in L_{C_1}^\infty(\Om)$},             
           \\    \infty & \text{else,}
  \end{cases}
\end{equation}
where $\Psi(\rho) := \rho U'(\rho) -U(\rho)$, and define the pseudo distance $g$ on $L^p(\Om) \times L^p(\Om)$ as follows:
$$
g(u,\tilde{u}):=
  \begin{cases}
\WFR(u,\tilde{u}) & \text{if $u,\tilde{u} \in \cM^+(\Om)$},             
           \\    \infty & \text{else.}
  \end{cases}
$$
We first show that
 sublevels of $\mathscr{F}$ are relatively compact in $X$. 
Given $L>0$, let $\mathscr{F}_{\leq L} := \{\rho \in L^p(\Om):  \mathscr{F} \leq L  \}$.  If $(\rho_n)$ is any sequence in $\mathscr{F}_{\leq L}$, then, 
$$
\int_\Om   |\grad \Psi(\rho_n)|^2 \leq L,
$$
which implies by Rellich's theorem, up to a subsequence, 
$\Psi(\rho_n) \ra \mu$ in $L^2(\Om)$ strongly. Therefore, up to a subsequence,
$$
\rho_n \ra \Psi^{-1}(\mu) \quad a.e.
$$
where we have used that $\Psi'(s)>0$ for all $s>0$.
Since $\rho_n \leq C_1$, we see that
$$
\rho_n \ra \Psi^{-1}(\mu) \quad \textrm{in} \quad L^p(\Om),
$$
which gives us the desired result.

We now show that $\mathscr{F}$ is lower semicontinuous with respect to  $L^p(\Om)$ convergence, that is,
\begin{equation}\label{F_LSC}
\liminf_{n \ra \infty} \mathscr{F}(\rho_n) \geq \mathscr{F}(\rho)
\end{equation}
whenever $\rho_n \ra \rho$ in $L^p(\Om)$. Suppose that $\rho_n \ra \rho$ in $L^p(\Om)$. To show \eqref{F_LSC}, without loss of generality, we can assume that $\liminf_{n \ra \infty} \mathscr{F}(\rho_n) < \infty$, and hence there exists a subsequence $(n')$  such that
$$
\liminf_{n \ra \infty} \mathscr{F}(\rho_n) = \lim_{n' \ra \infty} \mathscr{F}(\rho_{n'}) \quad \textrm{and} \quad \sup_{n'} \mathscr{F}(\rho_{n'}) < \infty.
$$
Due to the strong convergence of $(\rho_n)$ in $L^p(\Om)$, by passing to a subsequence, $\rho_{n'} \ra \rho$ a.e.
Using weak compactness in $L^2(\Om)$, up to a subsequence, we obtain
$$
\grad \Psi (\rho_{n'}) \wra \grad \Psi (\rho) \quad \textrm{in} \quad L^2(\Om),
$$
where we used that $\rho_{n'} \ra \rho$ a.e.

The elementary inequality $|x|^2-|y|^2 \geq 2(x-y)\cdot y$ yields
\begin{equation}\label{weak_L2_LSC}
\mathscr{F}(\rho_{n'}) - \mathscr{F}(\rho) 
\geq 2\int_\Om \grad \Psi(\rho) \cdot \big(  \grad \Psi(\rho_{n'}) - \grad \Psi(\rho) \big).
\end{equation}
Letting $n' \ra \infty$, we get \eqref{F_LSC}. 

Thus, the assumption \textit{(a)} of Proposition \ref{ALRS} is satisfied. By the definition of $\mathscr{F}$ and $g$, the assumption \textit{(b)} is also clearly fulfilled. Set $\mathscr{U}:= \{ \tilde{\rho}^{\Tk} \}$, where $(\Tk)$ is the sequence in  \eqref{W1,1}.
Then \eqref{time_H^1} holds due to \eqref{H^1}.
In addition, \eqref{AL_limsup}  follows from \eqref{WFR_continuity}.
\qed\\

We go back to the proof.
Due to \eqref{same_limit},  one can check that 
\begin{equation}\label{StrongLimit}
\rho^{**}=\rho.
\end{equation}
Combining \eqref{StrongLp} and \eqref{StrongLimit}, 
 $\tilde{\rho}^{\Tk}$ converges to $\rho$ in $L^p(\Om \times (0,T))$.
Moreover, by \eqref{two_solutions_gap},
\[
||\rho^{\tau_k}-\tilde\rho^{\tau_k}||_{L^p(\Omega\times(0,T))}
\le C |\Omega|^{1/p}T^{1/p}\tau_k\to0,
\]
which implies that $\rho^{\Tk}$ also converges  to $\rho$ in $L^p(\Om \times (0,T))$.  
 Thus, up to a subsequence, 
\begin{equation}\label{pointwise_convergence}
\rho^{\Tk}, \tilde{\rho}^{\Tk}  \quad \textrm{converge pointwise to} \quad \rho \quad \textrm{for almost every $(x,t) \in \Om \times (0,T)$}.
\end{equation}

\noindent Using \eqref{H^1}, weak compactness, and \eqref{pointwise_convergence}  we have for all $\phi \in C^\infty(\OM)$
$$
\int_{t_1}^{t_2} \int_\Om 
\grad \phi \cdot \grad U'(\tilde{\rho}^{\Tk} (t))\tilde{\rho}^{\Tk}(t)  ~dxdt
=\int_{t_1}^{t_2} \int_\Om 
\grad \phi \cdot  \grad \Psi(\tilde{\rho}^{\Tk}(t)) ~dxdt
$$
\begin{equation}\label{RHS_seoncdterm_1}
\ra \int_{t_1}^{t_2} \int_\Om 
\grad \phi \cdot  \grad \Psi(\rho(t)) ~dxdt
= \int_{t_1}^{t_2} \int_\Om 
\grad \phi \cdot \grad U'(\rho(t))\rho(t)  ~dxdt.
\end{equation}

On the other hand, \eqref{strongLp}, \eqref{StrongLimit}, and \eqref{elliptic_estimate} imply that for a.e. $t\in(0,T)$,
$$
||c[\tilde{\rho}^{\Tk}(t)] - c[\rho(t)]||_{C^{1,\gam}(\Om)} \leq K_1  ||c[\tilde{\rho}^{\Tk}(t)] - c[\rho(t)]||_{W^{2,p}(\Om)}
\leq K_2 ||\tilde{\rho}^{\Tk}(t) - \rho(t)||_{L^p(\Om)} \ra 0.
$$
Hence, for a.e. $t\in(0,T)$, $\grad c[\tilde{\rho}^{\Tk}(t)]$ converges to $\grad c[\rho(t)]$ in the sup-norm, which shows that $c[\rho(t)]$ satisfies the homogeneous Neumann boundary condition, that is,
\begin{equation}\label{Neumann_holds}
\grad c[\rho(t)] \cdot \nu =0
\end{equation}
and
\begin{equation}\label{RHS_secondterm_2}
\int_{t_1}^{t_2} \int_\Om \tilde{\rho}^{\Tk}(t)
   \grad c[{\tilde{\rho}^{\Tk}}(t)] \cdot \grad \phi ~dxdt 
\ra
\int_{t_1}^{t_2} \int_\Om \rho(t) \grad c[\rho(t)] \cdot \grad \phi ~dxdt       \quad   \forall \phi \in C^\infty(\OM)
\end{equation}
by using the dominated convergence theorem.

As we have already shown \eqref{LHS_conv},
\eqref{RHS_seoncdterm_1}, and   \eqref{RHS_secondterm_2}, it remains to show the convergence of the first term in the right hand side of \eqref{approximate_weak_solution}.
From \eqref{alltime_boundedness}, \eqref{pointwise_convergence}, and dominated convergence theorem, we obtain
$$
\int_{t_1}^{t_2} \int_\Om \frac{\sqrt{\rho^{\Tk} (t)}(\sqrt{\rho^{\Tk}(t)} + \sqrt{\tilde{\rho}^{\Tk}(t) } )}{2} F'(\rho^{\Tk} (t)) \phi
\ra
\int_{t_1}^{t_2} \int_\Om \rho(t) F'(\rho(t)) \phi ~dxdt.
$$

Hence, for all $\phi \in C^\infty(\OM)$ and for all $0\leq t_1 \leq t_2 \leq T$, we have
$$
\int_\Om (\rho(t_2) - \rho(t_1) ) \phi ~dx      \qquad
$$
\begin{equation}\label{weak_solution}
= -\int_{t_1}^{t_2} \int_\Om \rho(t) \Big(  \grad U'(\rho(t) )  - \chi \grad c[\rho(t)]   \Big)\cdot \grad \phi ~dxdt -\int_{t_1}^{t_2} \int_\Om \rho(t) F'(\rho(t)) \phi ~dxdt.
\end{equation}
It is well known that \eqref{weak_solution} is equivalent to \eqref{real_weak_solution}.

Lastly, we prove \eqref{The_estimate}. Applying Fatou's lemma to \eqref{H^1}, we obtain
\begin{equation}\label{real_grad_estimate}
\int_0^T \int_{\Om} |\grad  \Psi(\rho)|^2
\leq \int_0^T \liminf_{k \ra \infty}  \int_{\Om} |\grad  \Psi(\tilde{\rho}^{\Tk})|^2
\leq
\liminf_{k \ra \infty}  \int_0^T  \int_{\Om} |\grad  \Psi(\tilde{\rho}^{\Tk})|^2
\leq  C,
\end{equation}
where we used \eqref{strongLp}, \eqref{StrongLimit}, and a lower semicontinuity of the functional $\rho \mapsto \int_\Om |\grad \Psi(\rho)|^2$ (see \eqref{weak_L2_LSC}).
Moreover, thanks to
\eqref{alltime_boundedness}, and \eqref{pointwise_convergence}, we get
\begin{equation}\label{real_alltime_boundedness}
\esssup_{\Om \times [0,T]} \rho(x,t) \leq C_1.
\end{equation}

\vspace{3mm}
\textit{Step 3. Removing the hypothesis \emph{\textbf{(H)}}} :
We show that the assumption \textbf{(H)} can be dropped.
Given $\rho_0 \in L^\infty(\Om)$ and a convex function $U$ satisfying Assumption \ref{assumptionsUF}, we approximate $\rho_0$ and $U$ with $(\rho_{0,\ep})_{\ep>0}$ and $(U_\del)_{\del>0}$ respectively. Let
$$
\rho_{0,\ep}:= (\tilde{\rho}_0 * \eta_\ep + \ep) \textbf{1}_{\OM}, \qquad U_\del(s) = U(s) + \del  s \log s,
$$
where  $\tilde{\rho}_0:=\rho_0$ in $\Om$ and it vanishes in $\R^d \backslash \Om$, and $\eta_\ep$ denotes a standard (positive) mollifier.
Thus, $(\rho_{0,\ep})$ and $(U_\del)$ satisfy \textbf{(H)}. Moreover, we note that
\begin{equation}\label{L_infty_norm_conv}
||\rho_{0,\ep}||_{L^\infty} \ra ||\rho_0||_{L^\infty} \quad \textrm{as} \quad \ep \ra 0.
\end{equation}

Therefore, we obtain a curve $\rho_\ed(t)$ which fulfills Theorem \ref{main} as shown in \textit{Step 1} and \textit{Step 2}.  
Hence, by \eqref{real_WFR_continuity}, \eqref{weak_solution}, \eqref{real_alltime_boundedness}, \eqref{real_grad_estimate}
$\rho_\ed$ satisfies

\begin{equation}\label{genuine_WFR_continuity}
\WFR(\rho_\ed(t), \rho_\ed(s)) \leq C_\ed |t-s|^{1/2},
\end{equation}
$$
\int_\Om (\rho_\ed(t_2) - \rho_\ed(t_1) ) \phi ~dx     + \int_{t_1}^{t_2} \int_\Om \grad \Psi_\del (\rho_\ed(t) )\cdot \grad \phi
 - \chi \rho_\ed(t) \grad c[\rho_\ed(t)] \cdot \grad \phi
$$
\begin{equation}\label{weak_solution_ed}
=  -\int_{t_1}^{t_2} \int_\Om \rho_\ed(t) F'(\rho_\ed(t)) \phi ~dxdt  \qquad \forall \phi \in C^\infty(\OM), \quad \forall 0\leq t_1 \leq t_2 \leq T,
\end{equation}

\begin{equation}\label{genuine_alltime_boundedness}
\esssup_{\Om \times [0,T]} \rho_\ed(x,t) \leq C_{1,\ep},
\end{equation}

\begin{equation}\label{genuine_grad_estimate}
\int_{\Om \times (0,T)} |\grad  \Psi_\del(\rho_\ed)|^2 =\int_{\Om \times (0,T)} |\rho_\ed \grad U_\del'(\rho_\ed)|^2  \leq   C_{\ep,\del},
\end{equation}
where we have used the notations $C_{1,\ep}$ and $C_\ed$, which depend on $\rho_{0,\ep}$ and $U_\del$, to distinguish them from $C_1$ and $C$.
Letting $\ed$ be sufficiently small, we can assume that $C_\ed \leq C$ for some constant $C>0$. In addition, $C_{1,\ep} \ra C_1$ as $\ep \downarrow 0$.

With an abuse of notation, we do not use sequence notations for $\ed$ like $\ep_k, \del_j$.

All we need to do is to repeat the proofs of \textit{Step 1} and \textit{Step 2} for $\rho_\ed$. The main differences are described below.

First, we note that
$\grad \Psi_\del (\rho_\ed) = \grad \Psi(\rho_\ed) + \del \grad \rho_\ed$ and $\Psi' \geq0$.
Then, from \eqref{genuine_grad_estimate}, we have
\begin{equation}\label{trick_estimate}
\int_{\Om \times (0,T)} |\grad  \Psi(\rho_\ed)|^2 
\leq
\int_{\Om \times (0,T)} |\grad  \Psi_\del(\rho_\ed)|^2 
 \leq   C.
\end{equation}

Once we have \eqref{genuine_WFR_continuity}, \eqref{genuine_alltime_boundedness}, and \eqref{trick_estimate}, the same argument as in \textit{Step 1} and \textit{Step 2} yields the following: $\rho_\ed(t)$ weakly* converges to a curve $\vrho(t): [0,T] \ra  \cM^+(\Om) \cap  L^\infty(\Om)$ satisfying 
\begin{equation*}
\WFR(\vrho(t), \vrho(s)) \leq C  |t-s|^{1/2}.
\end{equation*}
Moreover, the weak formulation in \eqref{weak_solution_ed} holds for $\vrho(t)$ instead of $\rho_\ed$, and
$$
\esssup_{\Om \times [0,T]} \vrho  (x,t) \leq C_1,
\qquad
\int_{\Om \times (0,T) } |\grad \Psi(\vrho)|^2 
=\int_{\Om \times (0,T) } |\vrho \grad U'(\vrho)|^2  
\leq C.
$$
This completes the proof.

\vspace{5mm}
\section{Appendix}\label{section_appendix}
\subsection{Proofs of Proposition \ref{CS_theorem}}
(i) Let $(\rho_n)_{n \in \N}$ be a minimizing sequence of the scheme \eqref{argmin}, and let $(c[\rho_n])_{n \in \N}$ be a sequence of the solutions in \eqref{elliptic} corresponding to $\rho_n$. Since $\rho_n \leq \overline{B}$, for any $p>1$ there exists
$\rho \in L^p(\Om)$ such that up to a subsequence
$\rho_n \wra \rho$ weakly in $L^p(\Om)$. 
Also, the functional $\rho \mapsto \int_\Om U(\rho) dx$ is lower semicontinuous with respect to such a weak convergence, since $\rho_n$ is uniformly bounded by $\overline{B}$
(see e.g. \cite[Proposition 7.7]{S15}).
On the other hand,  we know from \eqref{elliptic_estimate} that $||c[\rho_n]||_{W^{2,p}}$ is uniformly bounded in $n$. Using weak compactness in $W^{2,p}$ and the Rellich-Kondrachov theorem, up to a subsequence, we see that $\sup_\Om \big( |c[\rho_n] - c[\rho]| + |\grad c[\rho_n] - \grad c[\rho]| \big) \ra 0$ as $n \ra \infty$. Thus, $\int_\Om \Lam|c[\rho_n]|^2 + |\grad c[\rho_n]|^2 \ra \int_\Om \Lam|c[\rho]|^2 + |\grad c[\rho]|^2 $.
Since $\Om$ is bounded and $\rho_n$ narrowly converges to $\rho$, we have $W_2^2(g, \rho_n) \ra W_2^2(g, \rho)$.
Therefore, we conclude that
$$
\inf_{\mu \in \cS} \bigg(\cE_1(\mu) + \frac{W_2^2(g, \mu)}{\T} \bigg) = \liminf_{n \ra \infty} \bigg(\cE_1(\rho_n) + \frac{W_2^2(g, \rho_n)}{\T} \bigg) \geq \cE_1(\rho) + \frac{W_2^2(g, \rho)}{\T},
$$
which means that $\rho$ is a minimizer.\\

(ii)
Now we investigate an Euler-Lagrange equation for the minimization problem \eqref{argmin}:

Let $\rho$ be a minimizer obtained from \eqref{argmin}. 
Let $\tilde{\rho}$ be a nonnegative measure so that  $\tilde{\rho} \leq \overline{B}:=\frac{1}{\T \chi } $ and $|\tilde{\rho}|=|\rho|$, and  define $\rho_\ep:=(1-\ep) \rho + \ep \tilde{\rho}$ with $0<\ep <1/2$. From the minimality of $\rho$, we have
\begin{equation}\label{Appendix_first_variation1}
\cE_{1}(\rho) + \frac{W_2^2(\rho, g)}{\T} 
\leq 
\cE_{1}(\rho_\ep) + \frac{W_2^2(\rho_\ep, g)}{\T}.
\end{equation}
Recall that $\rho \leq \overline{B}$ and the function $sU'(s)$ is bounded from below on $(0,\infty)$ due to the assumptions on $U$. 
Following the arguments in \cite[Lemma 8.6]{S15}, we see that $U'(\rho) \in L^1(\Om)$. In addition, we note that 
$\big|
(\tilde{\rho}-\rho) U'((1-\ep)\rho + \ep \tilde{\rho})\big| \leq 2\overline{B} \big( |U'(\rho)| + |U'(\overline{B})| \big) \in L^1(\Om)$. 
Letting $\ep \downarrow 0$ and using the dominated convergence theorem, we get
\begin{equation}\label{Appendix_first_variation2}
\frac{\int U(\rho_\ep) - \int U(\rho)}{\ep} \ra
\int U'(\rho)(\tilde{\rho} - \rho).
\end{equation}
Moreover, from \eqref{elliptic}, we obtain
\begin{equation}\label{Appendix_first_variation3}
\frac{\int  \big( \Lam |c[\rho_\ep]|^2 + |\grad c[\rho_\ep]|^2 \big) 
- \int  \big( \Lam |c[\rho]|^2 + |\grad c[\rho]|^2 \big) }{\ep} 
\ra
\int 2 c[\rho ](\tilde{\rho} - \rho),
\end{equation}
where we used the fact that $c[\rho_\ep] = c[\rho] + \ep c[\tilde{\rho} -\rho]$ and $\int_\Om c[\tilde{\rho}] \rho = \int_\Om c[\rho] \tilde{\rho}$.

Let $\vphi \in \cK(\rho ,g)$ and $\vphi_\ep \in \cK(\rho_\ep ,g)$. We also note that
$$
W_2^2 (\rho_\ep,g)=\int_\Om \vphi_\ep d\rho_\ep + \int_\Om \vphi_\ep^c dg, \qquad
W_2^2(\rho,g) = \int_\Om \vphi d\rho + \int_\Om \vphi^c dg \geq \int_\Om \vphi_\ep d\rho + \int_\Om \vphi_\ep^c dg.
$$
Following the arguments in \cite[Lemma 3.4]{BS05}, we see that $\vphi_\ep$ converges uniformly, up to a subsequence, to $\vphi$ as $\ep \ra 0$ (see also \cite[Proposition 2.4]{KMW23}). Therefore, we have
\begin{equation}\label{Appendix_first_variation4}
\liminf_{\ep \ra 0}\frac{W_2^2(\rho_\ep, g) - W_2^2(\rho, g) }{\ep} 
\leq \liminf_{\ep \ra 0} \int_\Om \vphi_\ep d (\tilde{\rho}- \rho) = \int_\Om \vphi  d (\tilde{\rho}- \rho).
\end{equation}

Combining \eqref{Appendix_first_variation1}-\eqref{Appendix_first_variation4}, 
we obtain
$$
\int_\Om U'(\rho)\rho - \chi \int_\Om c [\rho] \rho + \frac{1}{\T}\int_\Om \varphi \rho 
\leq
\int_\Om U'(\rho) \tilde{\rho} - \chi \int_\Om c[\rho] \tilde{\rho} + \frac{1}{\T}\int_\Om \varphi \tilde{\rho}.
$$

Let $h:=U'(\rho) - \chi c[ \rho] + \frac{\varphi}{\T}$. Since
$\int h \rho \leq \int h \tilde{\rho}$ for all admissible $\tilde{\rho}$,
there exists $l \in \R$ such that

$$
 \begin{cases}
\rho=\overline{B} & \text{a.e. in $\{h<l \} $,}  \\     
\rho \leq \overline{B}      & \text{a.e. in $\{ h=l \}$,} \\
\rho=0 & \text{a.e. in $\{ h>l \}$.}
  \end{cases}
$$
More precisely, we can write 
$$
l:= \sup_{a \in \R} \{ \overline{B} \cL^d( \{ h<a \} )  \leq |\rho| \}.
$$

Similar to \cite[Lemma 8.6]{S15}, we see that supp$(\rho) = \Om$. Hence, $h \leq l$ a.e. So we have 
\begin{equation}\nonumber
U'(\rho)=
  \begin{cases}
U'(\overline{B}) & \text{a.e. in $\{ h<l \}$,}  \\           
   l+\chi c - \vphi/ \T       & \text{a.e. in $\{h=l\}$.  }
  \end{cases}
\end{equation}
Recalling that $U'$ is increasing and distinguishing two cases $\rho= \overline{B}$ and $\rho<\overline{B}$, we are able to redefine (up to a negligible set)
\begin{equation}\label{U'_regularity}
U'(\rho) := \min (U'(\overline{B}), l + \chi c - \frac{\vphi}{\T}) \quad \textrm{on} \quad \Om.
\end{equation}
We know that the Kantorovich potential $\vphi$ is Lipschitz on $\OM$. Moreover,  $c$ is also Lipschitz because $c \in W^{2,q}(\Om)$ for large $q$. Since 
$U'(\overline{B})$ and  $l + \chi c - \frac{\vphi}{\T}$ are Lipschitz  on $\OM$,  $U'(\rho)$ is also Lipschitz by \eqref{U'_regularity}. 
If we define $p:=(l-h)^+$ with $U'(\rho)$ redefined, then $p$ is continuous and for all $x \in \OM$,
\begin{equation}\label{Appendix_EL_conclusion}
    U'(\rho) - \chi c[\rho] + \frac{\vphi}{\T} +p=l, \quad p \geq0,  ~p(\overline{B} -\rho)=0.
\end{equation}

\noindent Since $U'(0+)=-\infty$, \eqref{Appendix_EL_conclusion} implies
$$
\essinf_\Om  \rho>0, 
$$
and hence $\rho$ is also Lipschitz. \\

(iii)
From the assumptions and (ii), we have $g,\rho \in C^{0,a}(\OM)$, and $g$ and $\rho$ are strictly positive and bounded. Then Caffarelli's regularity theory gives $\vphi \in C^{2, a}(\OM)$ (see also \cite[Theorem 4.14]{V03}).
Let $w:=\vphi - \T \chi c$ and $x_0 \in \argmin_{\OM} w$. Using  \eqref{Appendix_EL_conclusion}, we know that $\rho$ has its maximum at $x_0$, and hence $\rho(x_0)=||\rho||_{L^\infty}$.
We now consider two cases, $x_0 \in \textrm{int}(\Om)$ and $x_0 \in \p \Om$.

First, let $x_0$ belong to the interior of $\Om$. Since $w$ has its minimum at $x_0$, we have $\Del w(x_0) \geq0$, and hence $\Del \vphi(x_0) \geq \T \chi (c(x_0)- \rho(x_0)) \geq -\T \chi \rho(x_0)$. 
Due to the connection between the map $T$ and the Monge-Ampère equation, we have
$\rho(x_0) = g(T(x_0)) \textrm{det} (I_d - D^2 \vphi(x_0))$. Since $I_d -D^2 \vphi$ is positive definite, thanks to the arithmetic-geometric inequality, 
$\textrm{det}(I_d -D^2 \vphi(x_0)) \leq (1- \frac{\Del \vphi}{d})^d \leq (1+\frac{\T \chi \rho(x_0)}{d})^d$. Consequently, we obtain
$$
X \leq Y(1+X)^d, \quad \textrm{or} \quad Y\geq \frac{X}{(1+X)^d},
$$
where $X:=\frac{\T \chi ||\rho||_{L^\infty}}{d}$ and 
$Y:= \frac{\T \chi ||g||_{L^\infty}}{d}$.

Let $G: [0,\infty) \ra \R$ be the map defined as $G(x)=\frac{x}{(1+x)^d}$, and observe that  $G$ attains its maximum at $x=\frac{1}{d-1}$ (when $d\geq2$). Note that for given $\lam>1$, there exists a small $\del(\lam) \in (0,\frac{1}{2(d-1)})$ such that
if $x \in [0,\del(\lam)]$, then $G(x) \geq \frac{x}{1+ \lam d x}$.
We also note that $\rho \leq \overline{B}:=\frac{1}{\chi \T}$ implies $X \leq \frac{1}{d} <\frac{1}{d-1}$. Then, if $Y$ is small enough, the level set $\{ G \leq Y\}$ should be located in a small neighborhood of the origin. 
Therefore, we conclude that given $\lam>1$, if $Y < G(\del(\lam))$, i.e., $\T ||g||_{L^\infty} < c_0=c_0(\lam, \chi, d):=\frac{d \del(\lam)}{\chi (1+\del(\lam))^d}$,
then 
$$
Y \geq \frac{X}{(1+X)^d} \geq \frac{X}{1+ \lam d X},
$$
because $X \leq \del(\lam)$. Thus, we have $\frac{1}{X} + \lam d \geq \frac{1}{Y}$, which gives \eqref{CS_estimate}.

Second, we need to check the case $x_0 \in \p \Om$. Since $w$ attains its minimum at $x_0$, we deduce $\frac{\p w}{\p \nu}(x_0) \leq 0$. So we have $\frac{\p \vphi}{\p \nu}(x_0) \leq 0$ because of $\frac{\p c}{\p \nu}(x_0) = 0$. Hence, $\frac{\p \vphi}{\p \nu}(x_0)=(x_0 - T(x_0)) \cdot n \leq 0$, where $T$ is the optimal transport map from $\rho$ to $g$. On the other hand, since we assumed $\Om$ is strictly convex, we obtain $(T(x_0) - x_0) \cdot n <0$ unless $T(x_0)=x_0$. Consequently, if $x_0 \in \p \Om$, then we have a contradiction except for the case $T(x_0)=x_0$. 
Finally, we have to consider the case where $T(x_0)=x_0$. If so, we get $\frac{\p w}{\p \nu}(x_0) = 0$. Due to the minimality of $x_0$ for $w$, we know that $\Del w(x_0) \geq 0$. Then we can obtain \eqref{CS_estimate} by following the same proof for the case $x_0 \in \textrm{int}(\Om)$. 
\qed

\section*{Acknowledgments}
K. Kang was supported by NRF RS-2024-00336346 and RS-2024-00406821.
H. Kim was supported by NRF-2021R1F1A1048231.   
G. Seo was supported by the National Research Foundation of Korea (NRF) grant funded by the Korea government (MSIT) (RS-2023-00219980 and RS-2023-00212227).



\end{document}